\newtheorem{thm}{Theorem}[section]
\newtheorem*{theorem*}{Theorem}
\newtheorem*{acknowledgement*}{Acknowledgement}
\newtheorem{cor}[thm]{Corollary}
\newtheorem{claim}[thm]{Claim}
\newtheorem{lem}[thm]{Lemma}
\newtheorem{prop}[thm]{Proposition}
\newtheorem*{prop*}{Proposition}
\theoremstyle{definition}
\newtheorem{defn}[thm]{Definition}
\theoremstyle{remark}
\newtheorem{rem}[thm]{Remark}
\numberwithin{equation}{section}
\newcommand{\func}[1]{\ensuremath{\mathop{\mathrm{#1}}} }
\newcommand{\spt}[0]{\func{spt}}
\newcommand{\diam}[0]{\func{diam}}
\newcommand{\M}{\mathcal{M}}
\begin{document}

\title{Round spheres are Hausdorff stable under small perturbation of entropy}
\date{}
\author{Shengwen Wang}
\address{
Department of Mathematics\\
Johns Hopkins University\\
Baltimore, MD 21218, USA\\
Email:  swang@math.jhu.edu }

\maketitle

\begin{abstract}
We show that if the entropy of any closed hypersurface is close to that of a round hyper-sphere, then it is close to a round sphere in Hausdorff distance. Generalizing the result of \cite{BW1} to higher dimensions. \end{abstract}

\section{introduction}

Let $\Sigma^n\subset\mathbb R^{n+1}$ be a hypersurface, that is, a connected, smooth, properly embedded, codimension-1 submanifold. In \cite{CM}, Colding-Minicozzi introduced the entropy functional $\lambda(\Sigma)$ for such a hypersurface when studying generic singularities of the mean curvature flow. It's a natural geometric quantity that measures the complexity of a hypersurface, and is defined by
\begin{gather}
\lambda(\Sigma)=\sup_{(\mathbf y,\rho)\in\mathbb R^{n+1}\times\mathbb R}  F(\rho\Sigma+\mathbf y)
\end{gather}
where $F$ is the Gaussian area of $\Sigma$ defined by
\begin{gather}
F(\Sigma) = \int_\Sigma\frac{1}{(4\pi)^{\frac{n}{2}}} e^{-\frac{|x|^2}  {4}  }.
\end{gather}
By definition, entropy is invariant under dilations and rigid motions in $\mathbb R^{n+1}$.

The mean curvature flow in $\mathbb R^{n+1}$ is a one-parameter family of surfaces $\{\Sigma_t\}_{t\in I}$ that evolves over time $t$ by the equation
\begin{gather}
(\frac{\partial}{\partial t} \mathbf x)^\perp = \mathbf H_\Sigma (x)
\end{gather}
Here $\mathbf x$ is the position vector, $\perp$ means normal component of a vector, $\mathbf n$ is the unit normal field and $\mathbf H = -H\mathbf n=-div(\mathbf n)\mathbf n$ is the mean curvature vector.

By Huisken's monotonicity formula \cite{H}, the entropy is non-increasing under mean curvature flow. A direct computation gives that the entropy of a hyperplane is 1, minimizing entropy among all complete immersed hypsersurfaces.

It's natural to consider which closed surfaces minimize the entropy among all closed hypersurfaces, and the uniqueness and stability property of such surfaces. In \cite{BW2}, Bernstein-Wang showed that the round spheres $\mathbb S^n$ uniquely minimize the entropy (modulo dilations and rigid motions) among closed hypersurfaces in $\mathbb R^{n+1}$ for $2\leq n\leq 6$, giving an affirmative answer to a conjecture made by Colding-Ilmanen-Minicozzi-White in \cite{CIMW}. Later, Zhu \cite{Z} extended the result to all higher dimensions. In \cite{BW1}, Bernstein-Wang further showed that, for $n=2$, the round sphere is Hausdorff stable under small perturbations of entropy. In the same paper \cite{BW1}, they got an explicit relationship between the (normalized) Hausdorff distance of a surface to a round sphere and the difference between their entropies, which is analogous to the Bonnesen isoperimetric inequality for planar curves measuring the roundness of the circle by the isoperimetric defect.

In this note, we show the Hausdorff stability for the round n-sphere, generalizing the result of \cite{BW1} to closed hypersurfaces in $\mathbb R^{n+1}$.

\begin{thm}\label{thm1}
For any $\epsilon>0$, there is $\delta=\delta(\epsilon)>0$ such that, if $\Sigma$ is a closed hypersurface in $\mathbb R^{n+1}$ with entropy $\lambda(\Sigma)<\lambda(\mathbb S^n)+\delta$, then 
\begin{gather*}
\inf_{\rho>0,\mathbf y\in\mathbb R^{n+1}}\mathrm{dist}_H(\frac{1}{\rho}\Sigma-\mathbf y,\mathbb S^n)<\epsilon,
\end{gather*} 
or in other words, there exists some $\rho_0>0,\mathbf{y_0}\in\mathbb R^{n+1}$ such that 
\begin{gather*}
\mathrm{dist}_H(\Sigma,\rho_0\mathbb S^n+\mathbf{y_0})<\rho\epsilon.
\end{gather*}
\end{thm}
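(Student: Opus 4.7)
Proof plan. I would proceed by contradiction. Suppose the theorem fails, so there exist $\epsilon_0 > 0$ and a sequence of closed hypersurfaces $\Sigma_i \subset \mathbb{R}^{n+1}$ with $\lambda(\Sigma_i) \to \lambda(\mathbb{S}^n)$ such that no rescaled and translated copy of $\Sigma_i$ lies within Hausdorff distance $\epsilon_0$ of $\mathbb{S}^n$. Using the dilation/translation invariance of $\lambda$, I normalize each $\Sigma_i$ so that $F(\Sigma_i) \geq \lambda(\Sigma_i) - 1/i$; this fixes a ``scale 1, center 0'' representative of each rescaling class. Huisken's monotonicity gives uniform local area bounds, and the connectedness of $\Sigma_i$ together with $\lambda(\Sigma_i) < 2\lambda(\mathbb{S}^n)$, which forbids bulk mass from escaping to infinity by the (approximate) additivity of $F$ on well-separated pieces, should give a uniform diameter bound for the normalized sequence.

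The main engine is the mean curvature flow combined with the Bernstein--Wang and Zhu entropy rigidity: every non-flat self-shrinker $\Gamma \subset \mathbb{R}^{n+1}$ satisfies $\lambda(\Gamma) \geq \lambda(\mathbb{S}^n)$, with equality only for round shrinking spheres, and smooth compactness of shrinkers with bounded entropy upgrades this to a quantitative gap. Let $\{\Sigma_i^t\}$ denote the MCF of $\Sigma_i$ with first singular time $T_i$, and let $x_i$ be a singular point. Huisken's monotonicity forces any tangent flow at $(x_i, T_i)$ to be a self-shrinker with entropy at most $\lambda(\Sigma_i) < \lambda(\mathbb{S}^n) + \delta_i$; since the tangent flow at a singular point cannot be a static plane, for $i$ large enough it must be a round shrinking sphere (ruling out cylinder singularities in particular, since $\lambda(\mathbb{S}^{n-1}) > \lambda(\mathbb{S}^n)$ for $n \geq 2$). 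White's version of Brakke's regularity theorem then implies that for $t$ slightly less than $T_i$, a neighborhood of $x_i$ in $\Sigma_i^t$ is a smooth normal graph of small height over a shrinking sphere $\sqrt{2n(T_i - t)}\,\mathbb{S}^n + \mathbf{y}_i$; because this graphical piece is a closed topological sphere and $\Sigma_i^t$ is a connected smooth deformation of the connected $\Sigma_i$, the graph must in fact be all of $\Sigma_i^t$.

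To close the contradiction I propagate the near-spherical behavior back to $t = 0$ via a compactness argument. The uniform entropy bound yields Brakke compactness of the family $\{\Sigma_i^t\}_{t \in [0, T_i]}$, and a subsequential limit is an integer rectifiable Brakke flow that agrees with a round shrinking sphere flow on a neighborhood of its singular time; unique continuation for smooth MCFs then forces the limit to coincide with the round shrinking sphere flow on its entire interval of existence. In particular $\Sigma_i^0$ converges in Hausdorff distance (along a subsequence) to a round sphere, contradicting the assumption. I expect this last step to be the main obstacle: one has to rule out the possibility that $\Sigma_i^0$ carries ``hidden'' extra structure (concentrations of mass, or far-away pieces) that is swallowed by the limit Brakke flow before $T_i$ without leaving a visible trace at the first singularity. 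The a priori diameter and entropy bounds from the normalization step, combined with the entropy gap that excludes all non-spherical tangent flows throughout the flow, are the ingredients that make this backward propagation go through.
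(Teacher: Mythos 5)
Your overall strategy—argue by contradiction, run mean curvature flow, classify the tangent flow at the singularity as a round sphere using the Bernstein--Wang/Zhu entropy rigidity, apply Brakke regularity to get smooth graphical convergence near the singularity, and then propagate backward to the initial time—is indeed the skeleton of the paper's proof. But there are two genuine gaps that the paper spends most of its machinery addressing, and both matter.

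First, you work with the smooth MCF up to the first singular time $T_i$, and assert that for large $i$ the tangent flow there must be a round sphere ``ruling out cylinder singularities in particular.'' This is not enough: in dimensions $n\geq 3$ the tangent flow at a \emph{first} singular time could a priori be a non-compact, asymptotically conical self-shrinker, and there is no known entropy lower bound ruling these out (the paper says this explicitly; the $n=2$ case in \cite{BW1} uses \cite{BW3}, which is unavailable in higher dimensions). The paper avoids this by working with Ilmanen's matching motions, which continue past all intermediate singularities until extinction, and then classifies the tangent flow at the \emph{extinction} time (Proposition \ref{thm11}). The point is that an associated flow to a non-compact self-shrinker cannot be extinct at time $0$ (Lemma \ref{thm12}, and an inductive dimension-reduction argument for $n\geq 4$ using the backward splitting Lemma \ref{thm10}), so at extinction only the sphere can appear. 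Your smooth-flow version of the argument has no way to exclude non-compact singularities before extinction and so cannot reach the sphere classification. Your ``diameter bound from connectedness and low entropy'' claim is also not justified: bounded entropy and connectedness alone do not give a uniform diameter bound on the normalized surfaces, and the paper does not normalize that way—it normalizes the extinction time instead.

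Second, and more importantly, your ``backward propagation'' step—which you yourself flag as the main obstacle—is exactly the gap the paper's central new ingredient is designed to fill. ``Unique continuation'' for the limit Brakke flow does not suffice: the issue is quantitative control of the \emph{Hausdorff distance} from the initial slice to the (rescaled) sphere, including regions far from the forming singularity. The paper obtains this via the two-sided clearing-out lemma (Theorem \ref{thm2}/Theorem \ref{thm19}), which under the low-entropy hypothesis gives a uniform lower mass bound at all scales both forward \emph{and backward} in time, and upgrades it to the Hausdorff estimate $\mathrm{dist}_H(\spt\mu_{t_1},\spt\mu_{t_2})\leq C\sqrt{t_2-t_1}$ (Lemma \ref{thm20}). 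This is what lets one control $\tilde\mu_{i,-1}$ (the initial slice after rescaling) in terms of $\tilde\mu_{i,\tau}$ at some $\tau$ strictly inside $(-1,0)$, where Brakke regularity applies. Without such a quantitative two-sided clearing-out statement, the Brakke limit at a fixed interior time tells you nothing about the initial time slice, and the contradiction cannot be closed. In short, your proposal identifies the right high-level outline but is missing precisely the two technical contributions of the paper—the weak-flow/extinction-time device that circumvents unknown non-compact shrinker classifications, and the two-sided clearing-out/Hausdorff continuity estimate that makes backward propagation rigorous.
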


In \cite{BW1}, when they prove the case for $n=2$, they need to use the result in \cite{BW3} ruling out non-flat self-shrinkers with low entropy. This is not known in higher dimensions. We are not able to show that Bonnesen type isoperimetric inequality as in \cite{BW1}.

To prove the theorem above, we will choose a special kind of weak flow developed by Ilmanen \cite{I1}, and prove a uniform dependence on time of Hausdorff distance between different time slices of the flow. This uniform dependence can be viewed as a 2-sided version of Brakke's clearing out lemma (see \cite{B} 12.2) when the entropy is small, which works both backward and forward in time. As this may be of independent interest, we record it here in the setting of smooth flows:

\begin{thm}\label{thm2}
There exists $\delta(n)>0,C(n)>0,\eta(n)>0,\gamma(n)\in(0,1)$ so that: if $\{M_t^n\}$ is a a mean curvature flow of hypersurfaces in $\mathbb R^{n+1}$ that reaches the space-time point $(x_0,t_0)\in\mathbb R^{n+1}\times\mathbb R$, with entropy $\lambda(M_t)\leq\lambda(\mathbb S^n)+\delta(n)$, and $M_t\neq\emptyset$ for all $t\in(t_0-R^2,t_0+R^2)$, then for all  $0<C\rho<\gamma R$
\begin{gather*}
\mathcal H^n(B_{\rho}(x_0)\cap M_{t_0-C^2\rho^2})\geq\eta\rho^n
\end{gather*}
and
\begin{gather*}
\mathcal H^n(B_{\rho}(x_0)\cap M_{t_0+C^2\rho^2})\geq\eta\rho^n
\end{gather*}
where $\mathcal H^n$ denotes the n-dimensional  volume on hypersurfaces.
\end{thm}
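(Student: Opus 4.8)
The plan is to run a compactness/contradiction argument against the rigidity statement for self-shrinkers with small entropy. Suppose the first inequality fails: there is a sequence of flows $\{M_t^{(j)}\}$ reaching space-time points $(x_0^{(j)}, t_0^{(j)})$, with entropy $\leq \lambda(\mathbb{S}^n) + 1/j$, defined on $(t_0^{(j)} - R_j^2, t_0^{(j)} + R_j^2)$, and radii $\rho_j$ with $C\rho_j < \gamma R_j$ (for constants $C,\gamma$ to be determined), yet $\mathcal{H}^n(B_{\rho_j}(x_0^{(j)}) \cap M^{(j)}_{t_0^{(j)} - C^2\rho_j^2}) < j^{-1}\rho_j^n$. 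After translating $(x_0^{(j)}, t_0^{(j)})$ to the origin and parabolically rescaling by $\rho_j^{-1}$, I get flows $\widetilde M^{(j)}_t$ passing through the origin at time $0$, defined on an expanding time interval $(-(R_j/\rho_j)^2, (R_j/\rho_j)^2) \supset (-(\gamma/C)^2, (\gamma/C)^2)$, with entropy still $\leq \lambda(\mathbb{S}^n)+1/j$, such that $\mathcal{H}^n(B_1 \cap \widetilde M^{(j)}_{-C^2}) \to 0$.

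Next I would invoke Ilmanen's compactness theory for Brakke flows with uniformly bounded entropy (or Brakke's original compactness): after passing to a subsequence, the flows $\widetilde M^{(j)}_t$ converge weakly (as integral Brakke flows, or as Radon measures in space-time) to a limit Brakke flow $\widetilde M^\infty_t$ that is nonempty, passes through the origin at $t=0$ (using that $0 \in \widetilde M^{(j)}_0$ together with a local regularity or density lower bound at the space-time origin — the Gaussian density is $\geq 1$ there), has entropy $\leq \lambda(\mathbb{S}^n)$, and satisfies $\mathcal{H}^n(B_1 \cap \widetilde M^\infty_{-C^2}) = 0$. The point of fixing $C$ large (depending only on $n$): by Huisken monotonicity, the Gaussian density centered at $(0,0)$ evaluated at time $-C^2$ is controlled by the mass ratio $\mathcal{H}^n(B_{R}(0) \cap \widetilde M^\infty_{-C^2})$ on balls of radius comparable to $C$, and the Gaussian weight $e^{-|x|^2/(4C^2)}$ localizes this integral; so if the local mass in $B_1$ vanishes but we need the full Gaussian integral, I instead use the one-parameter family of monotonicity formulas based at nearby points, or simply note that vanishing in $B_1$ at time $-C^2$ forces, via Brakke's clearing-out propagated forward, that $\widetilde M^\infty$ is empty in a definite space-time neighborhood of the origin — contradicting $0 \in \widetilde M^\infty_0$. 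More cleanly: Huisken's monotonicity plus the entropy bound $\lambda \leq \lambda(\mathbb{S}^n)$ forces the tangent flow at $(0,0)$ to be a self-shrinker of entropy $\leq \lambda(\mathbb{S}^n)$, hence (by the Colding-Ilmanen-Minicozzi-White / Bernstein-Wang / Zhu classification, valid in all dimensions for the entropy-minimizing bound) either a hyperplane or a shrinking sphere or cylinder; in every case the flow has a definite amount of mass in every small ball at times slightly before $0$, and a local density computation shows $\mathcal{H}^n(B_1 \cap \widetilde M^\infty_{-C^2}) \geq \eta_0(n) > 0$ once $C$ is chosen so that the self-shrinker at scaled time $-C^2$ still meets $B_1$. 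This is the contradiction. The forward inequality is proved identically, running the argument at time $+C^2\rho^2$.

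I expect the main obstacle to be pinning down the constants $C(n)$ and $\gamma(n)$ and making the blow-up limit argument rigorous at the endpoint — specifically, ensuring that the limit Brakke flow genuinely passes through the origin (no sudden vanishing, which requires a lower density bound $\Theta(\widetilde M^{(j)}, (0,0)) \geq 1$ that survives the limit, guaranteed by the monotonicity formula and the fact that $0 \in \widetilde M^{(j)}_0$ with the flow smooth there), and that convergence of Brakke flows gives convergence of the spatial measures at the fixed negative time $-C^2$ rather than merely in a space-time-averaged sense (here one uses that the limit is an integral Brakke flow and that mass can only drop in the limit, so $\mathcal{H}^n(B_1 \cap \widetilde M^\infty_{-C^2}) \leq \liminf \mathcal{H}^n(B_1 \cap \widetilde M^{(j)}_{-C^2}) = 0$ at a.e. time, upgraded to the specific time by the semicontinuity properties of Brakke flows or by choosing the time slightly generically). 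The role of the small-entropy hypothesis is exactly to rule out complicated tangent flows: without $\lambda \leq \lambda(\mathbb{S}^n) + \delta$ there could be, e.g., Angenent-torus-type or higher-genus shrinkers, and no clean lower mass bound. Everything else — the parabolic rescaling, Huisken monotonicity, and the classification of low-entropy shrinkers — is quoted from the literature cited in the introduction.
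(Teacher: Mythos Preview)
The backward inequality is immediate from Brakke's clearing-out lemma (Lemma~\ref{thm17}) without any blow-up or entropy hypothesis: if $\mu_{-\rho^2}(B_{C\rho}(0)) < \eta(C\rho)^n$ with $C \ge 1/\sqrt{c_1}$, clearing out forward in time gives $\mu_0(B_{C\rho/2}(0)) = 0$, contradicting $0 \in \spt(\mu_0)$. Your compactness argument for that direction is correct but superfluous. The genuine gap is the \emph{forward} inequality, which you dismiss as ``identical.'' Neither of your two mechanisms applies at a future time: clearing out propagates emptiness only forward, so vanishing in $B_1$ at time $+C^2$ says nothing about time $0$; and a tangent flow at $(0,0)$ is backwardly self-similar and constrains only the past of the limit flow, not the slice at $+C^2$. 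As the bowl soliton in the remark after Theorem~\ref{thm2} shows, the forward estimate is genuinely false without the low-entropy hypothesis, so a qualitatively different argument is required there.

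The paper's mechanism runs in two stages. First, for \emph{eternal} matching motions (Proposition~\ref{thm18}): if the forward bound fails along a sequence, the rescaled limit is ancient and becomes extinct before time $1$; the classification in Proposition~\ref{thm15} then forces this limit to be the flow of a topological sphere; Brakke regularity (Proposition~\ref{thm3}) transfers a spherical singularity back to the approximating flows, and Proposition~\ref{thm16} says a spherical singularity under this entropy bound is an extinction---contradicting eternality. Second, the local statement (Theorem~\ref{thm19}, which is Theorem~\ref{thm2} for smooth flows) is deduced by a further blow-up in which one takes $\gamma_i \to 0$, so that the rescaled flows are defined on time intervals of length $\to\infty$ and the limit is ancient; one then invokes the eternal case and, again, Proposition~\ref{thm15}. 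Your setup keeps $\gamma$ fixed, so your limit lives only on a bounded time interval and this machinery is unavailable. In short, the low-entropy hypothesis does its work not by taming the tangent flow at $(0,0)$ but through Propositions~\ref{thm15} and~\ref{thm16}: it forces any ancient non-eternal limit to be a spherical flow and makes spherical singularities terminal, and these are the missing ingredients in your forward argument.
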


\begin{rem}
This 2-sided clearing out lemma is not true for flows $\{\M_t\}$ with entropy $\lambda(\{M_t\})\geq\lambda(\mathbb S^{n-1}\times\mathbb R)$. Consider the rotational-symmetric translating "bowl" soliton, whose entropy is equal to $\lambda(\mathbb S^{n-1}\times\mathbb R)$ and is rescaled so that the speed of translation is $\frac{1}{C^2}$. For any $\gamma\in(0,1)$, by choosing $\rho>1,R>\frac{C\rho}{\gamma}$, and $(x_0,t_0)$ to be the tip of the translating "bowl" soliton, we get a counter example. However, we speculate that the theorem should still hold under the entropy bound $\lambda(\{M_t\})<\lambda(\mathbb S^{n-1}\times\mathbb R)$.
\end{rem}

\section{Notations}

\subsection{}

We denote by $B_R^{n+1}$ and $\bar B_R^{n+1}$ the open ball and closed ball in $\mathbb R^{n+1}$ with radius $R$ respectively. We omit the super-script when the dimension is clear from context.

Given two compact subsets $X, Y\subset\mathbb R^{n+1}$, the Hausdorff distance $\mathrm{dist}_H(X,Y)$ between X and Y is defined by
\begin{gather*}
\mathrm{dist}_H(X,Y)=\inf\{r>0:X\subset\cup_{x\in Y}\bar B_r(x)\:and\:Y\subset\cup_{x\in X}\bar B_r(x)\}
\end{gather*}.

For any $\rho>0$, $x_0\in\mathbb R^{n+1}$ and $\Omega\subset\mathbb R^{n+1}$, we donote by 
\begin{gather*}
\Omega-x_0=\{x\in\mathbb R^{n+1}:x+x_0\in\Omega\}\\
\rho\Omega=\{\rho x:x\in\Omega\}
\end{gather*}

\subsection{}

Following the notations in \cite{I1}, we denote by
\begin{itemize}
  \item $\mathbf M(\mathbb R^{n+1})$ = \{$\mu$: $\mu$ is a Radon measure on $\mathbb R^{n+1}$\}
  \item $\mathbf {IM}_k(\mathbb R^{n+1})$ = \{$\mu$: $\mu$ is an integer k-rectifiable Radon measure on $\mathbb R^{n+1}$\}  
  \item $\mathbf I_k(\mathbb R^{n+1})$ = \{T: T is an integeral k-current on $\mathbb R^{n+1}$\}
  \item $\mathbf {IV}_k(\mathbb R^{n+1})$ = \{V: V is an integer k-rectifiable varifold on $\mathbb R^{n+1}$\} 
\end{itemize}

$\mathbf M(\mathbb R^{n+1})$, $\mathbf {IM}_k(\mathbb R^{n+1})$ and $\mathbf {IV}_k(\mathbb R^{n+1})$ are equipped with corresponding $\text{weak}^*$ topologies. $\mathbf {I}_k(\mathbb R^{n+1})$ is equipped with the flat topology. See \cite{I1} section 1 for details of the topologies and corresponding compactness theorems. 

There are natural maps
\begin{gather*}
V_1:\mathbf {IM}_k(\mathbb R^{n+1})\rightarrow\mathbf {IV}_k(\mathbb R^{n+1})\\
V_2:\mathbf I_k(\mathbb R^{n+1})\rightarrow\mathbf {IV}_k(\mathbb R^{n+1})\\
\mu_1: \mathbf I_k(\mathbb R^{n+1})\rightarrow\mathbf {IM}_k(\mathbb R^{n+1})\\
\mu_2: \mathbf  {IV}_k(\mathbb R^{n+1})\rightarrow\mathbf {IM}_k(\mathbb R^{n+1})
\end{gather*}
Of the above maps, only $\mu_2$ is continuous. We use the following notations for convenience:
\begin{gather*}
V_1(\mu)=V(\mu)=V_\mu\\
V_2(T)=V(T)=V_T\\
\mu_1(T)=\mu(T)=\mu_T\\
\mu_2(V)=\mu(V)=\mu_V
\end{gather*}

Following definitions in \cite{W1}, an integral current $T\in\mathbf I_k(\mathbb R^{n+1})$ and an integer k-rectifiable (integral) varifold $V\in\mathbf{IV}_k(\mathbb R^{n+1})$ are said to be compatible if $V=V(T)+2W$ for some integral varifold $W\in\mathbf{IV}_k(\mathbb R^{n+1})$.

If $\Sigma^n\subset\mathbb R^{n+1}$ is a hypersurface, we denote by $\mu_{\Sigma}=\mathcal H^n\lfloor\Sigma\in\mathbf{IM}_n(\mathbb R^{n+1})$.

\subsection{}

A self-shrinker of mean curvature flow is a hypersurface $\Sigma\subset\mathbb R^{n+1}$ that satisfies the equation
\begin{gather*}
\mathbf H_\Sigma + \frac{1}{2}\mathbf x^\perp =0
\end{gather*}

It is the time $-1$ slice of a mean curvature flow that is shrinking with self-similarity
\begin{gather*}
\Sigma_t=\sqrt{-t}\Sigma_{-1},\:(t<0) \\
\Sigma_{-1}=\Sigma
\end{gather*}

The singularities of mean curvature flow are modeled on self-shrinkers by Huisken's monotonicity formula \cite{H}
\begin{gather}
\frac{d}{dt}\int_{\Sigma_t}\rho(x,t)=\int_{\Sigma_t}\rho|\mathbf H_{\Sigma_t}-\frac{1}{2t}\mathbf x^\perp|^2,\:(t<0)
\end{gather}

where $\rho(x,t)=\frac{1}{(-4\pi t)^{\frac{n}{2}}}e^{\frac{|x|^2}{4t}}$.

Self-shrinkers are critical points for the Gaussian area functional F. The entropy of a self-shrinker is equal to its Gaussian area according to computations in \cite{CM}. The associated flow for the self-shrinker has constant entropy independent of time. 

In general, the entropy of a flow is defined by 
\begin{gather*}
\lambda(\{\Sigma_t\}_{t\in I})=\sup_{t\in I}\lambda(\Sigma_t)
\end{gather*}

Important examples of self-shrinkers are generalized cylinders defined, for $0\leq k\leq n$, by
\begin{gather*}
(\sqrt{2k}\mathbb S^k)\times\mathbb R^{n-k}=\{(\mathbf x,\mathbf y)\in\mathbb R^{k+1}\times\mathbb R^{n-k}:|\mathbf x|^2=2k\}
\end{gather*}

One has
\begin{gather*}
\Lambda_k=\lambda(\mathbb S^k)=\lambda(\mathbb S^k\times\mathbb R^{n-k})
\end{gather*}

and by Stone \cite{St}
\begin{gather}
2>\Lambda_1>\frac{3}{2}>\Lambda_2>...>\Lambda_n>...\rightarrow\sqrt2
\end{gather}

\section{Weak mean curvature flows}

In this section, we gather various notions of weak mean curvature flows and prove some properties of them that will be used in this note. We mostly follow the formulations in \cite{I1}.

\subsection{}

An n-dimensional Brakke flow (Brakke motion) $\mathcal K$ in $\mathbb R^{n+1}$ is a family of Radon measures $\mathcal K = \{\mu_t\}_{t\in I}$, $\mu_t\in \mathbf {IM}_n(\mathbb R^{n+1})$, such that

(1) for a.e. t, $\mu_t=\mu(V_t)$ for some varifold $V_t\in\mathbf {IV}_n(\mathbb R^{n+1})$ so that the first variation:
\begin{gather*}
\delta V_t(X) = -\int \mathbf H(x)\cdot X d\mu_t
\end{gather*}
where $\mathbf H$ is the weak mean curvature vector field for a varifold.

(2) for any test function $f\in C_c^1(\mathbb R^{n+1}\times[a,b])$ and $f\geq0$
\begin{gather}\label{eq1}
\int f(\cdot,b)d\mu_b-\int f(\cdot,a)d\mu_a\leq \int_a^b\int (-|H|^2f+\mathbf H\cdot \nabla f+\frac{\partial f}{\partial t})d\mu_tdt
\end{gather}

A smooth mean curvature flow is automatically a Brakke motion with the inequality in (\ref{eq1}) becoming an equality. 

A Brakke flow $\{\mu_t\}_{t\in\mathbb R}$ is called eternal if $\spt\mu_t\neq\emptyset$ for all $t\in\mathbb R$.

We will restrict out attention to n-dimensional Brakke flows $\{\mu_t\}_{t\in I}$ in $\mathbb R^{n+1}$ with bounded area ratios, i.e., for which there is a $C<\infty$ so that for all $t\in I$,
\begin{gather}
\sup_{x\in\mathbb R^{n+1}}\sup_{R>0}\frac{\mu_t(B_R(x))}{R^n}\leq C
\end{gather}

Ilmanen (\cite{I2} Lemma 7) observed that the monotonicity formula of Huisken \cite{H} could be extended to the class of Brakke flows with initial data that has bounded area ratio:  
\begin{gather}
\int\rho(x,t_2)d\mu_2-\int\rho(x,t_1)d\mu_1\leq-\int_{t_1}^{t_2}\rho|\mathbf H-\frac{1}{2t}\mathbf x^\perp|^2d\mu_t(x)dt
\end{gather}
for $t_1<t_2<0$.

As a corollary, bounded area ratio at an initial time will be of bounded area ratio with the same constant in later time.

Given a flow with bounded area ratio, we define the Huisken's density $\Theta_{(x_0,t_0)}(\{\mu_t\})$ at the space-time point $(x_0,t_0)$ to be
\begin{gather*}
\Theta_{(x_0,t_0)}(\{\mu_t\})=\lim_{s\rightarrow t_0^-}\int \frac{1}{(-4\pi (s-t_0))^{\frac{n}{2}}}e^{\frac{|x-x_0|^2}{(s-t_0)} }d\mu_{s}(x)
\end{gather*}
which is upper semi-continuous by the monotonicity.

The entropy of a Brakke flow $\mathcal K=\{\mu_t\}_{t\in I}$ is defined by $\lambda(\mathcal K)=\sup_{t\in I}\lambda(\mu_t)$. It is a lower semi-continuous functional.

\begin{rem}
It is not hard to see that, for a Radon measure, bounded area ratios are equivalent to finite entropy.
\end{rem}

\begin{defn}
Let $\mathcal K_i=\{\mu_{i,t}\}_{t\geq t_0}$ be a sequence of integral Brakke flows, we say $\mathcal K_i$ converges to $\mathcal = \{\mu_t\}_{t\geq t_0}$, if 

(1) $\mu_{i,t}\rightarrow\mu_t$ for all $t\geq t_0$

(2) for a.e. $t\geq t_0$, there is a subsequence $i(k)$, depending on t, so that $V_{\mu_{i(k),t}}\rightarrow V_{\mu_t}$
\end{defn}

Convergence for flows with varying time intervals is defined analogously.

Brakke flows with uniform local mass bound have a good compactness theorem.
 
\begin{thm}(\cite{I1} section 7, cf. \cite{B} chapter 4) 

Let $\mathcal K_i=\{\mu_{i,t}\}_{t\geq t_0}$ be a sequence of n-dimensional integral Brakke flows so that for all bounded open $U\subset\mathbb R^{n+1}$,
\begin{gather*}
\sup_i\sup_{t\in[t_0,\infty)}\mu_{i,t}(U)\leq C(U)<\infty
\end{gather*}.

There is a subsequence $i(k)$ and an integral Brakke flow $\mathcal K$ so that $\mathcal K_{i(k)}\rightarrow\mathcal K$.
\end{thm}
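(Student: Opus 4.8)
The plan is the standard two-stage scheme for weak-flow compactness: first extract, by a compactness/diagonalization argument, a subsequential weak-$*$ limit $\mu_t$ of the measures $\mu_{i,t}$ at \emph{every} time $t\geq t_0$; then verify that the resulting family $\{\mu_t\}$ is again an integral Brakke flow and that the convergence is of the type in the definition above (conditions (1) and (2)).

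\emph{Stage 1 (extracting the limit measures).} Fix a countable family $\mathcal{F}=\{f_k\}\subset C_c^2(\mathbb R^{n+1})$ with $f_k\geq0$ and $|\nabla f_k|^2/f_k$ bounded, dense enough that the $f_k$ determine Radon measures. For each fixed $k$ I apply the Brakke inequality (\ref{eq1}) with the time-independent test function $f_k$; completing the square gives
\begin{gather*}
-|H|^2f_k+\mathbf H\cdot\nabla f_k=-f_k\Big|\mathbf H-\tfrac{\nabla f_k}{2f_k}\Big|^2+\tfrac{|\nabla f_k|^2}{4f_k}\leq\tfrac{|\nabla f_k|^2}{4f_k},
\end{gather*}
so, by the uniform mass bound on $\spt f_k$, the function $t\mapsto\int f_k\,d\mu_{i,t}-\tfrac14\big(\sup|\nabla f_k|^2/f_k\big)C(\spt f_k)(t-t_0)$ is non-increasing for every $i$. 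Thus $t\mapsto\int f_k\,d\mu_{i,t}$ is a bounded non-increasing function plus a fixed Lipschitz function; in particular these functions are uniformly bounded with uniformly bounded variation on compact time-intervals. By Helly's selection theorem and a diagonal argument over $k$ (and over an exhaustion of $[t_0,\infty)$), I pass to a subsequence $i(k)$ along which $\int f_k\,d\mu_{i,t}$ converges for every $k$ and every $t\geq t_0$; the uniform local mass bound then produces, for each $t$, a unique Radon measure $\mu_t$ with $\int f_k\,d\mu_t=\lim_i\int f_k\,d\mu_{i,t}$, and $\mu_{i,t}\to\mu_t$ weak-$*$ for all $t\geq t_0$. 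This is condition (1).

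\emph{Stage 2 (the limit is an integral Brakke flow).} First I record a uniform space-time curvature bound: applying (\ref{eq1}) on $[a,b]$ with a time-independent cutoff $\varphi\in C_c^2$, $\varphi\geq0$, $\varphi\equiv1$ on a compact $K$, and using $\mathbf H\cdot\nabla\varphi\leq\tfrac12|H|^2\varphi+\tfrac12|\nabla\varphi|^2/\varphi$ to absorb, one gets $\int_a^b\!\int_K|\mathbf H_{i,t}|^2\,d\mu_{i,t}\,dt\leq C(K,a,b)$ uniformly in $i$. By Fatou in the time variable, for a.e.\ $t$ there is a ($t$-dependent) subsequence with $\sup_k\int_K|\mathbf H_{i(k),t}|^2\,d\mu_{i(k),t}<\infty$; together with the uniform mass bound, Allard's integral-varifold compactness theorem gives $V_{\mu_{i(k),t}}\to V_t$ in $\mathbf{IV}_n(\mathbb R^{n+1})$ with $V_t$ integral, of locally bounded first variation, and with $\mathbf H_{V_t}\in L^2_{\mathrm{loc}}$; continuity of $\mu_2$ forces $\mu_{V_t}=\mu_t$. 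This simultaneously verifies that $\mu_t$ is for a.e.\ $t$ the weight measure of such a $V_t$ and gives condition (2) of the convergence. Finally I pass the Brakke inequality itself to the limit: for $f\in C_c^1(\mathbb R^{n+1}\times[a,b])$, $f\geq0$, the boundary terms converge by Stage 1 and the $\partial_tf$-term converges by dominated convergence (pointwise convergence of $t\mapsto\int\partial_tf\,d\mu_{i,t}$ plus the uniform mass bound); for the curvature term I rewrite $\int(-|H|^2f+\mathbf H\cdot\nabla f)\,d\mu_{i,t}=\int\Big(\tfrac{|\nabla f|^2}{4f}-f\big|\mathbf H-\tfrac{\nabla f}{2f}\big|^2\Big)\,d\mu_{i,t}$, use lower semicontinuity of $V\mapsto\int g\,|\mathbf H_V-W|^2\,d\mu_V$ under varifold convergence ($g\geq0$, $W$ continuous) together with the a.e.-$t$ varifold convergence above, and apply Fatou in $t$ to get $\int_a^b\!\int(\cdots)\,d\mu_t\,dt\geq\limsup_i\int_a^b\!\int(\cdots)\,d\mu_{i,t}\,dt$. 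Combining the three terms gives the Brakke inequality for $\{\mu_t\}$, and the a.e.-$t$ integral structure established above makes it an integral Brakke flow.

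The main obstacle is this last point: the curvature expression $-|H|^2f+\mathbf H\cdot\nabla f$ is only meaningful integrated in time, and passing it to the limit requires combining Allard's integral compactness at almost every time with the lower semicontinuity of the $L^2$-norm of the generalized mean curvature under varifold convergence, and then Fatou's lemma in the time variable. This is precisely the technically heaviest part of Brakke's and Ilmanen's treatment, and everything else (Stage 1 and condition (1)) is essentially soft functional analysis on top of the almost-monotonicity coming from the Brakke inequality.
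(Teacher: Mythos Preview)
The paper does not give its own proof of this theorem; it is stated purely as a citation of Ilmanen \cite[Section 7]{I1} (cf.\ Brakke \cite[Chapter 4]{B}), so there is no in-paper proof to compare against. Your outline is precisely the standard Ilmanen/Brakke argument: the almost-monotonicity of $t\mapsto\int f\,d\mu_{i,t}$ to run Helly and a diagonal extraction for Stage~1, the space-time $L^2$ bound on $\mathbf H$ from the Brakke inequality plus Allard integral compactness at a.e.\ time for the varifold structure and condition~(2), and lower semicontinuity of the quadratic curvature functional combined with Fatou in time to pass the Brakke inequality to the limit. As a sketch it is accurate and identifies the genuine technical weight correctly; nothing here deviates from, or improves upon, what is in the references the paper cites.
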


In particular, the compactness theorem works for sequence of flows with a uniform entropy bound.

In \cite{B}, Brakke developed partial regularity theorem for Brakke flows. Later, White \cite{W3} simplified the proof for a special, but large class of Brakke flow, which include the class we use here. We will make use of a corollary of their theorem:

\begin{prop}\label{thm3}
(Proposition 3.7 of \cite{BW2}) Let $\{\mu_{i,t}\}_{t\geq t_0}$ be a sequence of integral Brakke flows converging to a limit integral Brakke flow $\{\mu_{t}\}_{t\geq t_0}$. If the limit flow is regular (smooth) in $B_R(y)\times(t_1,t_2)$, then

(1) for each $t_1<t<t_2$, $\spt(\mu_{i,t})\rightarrow\spt(\mu_t)$ in $C^\infty_{loc}(B_R(y))$

(2) given $\epsilon>0$, there is an $i_0=i_0(\epsilon,\{\mu_t\})$ so that if $i>i_0$, $\mu_{i,t}$ is regular (smooth) in $B_{R-\epsilon}(y)\times(t_1+\epsilon,t_2)$
 
\end{prop}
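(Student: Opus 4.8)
The plan is to deduce both assertions from White's local regularity theorem for Brakke flows (\cite{W3}; see also \cite{B}), whose hypothesis is a one-sided bound on Gaussian density ratios in a space-time neighborhood and whose conclusion is smoothness together with scale-invariant curvature estimates. Since the limit flow is smooth and of multiplicity one in $B_R(y)\times(t_1,t_2)$, this hypothesis holds for $\{\mu_t\}$ at every interior space-time point; the core of the argument is to transfer it, uniformly on compact space-time subsets, to the $\mu_{i,t}$ for $i$ large, after which a routine compactness argument yields the claims. Throughout, recall that the flows in question have locally bounded mass (indeed bounded area ratios), so Huisken's monotonicity in Ilmanen's form applies.

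First I would record the density picture for the limit. Fix $R'<R$ and $t_1<a<b<t_2$. Because $\{\mu_t\}$ is smooth and multiplicity one in $B_R(y)\times(t_1,t_2)$, Huisken's monotonicity gives $\Theta_{(x,t)}(\{\mu_t\})\in\{0,1\}$ for every $(x,t)$ with $x\in\bar B_{R'}(y)$ and $t\in[a,b]$, and moreover the Gaussian density ratios
\begin{gather*}
\Theta_{(x,t)}(\{\mu_t\},\sigma)=\int\frac{1}{(4\pi\sigma^2)^{n/2}}e^{-\frac{|z-x|^2}{4\sigma^2}}\,d\mu_{t-\sigma^2}(z)
\end{gather*}
decrease to this value as $\sigma\to 0$, uniformly over the compact set of such $(x,t)$ (using that $\spt\mu_t$ is smooth with bounded geometry there). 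Hence, given the regularity threshold $\epsilon_0=\epsilon_0(n)>0$ from White's theorem, there is $\sigma_0>0$ (which I shrink below as needed) so that $\Theta_{(x,t)}(\{\mu_t\},\sigma_0)<1+\epsilon_0/2$ for all such $(x,t)$; by monotonicity it then suffices to control this single scale.

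Next I would pass to the $\mu_{i,t}$. Since $\mu_{i,s}\to\mu_s$ as Radon measures for every $s$ and the Gaussian weights are fixed smooth rapidly-decaying functions, $\Theta_{(x,t)}(\{\mu_{i,t}\},\sigma_0)\to\Theta_{(x,t)}(\{\mu_t\},\sigma_0)$ for each $(x,t)$; since these spatially-mollified quantities are equicontinuous in $(x,t)$ and the parameter set is compact, the convergence is uniform, so there is $i_0$ with $\Theta_{(x,t)}(\{\mu_{i,t}\},\sigma_0)<1+\epsilon_0$ for all $i>i_0$ and all $x\in\bar B_{R'}(y)$, $t\in[a,b]$. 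Huisken's monotonicity for $\{\mu_{i,t}\}$ upgrades this to all scales $0<\sigma\le\sigma_0$. Applying White's local regularity theorem at each such point — the $\{\mu_{i,t}\}$ lie in the class to which it applies — shows that $\spt\mu_{i,t}\cap B_{R'}(y)$ is a smooth mean curvature flow on a slightly smaller time interval, with estimates $|\nabla^k A|\le C_k(n)/\sigma_0^{k+1}$ independent of $i$. The loss of radius and of past time is exactly the size of the parabolic balls used; taking $\sigma_0$ small compared to $\epsilon$, $R'=R-\epsilon/2$, $a=t_1+\epsilon$, and $b<t_2$ arbitrary gives (2).

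Finally, for (1): fix $t\in(t_1,t_2)$ and a compact $K\subset B_R(y)$, and choose $R',\epsilon$ with $K\subset B_{R'}(y)$ and $t>t_1+\epsilon$. The uniform curvature bounds just obtained let us represent $\spt\mu_{i,t}$ locally as graphs with uniformly bounded $C^k$ norms, so along any subsequence a further subsequence converges in $C^\infty_{loc}(B_{R'}(y))$ to a smooth hypersurface; the measure convergence $\mu_{i,t}\to\mu_t$ together with the bound $1+\epsilon_0<2$ (which rules out multiplicity $\ge 2$ in the limit) forces this limit to equal $\spt\mu_t\cap B_{R'}(y)$ with multiplicity one. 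As the subsequential limit is therefore unique, the full sequence converges, which is (1). The main obstacle is precisely the uniform-in-space-time transfer of the near-unit density bound from the limit to the approximating flows (Step three); everything downstream of White's $\epsilon$-regularity theorem is a standard Arzel\`a--Ascoli argument.
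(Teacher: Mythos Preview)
The paper does not supply its own proof of this proposition; it is quoted verbatim as Proposition~3.7 of \cite{BW2} and used as a black box. Your argument is precisely the standard one underlying that cited result: uniform near-unit Gaussian density ratios for the smooth limit on compact space--time subsets, transferred to the approximants by convergence, followed by White's local regularity theorem \cite{W3} and an Arzel\`a--Ascoli extraction. So there is nothing to compare against in this paper, and your reconstruction matches what one finds in \cite{BW2}.

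One point worth tightening: your claim that the density ratios $\Theta_{(x,t)}(\{\mu_{i,t}\},\sigma_0)$ are ``equicontinuous in $(x,t)$'' is not quite the right formulation, since for a general Brakke flow the map $t\mapsto\mu_t$ need not be continuous (sudden mass loss is permitted). The robust statement is the upper semi-continuity of Gaussian density ratios under Brakke convergence: if $(x_i,t_i)\to(x_0,t_0)$ then $\limsup_i\Theta_{(x_i,t_i)}(\{\mu_{i,t}\},\sigma_0)\le\Theta_{(x_0,t_0)}(\{\mu_t\},\sigma_1)$ for any $\sigma_1>\sigma_0$, which follows from monotonicity and weak convergence at the single fixed time $t_0-\sigma_1^2$. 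This immediately gives the uniform bound you need by a contradiction--compactness argument, and is how the step is handled in \cite{BW2} and \cite{W3}. With that adjustment your proof is complete.
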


Denote the parabolic rescaling and translation of a Brakke flow $\mathcal K=\{\mu_t\}$ by
\begin{gather*}
D_\rho\mathcal K= \{\rho\mu_{\frac{t}{\rho^2}}\}\\
\mathcal K-(x_0,t_0)=\{\mu_{t+t_0}-x_0\}
\end{gather*}

Using Huisken's monotonicity formula, Ilmanen (\cite{I2} Lemma 8) proved that: if $\Theta_{(x_0,t_0)}>0$ (this is equivalent to $\Theta_{(x_0,t_0)}\geq1$), then there is a subsequence $\rho_i\rightarrow\infty$ such that $D_{\rho_i}(\mathcal K-(x_0,t_0))\rightarrow\mathcal{\tilde K}$. Such a limit flow $\mathcal{\tilde K}$ is called a tangent flow at $(x_0,t_0)$, and it is a backward self-shrinker for negative time.

\subsection{}

For $T\in\mathbf I_{n+1}(\mathbb R^{n+1}\times\mathbb R)$, denote $T\lfloor(\mathbb R^{n+1}\times[a,b])=T_{a\leq t\leq b}$, $\partial T_{a\leq t\leq b}=T_a-T_b$,  and $\partial T_{t\geq a}=T_a$.

A pair $(T,\mathcal K)$ is called an enhanced motion, if $T\in\mathbf I_{n+1}(\mathbb R^{n+1}\times\mathbb R)$ and $\mathcal K=\{\mu_t\}_{t\in\mathbb R}$ satisfying 

(1) $\partial T = 0$ and $\partial (T_{t\geq s})=T_s$ and $T_t\in\mathbf I_{n}(\mathbb R^{n+1})$ for each time slice t

(2) $\partial T_t = 0$ for all t and $t\mapsto T_t$ is continuous in the flat topology

(3) $\mathcal K=\{\mu_t\}_{t\in\mathbb R}$ is a Brakke motion

(4) $\mu_{T_t} \leq\mu_t$ for all t and they are compatible for a.e. t

T is the undercurrent and $\mathcal K$ is the overflow.

An enhanced motion $(T,\mathcal K)_{t\geq0}$ with initial data $T_0$ is one that condition (1) above replaced by

(1') $\partial T=T_0,\mu_{T_0}=\mu_0$, and $\partial (T_{t> s})=T_s$ and $T_t\in\mathbf I_{n}(\mathbb R^{n+1})$ for each time slice t. 

An enhanced motion in a space-time open subset $U\times I\subset\mathbb R^{n+1}\times\mathbb R$ is defined by replacing the space-time domains $\mathbb R^{n+1}$ and $\mathbb R$ in the 4 items by $U$ and $I$ repectively.

The enhanced motion $(T,\mathcal K)$ is called a matching motion if $\mu_{T_t}=\mu_t=\mu_{V_t}$ for a.e. t.  So for matching motions, we do not distinguish $\mu_{T_t},\mu_t,\mu_{V_t}$ for a.e. t. A smooth flow automatically gives rise to a matching motion.

The existence of an enhanced motion with initial data a cycle was proved by Ilmanen in \cite{I1} using an elliptic regularization procedure, and reproved by White in \cite{W1}. The continuity in flat topology (2) was not explicitly stated in \cite{I1}, but was pointed out in \cite{W1}.

There are corresponding compactness theorems for integral currents and Brakke flows with finite mass, but we cannot guarantee that the limit of matching motions is still a matching motion in general due to lower semi-continuity of the map $V_2$. A counter example is the blow-down limit of a Grim-Reaper translating soliton of (smooth) mean curvature flow is a quasi-static multiplicity 2 plane with zero undercurrent, i.e. it is not matching.

However, we can rule this out for small entropy and get a compactness theorem for matching motions with low entropy.

\begin{thm} \label{thm4}
Let $(T_i,\mathcal K_i)$ be a sequence of matching motions in $\mathbb R^{n+1}\times I$, that converge to an enhanced motion $(T,\mathcal K)$ in $\mathbb R^{n+1}\times I$. If $\lambda(\mathcal K)<2$, then the limit is also a matching motion.
\end{thm}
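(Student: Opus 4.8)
The plan is to show that the multiplicity-2 "extra" varifold $W$ that can appear in the limit, i.e. the defect $V_t = V(T_t) + 2W_t$, must vanish for a.e. $t$ under the entropy bound $\lambda(\mathcal K) < 2$. The key point is a density-gap argument: at a.e. time $t$ the limit is a matching motion precisely when the underlying varifold $V_t$ is the varifold of the current $T_t$; the obstruction is that a piece of $M_t := \spt \mu_t$ might be covered with even multiplicity by the Brakke flow while cancelling in the current. I would quantify this via Gaussian densities. By upper semicontinuity of Huisken density under Brakke-flow convergence and the Brakke/White regularity theory (Proposition \ref{thm3}), at any space-time point $(x_0,t_0)$ where the current $T$ has density zero (so locally the current cancels) but $\mu_{t_0}$ is nontrivial, the Gaussian density $\Theta_{(x_0,t_0)}(\mathcal K)$ must be at least $2$: indeed a tangent flow there is a self-shrinker with even multiplicity almost everywhere on its support (the current piece cancels to zero, so the varifold multiplicity is $\geq 2$ on a set of full measure), hence its Gaussian area, which equals $\Theta_{(x_0,t_0)}$, is at least $2\cdot 1 = 2$, since any self-shrinker has Gaussian area at least that of the hyperplane $=1$. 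But $\Theta_{(x_0,t_0)}(\mathcal K) \leq \lambda(\mathcal K) < 2$ by the definition of entropy of a flow together with Huisken monotonicity, a contradiction.

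Concretely, the steps I would carry out in order are: (i) fix a time $t$ at which $\mathcal K = \{\mu_t\}$ is an integral Brakke flow with $V_t \in \mathbf{IV}_n$ well-defined and $T_t$ and $V_t$ compatible, $V_t = V(T_t) + 2W_t$ (this holds for a.e. $t$ since $(T,\mathcal K)$ is an enhanced motion); (ii) suppose for contradiction that $W_t \neq 0$, so there is a point $x_0 \in \spt \mu_{W_t}$; argue that one may in fact choose such $x_0$ (and after discarding a null set of times, such a $t$) to be a point of the space-time support where one can extract a tangent flow, i.e. $\Theta_{(x_0,t)}(\mathcal K) \geq 1$, using Ilmanen's tangent-flow existence (\cite{I2} Lemma 8); (iii) take a tangent flow $\tilde{\mathcal K}$ at $(x_0,t)$, which is a self-shrinker $\Sigma$ (as a varifold, possibly with multiplicity) for negative times, and with an undercurrent $\tilde T$ obtained as a flat limit of the rescaled currents $D_{\rho_i}(T - (x_0,t))$; (iv) observe that since $x_0$ lies in the "cancelled" part, the undercurrent $\tilde T_{-1}$ has density zero on a positive-measure subset of $\spt \mu_{\tilde{\mathcal K}, -1}$ where the varifold $\tilde V_{-1}$ has even multiplicity $\geq 2$; (v) compute $\Theta_{(x_0,t)}(\mathcal K) = \Theta_{(0,0)}(\tilde{\mathcal K}) = F(\mu_{\tilde{\mathcal K},-1}) \geq 2$ since the Gaussian area of the multiplicity-$\geq 2$ self-shrinker is at least $2$; (vi) combine with $\Theta_{(x_0,t)}(\mathcal K) \leq \lambda(\mathcal K) < 2$ to conclude $W_t = 0$, hence $\mu_{T_t} = \mu_t$ for a.e. $t$, which is the matching condition.

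The main obstacle I anticipate is step (iv)–(v): making rigorous that the "extra" varifold $W$ forces the Gaussian density at the relevant point to jump to $\geq 2$ rather than merely being $\geq 1$. One has to be careful that the defect $W_t$ is concentrated where the current cancels, track how the compatibility relation $V_t = V(T_t) + 2W_t$ passes to tangent flows under simultaneous flat convergence of currents and weak-$*$/varifold convergence of the Brakke flow, and ensure the tangent flow's undercurrent is still compatible with its overflow (so the multiplicity really is even, $\geq 2$, and not just $\geq 1$, on the cancelled part). I would lean on White's structure results in \cite{W1} for the compatibility of enhanced motions and on the fact that self-shrinkers, being critical for $F$ with $F \geq 1$ and with equality only for the hyperplane, have Gaussian area $\geq 1$, so an even-multiplicity self-shrinker has $F \geq 2$. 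Once the density gap is established, the contradiction with the entropy bound $\lambda(\mathcal K) < 2$ (via $\Theta \leq \lambda$) is immediate, and a.e.-$t$ matching follows.
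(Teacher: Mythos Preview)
Your overall strategy---show that $W_t\neq 0$ forces a density $\geq 2$, contradicting $\lambda(\mathcal K)<2$---is exactly the paper's. But you route the argument through tangent flows and Huisken's \emph{parabolic} density, and the obstacle you flag at steps (iv)--(v) is genuine: passing the compatibility relation $V=V(T)+2W$ to a tangent flow, and then arguing that a self-shrinking varifold with multiplicity $\geq 2$ on merely a positive-measure subset has $F\geq 2$, would each require real additional work (the latter is not even obviously true as stated).

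The paper sidesteps all of this by staying at a \emph{single time slice} and using the entropy $\lambda(\mu_t)$ of the measure itself rather than the Huisken density of the flow. The only lemma needed is that any nonzero $W\in\mathbf{IV}_n(\mathbb R^{n+1})$ has $\lambda(W)\geq 1$: pick a rectifiability point of $W$ and send the scale $\rho\to\infty$ in $F(\rho(W-x_0))$, which converges to the integer spatial density $\geq 1$. Then for a.e.\ $t$ one has the chain of inequalities
\[
2\lambda(W_t)\ \leq\ \lambda(V_{T_t}+2W_t)\ =\ \lambda(\mu_t)\ \leq\ \lambda(\mathcal K)\ <\ 2,
\]
forcing $W_t=0$. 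No tangent flows, no self-shrinkers, no dynamics---just monotonicity of $\lambda$ under adding mass and a spatial blow-up. The paper also spends the first half of its proof carefully deriving the slicewise compatibility $V_{\mu_t}=V_{T_t}+2W_t$ from the sequence via White's compatibility lemma (Lemma~\ref{thm5}) together with a slicing argument, rather than simply reading it off the enhanced-motion hypothesis on the limit as you do; given how the theorem is stated your shortcut is defensible, but the explicit derivation is what is actually invoked in the applications.
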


To prove the theorem above, we need a lemma about compatibility  of integral currents and varifolds by White \cite{W1}.

\begin{lem} \label{thm5}
(Theorem 3.6 of \cite{W1}) Suppose $V_i$ is a sequence of integer multiplicity rectifiable varifolds that converge with locally bounded first variation to an integer multiplicity rectifiable varifold, V. And $T_i$ is a sequence of integral currents such that $V_i$ and $T_i$ are compatible. If the boundaries, $\partial T_i$, converge (in the integral flat topology) to a limit integral flat chain, then there is a subsequence $i(k)$ such that $T_{i(k)}$ converge to an integral current T. Furthermore V and T must then be compatible.
\end{lem}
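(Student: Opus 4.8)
\emph{Proof plan.} The argument has three movements: produce a limit current by compactness; rephrase ``compatible'' as two pointwise conditions; and show both conditions pass to the limit, the second being the crux. First, by compatibility $V_i=V(T_i)+2W_i$ with $W_i$ an integral varifold, so $\mu_{T_i}\le\mu_{V_i}$ as measures; since $V_i\to V$ as varifolds the masses $\mu_{V_i}(U)$ are bounded uniformly in $i$ on each $U\Subset\R^{n+1}$, hence so are $\mu_{T_i}(U)$. Together with the hypothesis that $\partial T_i$ converges in the integral flat topology (which supplies the remaining local mass bound on the boundaries, and is trivial in the enhanced-motion application where $\partial T_i\equiv0$), the Federer--Fleming compactness and closure theorems yield a subsequence $T_{i(k)}\to T$ in the integral flat topology with $T$ an integral current and $\partial T=\lim_i\partial T_i$; pass to this subsequence, and note $\mu_T\le\liminf_k\mu_{T_{i(k)}}$ by lower semicontinuity of mass. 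Next, one checks directly from the definitions that an integral varifold $V$ and an integral current $T$ are compatible if and only if $(a)$ $\mu_T\le\mu_V$, and $(b)$ for $\mathcal H^n$-a.e.\ $x$ the density $\Theta^n(\mu_V-\mu_T,x)$ is a nonnegative even integer, equivalently $\Theta^n(\mu_V,x)\equiv\Theta^n(\mu_T,x)\pmod2$ --- indeed $(a)$ forces the rectifiable carrier of $T$ into that of $V$ with matching approximate tangent planes $\mathcal H^n$-a.e., and then $(b)$ is exactly what makes the varifold $W$ with multiplicity $\tfrac12\Theta^n(\mu_V-\mu_T,\cdot)$ integral with $V=V(T)+2W$. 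Condition $(a)$ for the limit is easy: passing to a further subsequence $\mu_{T_{i(k)}}\to\sigma$ weakly; from $\mu_{T_i}\le\mu_{V_i}$ and $\mu_{V_i}\to\mu_V$ we get $\sigma\le\mu_V$, and from lower semicontinuity $\mu_T\le\sigma$, so $\mu_T\le\mu_V$.

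The real work is $(b)$. It is enough to prove it after blowing up: at $\mathcal H^n$-a.e.\ $x$ in the carrier of $V$ a diagonal argument (fixed rescalings are continuous for varifold and for flat convergence; $V$ and $T$ have $\mathcal H^n$-a.e.\ a unique tangent plane of the respective multiplicities $\theta:=\Theta^n(\mu_V,x)$ and $m:=\Theta^n(\mu_T,x)$) gives scales $r_j\to0$ and indices $i_j\to\infty$ with $r_j^{-1}(V_{i_j}-x)\to\theta\,|P|$ as varifolds and $r_j^{-1}(T_{i_j}-x)\to m\,[P]$ as currents for a fixed $n$-plane $P$, the rescaled pairs still compatible, and the scale-invariant density-ratio bound $\sup_{i,z,s}s^{-n}\mu_{V_i}(B_s(z))<\infty$ --- a consequence of the local first-variation bound via Huisken/Allard monotonicity --- inherited by them. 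One is reduced to the model claim: \emph{if integral varifolds $U_i\to\theta\,|P|$ with a uniform density-ratio bound, and integral currents $S_i\to m\,[P]$, are compatible in a cylinder over $P$, then $m\equiv\theta\pmod2$.} To prove it, slice by the orthogonal projection $\pi_P$: for a.e.\ $y\in P$ the slice $\langle S_i,\pi_P,y\rangle$ is a $0$-dimensional integral current whose evaluation at the constant $1$ has the same parity as the fibre multiplicity $\Sigma_i(y):=\sum_{z\in \pi_P^{-1}(y)}\Theta^n(\mu_{U_i},z)$ --- this uses compatibility, since at every point of the carrier of $U_i$ its density is congruent mod $2$ to the signed multiplicity of $S_i$, and one sums along the fibre. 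As the current slices converge in flat norm for a.e.\ $y$ and are $0$-dimensional, the integer $\langle S_i,\pi_P,y\rangle(1)$ is eventually constant in $i$ and congruent to $m\pmod2$ (it equals the corresponding evaluation for the limit plane), so $\Sigma_i(y)\equiv m\pmod2$ for a.e.\ $y$ and large $i$. On the other hand the density-ratio bound prevents mass of $U_i$ from concentrating, so $\mu_{U_i}\to\theta\,\mathcal H^n\lfloor P$ with no loss of mass and with no ever-finer oscillation of the integer fibre multiplicities (a jump of $\Sigma_i$ across a set of $\mathcal H^{n-1}$-measure $L_i$ forces a fold of $U_i$ contributing $\gtrsim L_i$ to its first variation at the corresponding scale), whence $\Sigma_i\to\theta$ in measure and $\Sigma_i\equiv\theta\pmod2$ on most of the footprint for large $i$. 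Comparing the two parities on the large set where $\Sigma_i=\theta$ gives $m\equiv\theta\pmod2$; so $(b)$ holds for $(V,T)$, and by the reformulation $V$ and $T$ are compatible.

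The main obstacle is this last step $(b)$. Since $T\mapsto V(T)$ is \emph{not} continuous --- the blow-down of a Grim Reaper translator is a multiplicity-$2$ plane whose underlying current vanishes --- one cannot simply pass $V(T_i)\to V(T)$ to the limit; varifold mass can genuinely be lost. The point is that the mod-$2$ information linking $T_i$ to $V_i$ \emph{is} stable under flat limits, while the local first-variation bound on the $V_i$ is precisely what keeps their multiplicities from degenerating, so the parity identity can be carried across. Without it (even without just the density-ratio consequence it provides) the argument collapses, in the spirit of the Grim Reaper counterexample to the matching question.
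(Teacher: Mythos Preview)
Note first that the paper does not prove this lemma: it is quoted as Theorem~3.6 of~\cite{W1} and used as a black box in the proof of Theorem~\ref{thm4}. So your proposal should be read as an attempt to reprove White's result, not as a comparison with anything in the present paper.

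Your reformulation of compatibility as the conjunction of $(a)$~$\mu_T\le\mu_V$ and $(b)$~$\Theta^n(\mu_V,\cdot)\equiv\Theta^n(\mu_T,\cdot)\pmod 2$ $\mathcal H^n$-a.e.\ is correct; the compactness step, condition~$(a)$, the blow-up/diagonal reduction, and the slicing identity $\Sigma_i(y)\equiv\langle S_i,\pi_P,y\rangle(1)\pmod2$ are all sound and do capture the mod-$2$ mechanism behind the result.

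The gap is the model claim itself: it is false under the hypotheses you list. Take an $n$-plane $P\subset\R^{n+1}$ and let $U_j$ be the integral varifold supported on $P$ with multiplicity~$3$ on the strips $\{x_1\in[2k/j,(2k+1)/j]\}$ and multiplicity~$1$ on the complementary strips. Then $U_j\to2\,|P|$ as varifolds with density ratio uniformly~$\le3$, the constant current $S_j=[P]$ is compatible with each $U_j$ (the difference in multiplicity is $2$ or $0$), and $S_j\to1\cdot[P]$; yet $m=1\not\equiv2=\theta\pmod2$. Here $\Sigma_j\in\{1,3\}$ never takes the value~$\theta$, so the ``large set where $\Sigma_i=\theta$'' is empty and your comparison step is vacuous. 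Your appeal to first variation --- a jump of $\Sigma_i$ forces a fold of $U_i$ which costs first variation --- correctly identifies what rules this example out (here $\|\delta U_j\|(B_1)\sim j$), but you have no first-variation bound on the $U_j$: under the rescaling $U_j=r_j^{-1}(V_{i_j}-x)$ one has $\|\delta U_j\|(B_1)=r_j^{\,1-n}\|\delta V_{i_j}\|(B_{r_j}(x))$, and a merely \emph{locally} bounded $\|\delta V_i\|$ gives no control on this as $r_j\to0$ (for $n\ge2$). The blow-up thus discards exactly the hypothesis needed to exclude the counterexample, and the model claim would have to be strengthened by some remnant of the first-variation bound that your diagonal argument does not supply. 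White's own proof in~\cite{W1} avoids the blow-up altogether: he characterizes compatibility as $[T]_2=[V]_2$, where $[V]_2$ is the mod-$2$ flat chain carried by the odd-multiplicity part of~$V$, and the substantive theorem of that paper is the continuity of $V\mapsto[V]_2$ under varifold convergence with locally bounded first variation --- established globally, at the original scale where the bound is actually available.
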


\begin{proof}(of Theorem \ref{thm4})

We have $\mathcal K_i=\{\mu_{i,t}\}\rightarrow\{\mu_t\}=\mathcal K$ as Brakke flows and $T_i\rightarrow T$ as currents. 

By Brakke's convergence, there is a set $S_1$ with $\mathcal L^1(S_1)=0$ (where $\mathcal L^1$ denote the Lebesgue measure), for all $t\in I \setminus S_1$, there is a subsequence $i(k)_t$, depending on t, such that, $V_{i(k),t}\rightarrow V_{\mu_t}$ with locally bounded first variation (Lemma 4.3 of \cite{W1} ).

By a slicing lemma of White (pp. 208 of \cite{W4}), there is a another set $S_2$ with $\mathcal L^1(S_2)=0$, for all $t\in I\setminus(S_1\cup S_2)$, there is a further subsequence $i(k(j))$, also depending on t, such that $T_{i(k(j)),t}\rightarrow T_t$.

Moreover, because $(T_i,\mathcal K_i)$ are matching motions, there is a set $S_3$ with $\mathcal L^1(S_3)=0$, for all $t\in I\setminus (S_1\cup S_2\cup S_3)$, $\mu_{T_{i(k(j)),t}}=\mu_{i(k(j)),t}$ and $V_{T_{i(k(j)),t}}=V_{i(k(j)),t}$. Namely, $T_{i(k(j)),t}$ and $V_{i(k(j)),t}$ are compatible.

By definition of matching motions we have $\partial T_{i(k(j)),t}=0$ for all t, so the condition of Lemma \ref{thm5} is satisfied. Thus, for each $t\in I\setminus(S_1\cup S_2\cup S_3)$, we can extract a further subsequence $i(k(j(l)))$ such that, $\lim_{i(k(j(l)))\rightarrow\infty}T_{i(k(j(l))),t}$ is compatible with $\lim_{i(k(j))\rightarrow\infty}V_{i(k(j)),t}=V_{\mu_t}$.

Since the limit of a subsequence must be same as the limit of original sequence, we have $\lim_{i(k(j(l)))\rightarrow\infty}T_{i(k(j(l))),t}=\lim_{i(k(j))\rightarrow\infty}T_{i(k(j)),t}=T_t$. 

And $T_t$ is compatible with $V_{\mu_t}$, namely
\begin{gather*}
V_{\mu_t}=V_{T_t}+2W_t
\end{gather*}

for $t\in I\setminus(S_1\cup S_2\cup S_3)$ and some $W_t\in\mathbf{IV}_n(\mathbb R^{n+1})$.

\begin{claim}\label{thm6}
For any $W\in\mathbf{IV}_n(\mathbb R^{n+1})$ and $W\neq0$, we have $\lambda(W)\geq1$
\end{claim}

\begin{proof}(of Claim \ref{thm6})

Because W is rectifiable, it is a.e. a $C^1$ submanifold with integer multiplicity. For such a point $x_0$ with $C^1$ submanifold structure, it has a tangent plane. And $\lim_{\rho\rightarrow\infty}F(\rho(\Sigma-x_0))$ will become the Euclidean density at this point, which is at least 1.
\end{proof}

Now we have for $t\in I\setminus(S_1\cup S_2\cup S_3)$
\begin{gather*}
2\lambda(W_t)\leq\lambda(V_{T_t}+2W_t)=\lambda(V_{\mu_t})\leq\lambda(\mathcal K)<2 
\end{gather*}
which forces $W_t=0$ by the Claim \ref{thm6} above.

So we have for a.e. t
\begin{gather*}
V_{\mu_t}=V_{T_t}
\end{gather*}
The limit is also a matching motion.

\end{proof}

\subsection{}

In order to get a matching motion from a generic surface, we will need another notion of set theoretic weak flow called the level-set flow. The mathematical theory of level-set flow was developed by Chen-Giga-Goto \cite{CGG} and Evans-Spruck \cite{ES1,ES2,ES3,ES4}. We follow the formulation of level-set flow of Evans-Spruck \cite{ES1}.

Let $\Gamma$ be a compact non-empty subset of $\mathbb R^{n+1}$. Select a continuous function $\mu_0$ so that $\Gamma=\{x:u_0(x)=0\}$ and there are constants $C,R>0$ so that
\begin{gather}
u_0=-C\:\:\:on\:\{x\in\mathbb R^{n+1}:|x|\geq R\}
\end{gather}
for some sufficiently large $R$. In particular, $\{u_0\geq a>-C\}$ is compact. In \cite{ES1}, Evans-Spruck established the existence and uniqueness of viscosity weak solutions to the initial value problem:
\begin{gather}\label{eq2}
\begin{cases}
u_t=\Sigma_{i,j=1}^{n+1}(\delta_{ij}-u_{x_i}u_{x_j}|Du|^{-2})u_{x_ix_j}\:\:\:on\:\mathbb R^{n+1}\times(0,\infty)\\
u=u_0\:\:\:on\:\mathbb R^{n+1}\times\{0\}
\end{cases}
\end{gather}

Setting $\Gamma_t=\{x:u(x,t)=0\}$, define $\{\Gamma_t\}_{t\geq0}$ to be the level-set flow of $\Gamma$. It is justified in \cite{ES1} that the $\{\Gamma_t\}$ is independent of the choice of $u_0$.

Level-set flow has a uniqueness property and an avoidance principle. But it may fatten up in later time, namely a level-set flow in $\mathbb R^{n+1}$ may develop some time slices that have non-zero (n+1)-dimensional Hausdorff measure. But we have the following genericity of non-fattening.

\begin{prop}\label{thm7}
(11.3 of \cite{I1}) For any closed hypersurface $\Sigma^n\subset\mathbb R^{n+1}$, and any $\epsilon>0$, and any given $k>0$, there is a small perturbation $\Sigma'$ of $\Sigma$, which is a graph $u$ over $\Sigma$ with $||u||_{C^k}<\epsilon$ and such that the level-set flow starting from $\Sigma'$ is non-fattening.
\end{prop}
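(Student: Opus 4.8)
The plan is to foliate a tubular neighbourhood of $\Sigma$ by smooth parallel hypersurfaces and to show that at most countably many leaves can have a fattening level-set flow. Fix a small $\delta_0>0$ and for $|s|<\delta_0$ let $\Sigma_s=\{x+s\nu(x):x\in\Sigma\}$ be the normal graph over $\Sigma$, where $\nu$ is a unit normal; for $\delta_0$ small enough these are smooth closed hypersurfaces foliating a neighbourhood of $\Sigma=\Sigma_0$, and $\Sigma_s$ is the graph over $\Sigma$ of a function $w_s$ with $\|w_s\|_{C^k}\to0$ as $s\to0$. Pick a continuous $u_0:\mathbb R^{n+1}\to\mathbb R$ with $u_0\equiv-C$ outside a large ball $B_{R_1}$ and with $\{u_0=s\}=\Sigma_s$ near $\Sigma$ for all $|s|<\delta_0$ (for instance a cutoff of the signed distance to $\Sigma$), and let $u$ be the Evans--Spruck viscosity solution of \eqref{eq2} with $u(\cdot,0)=u_0$. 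By the Chen--Giga--Goto / Evans--Spruck theory each level set $\Gamma_t^s:=\{x:u(x,t)=s\}$ is precisely the level-set flow of $\Sigma_s$, so it suffices to produce $s$ arbitrarily close to $0$ for which $\{\Gamma_t^s\}_{t\ge0}$ does not fatten.

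The key observation is a disjointness-and-counting argument. Set $F_s:=\{(x,t)\in\mathbb R^{n+1}\times(0,\infty):u(x,t)=s\}$; for distinct $s$ these sets are pairwise disjoint. Non-fattening of $\{\Gamma_t^s\}$ is equivalent to $F_s$ having empty interior (equivalently, $\{u<s\}$ and $\{u>s\}$ being dense in space-time), and if some slice $\Gamma_{t_0}^s$ had positive $\mathcal H^{n+1}$-measure then, since a fat slice persists with positive measure on a short time interval, $\mathcal H^{n+2}(F_s)>0$ and hence $\mathrm{int}(F_s)\ne\emptyset$. By the avoidance principle, comparison with the shrinking sphere $\partial B_{R_1}$ (which contains every $\Sigma_s$ and becomes extinct at some finite time $T_1$) confines each $F_s$ with $|s|<\delta_0$ to the fixed bounded region $Q:=\bar B_{R_1}\times[0,T_1]$. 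Since the open sets $\mathrm{int}(F_s)\subset Q$ are pairwise disjoint, picking a countable dense subset $\{q_i\}\subset Q$ and assigning to each $s$ with $\mathrm{int}(F_s)\ne\emptyset$ an index $i$ with $q_i\in\mathrm{int}(F_s)$ gives an injection; hence $\{s\in(-\delta_0,\delta_0):\{\Gamma_t^s\}\text{ fattens}\}$ is at most countable.

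Its complement accumulates at $0$, so we may choose $s^\ast\ne0$ with $|s^\ast|$ small enough that $\|w_{s^\ast}\|_{C^k}<\epsilon$ while $\{\Gamma_t^{s^\ast}\}$ is non-fattening; then $\Sigma':=\Sigma_{s^\ast}$ is the required perturbation. The one delicate point is the implication \emph{fattening slice} $\Rightarrow$ $\mathrm{int}(F_s)\ne\emptyset$, i.e.\ that a time slice of positive $\mathcal H^{n+1}$-measure cannot form or vanish instantaneously; this relies on finer properties of the Evans--Spruck solution and is where the substantive input of \cite{ES1,I1} enters. (If one instead adopts \emph{empty interior of $F_s$} as the definition of non-fattening, the counting argument above applies verbatim and no such input is needed.) The remaining ingredients --- smoothness and $C^k$-smallness of the leaves $\Sigma_s$, the identification of $\Gamma_t^s$ with the level-set flow of $\Sigma_s$, and the confinement of the $F_s$ to $Q$ --- are standard.
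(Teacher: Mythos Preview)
The paper does not supply its own proof of this proposition; it is quoted verbatim from Ilmanen \cite[11.3]{I1} and used as a black box. Your argument is essentially the one Ilmanen gives there: foliate a tubular neighbourhood of $\Sigma$ by the level sets $\Sigma_s=\{u_0=s\}$ of a single initial function, run the unique viscosity solution $u$, note that the space-time level sets $F_s=\{u=s\}$ are pairwise disjoint and confined (by comparison with a large shrinking sphere) to a common bounded region, and conclude that at most countably many of them can have nonempty interior. So your approach and Ilmanen's coincide.

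Your flagging of the definitional issue is accurate and worth keeping. Ilmanen's notion of non-fattening is precisely that the space-time track $\{u=s\}$ has empty interior (equivalently $\partial\{u>s\}=\partial\{u<s\}$), and for that notion the counting argument is immediate --- no further analytic input is required. The paper's informal description of fattening as ``some time slice has positive $\mathcal H^{n+1}$-measure'' is a consequence of space-time fattening but not obviously equivalent to it; however, what the paper actually \emph{uses} downstream (Theorem~\ref{thm8}, i.e.\ Ilmanen's Theorem~11.4) is the space-time non-fattening condition, so your parenthetical remark that the argument goes through cleanly under that definition is exactly the right resolution. The step you label ``delicate'' (a fat slice persists in time) is therefore not needed for the application in this paper.
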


A non-fattening level-set flow is a matching motion. (see \cite{I1} pp. 55)

\subsection{}

We will make use of the existence of a special kind of matching motion called the canonical boundary motion from these generic surfaces (see \cite{I1}, section 11). 

Using definitions from \cite{S}, $\partial^* E$ is the reduced boundary of $E$. If E is of locally finite perimeter, then $\mathcal H^n\lfloor\partial^* E\in\mathbf{IM}_n(\mathbb R^{n+1})$.

\begin{defn}
A $\mu\in\mathbf{IM}_n(\mathbb R^{n+1})$ is a compact boundary measure, if there is a bounded open non-empty subset $E\subset\mathbb R^{n+1}$ of locally finite perimeter so that $\spt(\mu)=\partial E$ and $\mu=\mathcal H^n\lfloor\partial^* E$. Such a set E is called the interior of $\mu$.
\end{defn}

Ilmanen synthesized both notions of weak flows and show that there is a canonical way to associate a Brakke flow to a level-set flow for a large class of initial sets. (\cite{I1}, section 11)

\begin{defn}
Given a compact boundary measure $\mu_0$ with interior $E_0$, a canonical boundary motion of $\mu_0$ is a pair $(E,\mathcal K)$ consisting of an open bounded subset $E$ of $\mathbb R^{n+1}\times\mathbb R^+$ of finite perimeter and a Brakke flow $\mathcal K=\{\mu_t\}_{t\geq t_0}$ so that:

(1) $E=\{(x,t):u(x,t)>0\}$, where u solves equation (\ref{eq2}) with $E_0=\{x:u_0(x)>0\}$ and $\partial E_0=\{x:u_0(x)=0\}$

(2) each $E_t=\{x:(x,t)\in E\}$ is of finite perimeter and $\mu_t=\mathcal H^n\lfloor\partial^*E_t$.
\end{defn}

In particular, a canonical boundary motion is a non-fattening level-set flow. Ilmanen proved the existence of canonical boundary motions (Theorem 11.4 of \cite{I1}). We need a weaker version of it

\begin{thm}\label{thm8}
(Theorem 11.4 of \cite{I1}) If $\Sigma^n\subset\mathbb R^{n+1}$ is a closed hypersurface such that the level-set flow is non-fattening, then there is a canonical boundary motion starting from $\Sigma$. In particular, it is a matching motion.
\end{thm}

The following uniqueness theorem of the flow of round sphere $\mathbb S^n\subset\mathbb R^{n+1}$ will be used in the proof of the main theorem (when applying Theorem \ref{thm3}, we need the limit flow to be regular). It was not explicitly stated in \cite{BW2}, but can be drawn as a corollary of what was proved in that paper.

\begin{thm}\label{thm9}
If $(T,\mathcal K)$ is a matching motion in $\mathbb R^{n+1}\times[-1,\infty)$, $\mathcal K=\{\mu_t\}_{t\in[-1,\infty)}$, $\lambda(\mathcal K)=\Lambda_n$. Suppose it is the limit of a sequence of compact canonical boundary motions: $\mathcal K=\lim\mathcal K_i$, with each $\mathcal K_i$ becoming extinct at $(0,0)\in\mathbb R^{n+1}\times[-1,\infty)$ , $\lambda(\mathcal K_i)\rightarrow\Lambda_n$, then $(T,\mathcal K)$ is the regular flow of a round n-sphere.
\end{thm}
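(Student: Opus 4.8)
The plan is to first identify the limit flow $\mathcal{K}$ as a smooth self-shrinker flow via a rigidity argument, and then upgrade this to show it is exactly the round sphere shrinking to a point. Since $\lambda(\mathcal{K}) = \Lambda_n = \lambda(\mathbb{S}^n)$ is the infimum of entropy among closed hypersurfaces (by Bernstein-Wang \cite{BW2} in the relevant dimension range, and Zhu \cite{Z} in general), the flow $\mathcal{K}$ lives at the minimal entropy threshold. First I would examine the behavior near the extinction time. Each $\mathcal{K}_i$ is a compact canonical boundary motion becoming extinct at $(0,0)$; by Huisken monotonicity and upper semi-continuity of Gaussian density, the density at the extinction point of the limit flow is at least $1$, so $\Theta_{(0,0)}(\mathcal{K}) \geq 1$ and a tangent flow exists there. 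This tangent flow is a self-shrinker with entropy $\leq \Lambda_n$. By the minimality of $\Lambda_n$ and the strict inequalities $\Lambda_n < \Lambda_{n-1} < \cdots$, the only self-shrinkers with entropy $\leq \Lambda_n$ that can arise as a compact shrinker are the round sphere $\mathbb{S}^n$ (entropy exactly $\Lambda_n$) — any non-spherical closed self-shrinker has strictly larger entropy, and any non-compact shrinker with a cylindrical or other factor has entropy $\geq \Lambda_{n-1} > \Lambda_n$, while a multiplicity-one plane would force density $1$ but contradicts compactness of the $\mathcal{K}_i$. Hence the tangent flow at $(0,0)$ is the shrinking round sphere.

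Next I would promote this tangent-flow identification to a statement about $\mathcal{K}$ itself near extinction. Because the tangent flow at $(0,0)$ is the multiplicity-one shrinking sphere, which is smooth, White's local regularity theorem (in the form of Proposition \ref{thm3}) shows that $\mathcal{K}$ is a smooth mean curvature flow in a backward parabolic neighborhood of $(0,0)$ and in fact, after parabolic rescaling, converges smoothly to the shrinking sphere. Thus for $t$ slightly less than $0$, $\mu_t$ is a smooth closed hypersurface close to a small round sphere. Now I run the flow backward from such a time slice. The key point is that $\mathcal{K}$ is a matching motion with $\lambda(\mathcal{K}) = \Lambda_n$, so every time slice $\mu_t$ has entropy exactly $\Lambda_n$ (it cannot be less, since any closed hypersurface has entropy $\geq \Lambda_n$, and $\mu_t$ is a closed hypersurface for $t$ near $0$; and it is $\leq \Lambda_n$ by hypothesis). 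A mean curvature flow all of whose time slices realize the minimal entropy $\Lambda_n$ must, by the equality case in Huisken's monotonicity formula, be self-similarly shrinking — the monotonicity quantity is constant in $t$, forcing $\mathbf{H} = \frac{1}{2t}\mathbf{x}^\perp$ at every point, i.e. each slice is a self-shrinker (up to the appropriate space-time translation/dilation). Combined with the previous paragraph, every slice is a round sphere, and the flow is the regular flow of a round $n$-sphere on $[-1,0)$; the avoidance/uniqueness for the shrinking sphere (and the characterization of the flow as matching, hence unique) extends this to all of $[-1,\infty)$, where the flow is empty after extinction.

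The main obstacle I expect is making rigorous the jump from "some time slices are round spheres" to "the whole matching motion is the smooth round-sphere flow," i.e. controlling the flow on the time interval $[-1,\infty)$ rather than just near the extinction time. Two subtleties arise: (i) one must ensure that the entropy of \emph{every} slice, not just a.e. slice or slices near extinction, equals $\Lambda_n$ — this uses lower semi-continuity of entropy along the flow together with monotonicity, plus the fact that for a matching motion the undercurrent time-slices $T_t$ are genuine closed hypersurface boundaries; and (ii) one must invoke uniqueness of the limit: since $\mathcal{K}$ arises as a limit of canonical boundary motions $\mathcal{K}_i$ extinct at $(0,0)$ with $\lambda(\mathcal{K}_i) \to \Lambda_n$, and the smooth round sphere flow is itself a canonical boundary motion with the same extinction point, one concludes $\mathcal{K}$ coincides with it by the uniqueness of smooth mean curvature flow once regularity is established on the whole interval via Proposition \ref{thm3} (the hypothesis that the limit flow be regular is exactly what lets us apply it). The equality case of Huisken's monotonicity — asserting that a flow with constant Gaussian density is self-shrinking — is standard, but writing it out carefully for Brakke/matching motions requires citing the appropriate rigidity statement (e.g. from \cite{H} or \cite{CM}) rather than reproving it.
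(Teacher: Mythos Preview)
Your overall strategy---identify the tangent flow at the extinction point $(0,0)$ as a round sphere, then use constancy of the Gaussian density together with the rigidity in Huisken's monotonicity to conclude the entire flow is self-shrinking---matches the paper's. The second half of your argument (constant entropy forces self-shrinking, hence the round sphere) is essentially what the paper does as well.

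The gap is in the first half, where you classify the possible tangent flows at $(0,0)$. You assert that ``any non-compact shrinker with a cylindrical or other factor has entropy $\geq \Lambda_{n-1} > \Lambda_n$''. This only covers shrinkers that split off a Euclidean factor; it says nothing about asymptotically conical self-shrinkers, and in dimensions $n\geq 3$ it is \emph{not known} that every non-flat, non-compact self-shrinker has entropy strictly above $\Lambda_n$. (This is precisely the content of \cite{BW3} in dimension $2$, and the paper explicitly notes that the analogous result is unavailable for $n\geq 3$.) So your claimed trichotomy---plane, sphere, or entropy $\geq \Lambda_{n-1}$---is not established, and the tangent-flow identification does not go through as written. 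Your sentence ruling out the plane (``contradicts compactness of the $\mathcal{K}_i$'') is also too quick: the real reason a quasi-static plane is excluded is that such a flow, being a plane for $t<0$ and empty for $t\geq 0$, cannot be a matching motion, while the tangent flow must be one by Theorem~\ref{thm4}.

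The paper sidesteps the missing classification entirely by importing structural results from \cite{BW2}: the extinction points of the canonical boundary motions $\mathcal{K}_i$ are \emph{collapsed} (Lemma~5.1 of \cite{BW2}), Theorem~1.3 of \cite{BW2} then forces their tangent flows to be round spheres once the entropy is close to $\Lambda_n$, and Proposition~4.10 of \cite{BW2} says collapsedness is closed under limits, so the limit $\mathcal{K}$ also has a collapsed extinction point with round-sphere tangent flow. This is the missing idea: rather than classify all low-entropy self-shrinkers, one uses the collapsed condition to force compactness of the tangent flow, after which the closed-hypersurface entropy bound of \cite{BW2}/\cite{Z} finishes the identification.
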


\begin{proof}
By Lemma 5.1 of \cite{BW2}, the extinction time of $\mathcal K_i$ are collapsed (Definition 4.9 of \cite{BW2}). By Theorem 1.3 of the same paper \cite{BW2}, when i is large enough, the only possible tangent flow at the extinction time are round spheres.  Proposition 4.10 of \cite{BW2} implies that being collapsed is a closed condition, so the extinction time of $\mathcal K$ is also collapsed, which can only be a round sphere by the entropy bound.

Since the entropy is monotonic non-increasing under the flow, we conclude that it is constant over time and equal to that of a round sphere and thus a self-shrinker by monotonicity. Combining with the fact that it's extinct at a round-sphere tangent flow at $(0,0)$ in space-time, it must be the flow of a round shrinking n-sphere.

\end{proof}

\section{Properties of matching motions with low entropy in $\mathbb R^{n+1}$}
Some of the results in this section can be made stronger by weakening the entropy bounds in the conditions, but the versions here are enough for our purpose.

We define the set of self-shrinking measures on $\mathbb R^{n+1}$ by

\begin{gather*}
\mathcal{SM}_n=\{\mu\in\mathbf{IM}_n(\mathbb R^{n+1}): V_\mu \:\text{is stationary for Gaussian area F}\}
\end{gather*}

Denote by 
\begin{gather*}
\mathcal{CSM}_n = \{\mu\in\mathcal{SM}_n:\mu\: \text{has compact support}\}
\end{gather*} 

Further, given $\Lambda>0$, set
\begin{gather*}
\mathcal{SM}_n(\Lambda)=\{\mu\in\mathcal{SM}_n: \lambda(\mu)<\Lambda\}\\
\mathcal{CSM}_n(\Lambda)=\mathcal{CSM}_n\cap\mathcal{SM}_n(\Lambda)
\end{gather*}

According to Proposition 4.3 of \cite{BW2}, any $\mu\in\mathcal{CSM}_n(\frac{3}{2})$ is a compact boundary measure. In particular, if $n\geq3$, any $\mu\in\mathcal{CSM}_n(\Lambda_{n-1})$ is a compact boundary measure. As the corresponding results in dimension 2 is already known, we restrict ourselves to $n\geq3$ in this section.

For $\mu\in\mathcal{SM}_n(\mathbb R^{n+1})$, we call $\mathcal K=\{\mu_t\}_{t\in\mathbb R}$ an associated Brakke flow to $\mu$ if $\mu_t=\sqrt{-t}\mu$ for $t<0$. An associated matching motion to a self-shrinking measure is one whose associated overflow is an associated Brakke flow. By Theorem \ref{thm4}, any tangent flow of a matching motion with entropy bounded by 2 is an associated matching motion to a self-shrinking measure.

\begin{lem}\label{thm10}
For $\mu\in\mathcal{SM}_n(\mathbb R^{n+1})$ with $\lambda(\mu)<\infty$, let $\mathcal K$ be an associated Brakke flow to $\mu$. If there is a $y\in\mathbb R^{n+1}\setminus\{0\}$ with $\Theta_{(y,0)}\geq1$ and $\mathcal T$ is a tangent flow of $\mathcal K$ at $(y,0)$, then $\mathcal T$ splits off a line backward in time, that is $ \mathcal T_{t\leq0} = \{\tilde\mu_t\}_{t\leq0}= \{\nu_t\times\mathbb R\}_{t\leq0}$, for some $\nu_t\in\mathbf{IM}_{n-1}(\mathbb R^n)$ and $\nu_{-1}\in\mathcal SM_{n-1}(\mathbb R^n)$.
\end{lem}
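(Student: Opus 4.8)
The plan is to exploit the fact that $\mathcal{K}$ is a self-shrinker flow, so its spatial structure is rigid, and to use Huisken's monotonicity together with the rigidity-of-tangent-flows machinery of Colding–Minicozzi. First I would record the standard relation between Huisken density at a space-time point of a self-shrinker flow and the Gaussian density (F-functional) of the self-shrinker itself: since $\mu_t = \sqrt{-t}\,\mu$ for $t<0$, the monotone quantity $\int \rho(x-y, t)\, d\mu_t$ evaluated for the shifted flow at the point $(y,0)$ with $y \neq 0$ can be computed, and the hypothesis $\Theta_{(y,0)} \geq 1$ means the density is at least $1$. The key classical input is that for a self-shrinker, $\Theta_{(y,0)} = F(\mu - \text{(something)})$-type identity; more precisely, by the computation in \cite{CM} (the ``$F_{y,0}$'' drop), the density at $(y,0)$ equals $1$ exactly when $y$ lies in the axis of translation-invariance, and in general the inequality $\Theta_{(y,0)} \geq 1$ forces, along the monotonicity, that the tangent flow $\mathcal{T}$ at $(y,0)$ has a point of maximal density achieved on a whole half-line.

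Second, and this is the real mechanism: a tangent flow $\mathcal{T}$ is itself (for $t \le 0$) an associated Brakke flow to a self-shrinking measure $\tilde\mu_{-1} \in \mathcal{SM}_n(\mathbb{R}^{n+1})$, by Ilmanen's tangent flow existence (\cite{I2} Lemma 8) and Theorem \ref{thm4}. The statement that $\mathcal{T}$ ``splits off a line backward in time'' is equivalent to saying the self-shrinker $\tilde\mu_{-1}$ is invariant under translation in some direction $e \in \mathbb{S}^n$, i.e. $\tilde\mu_{-1} = \nu_{-1} \times \mathbb{R}$ with $\nu_{-1} \in \mathcal{SM}_{n-1}(\mathbb{R}^n)$. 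To obtain this splitting I would argue: the point $(y,0)$ with $y \ne 0$ and $\Theta_{(y,0)} \ge 1$, combined with $\Theta_{(0,0)} \ge 1$ (the flow reaches the origin as its natural extinction/shrinking point), gives \emph{two distinct} points in the time slice $t=0^-$ at which Huisken density is $\ge 1$. By Huisken's monotonicity formula applied to $\mathcal{T}$ and the rigidity case of equality, having maximal Gaussian density on the segment joining these two points (or, after rescaling to the tangent flow, on an entire line) forces the dilation-field $\mathbf{H} - \frac{1}{2}\mathbf{x}^\perp$ to vanish in the direction of that line, i.e. translational invariance. This is exactly the standard ``dropping/splitting'' lemma for self-shrinkers: density $\geq 1$ at two points of a shrinker implies it splits off the line through those points; I would cite the relevant statement (it appears in \cite{CM} and is used throughout \cite{BW2}).

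Concretely, the steps in order: (i) show $\mathcal{T}$ exists and is, for $t\le 0$, an associated flow to some $\nu \in \mathcal{SM}_n$ with $\lambda(\nu) \le \lambda(\mu) < \infty$ (Ilmanen, Theorem \ref{thm4}); (ii) locate within $\mathcal{T}$ two distinct space points of Huisken density $\ge 1$ at time $0^-$ — one coming from the blow-up point itself, one coming from a rescaled image of the original point $0$ versus $y$, using that the blow-up is centered at $y$ while the shrinker flow $\mathcal{K}$ contracts toward $0$; (iii) invoke the monotonicity/rigidity dichotomy: either the two densities are each exactly $1$ on the whole connecting segment — impossible unless the measure is a multiplicity-one plane containing the segment, handled directly — or the drop forces $\mathbf{x}^\perp \cdot e \equiv 0$ for $e$ the direction of the line, giving $\nu = \nu_{-1} \times \mathbb{R}$; (iv) conclude $\nu_{-1} \in \mathcal{SM}_{n-1}(\mathbb{R}^n)$ with the same entropy bound, since $\lambda(\nu_{-1}\times\mathbb{R}) = \lambda(\nu_{-1})$.

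The main obstacle I anticipate is step (ii): carefully verifying that the blow-up at $(y,0)$ with $y\ne 0$ genuinely ``sees'' a second high-density point and not just a single one, which requires tracking how the parabolic rescaling $D_{\rho_i}(\mathcal{K}-(y,0))$ moves the natural shrinking center $0$ of $\mathcal{K}$ — under rescaling by $\rho_i \to \infty$ the point $0$ is sent to $\rho_i(-y) \to \infty$ in space, so one must instead argue that the tangent flow at $(y,0)$ is \emph{non-compact} and, via the monotonicity comparing densities at $(y, 0)$ and along the flow, that it carries a line of symmetry. The cleanest route is probably to observe that $\Theta_{(y,0)}(\mathcal{K}) \le \Theta_{(0,0)}(\mathcal{K}) = \lambda(\mu)$ with the gap controlled, invoke Lemma 5.1 / the splitting results of \cite{BW2} verbatim, and phrase the argument entirely at the level of self-shrinkers rather than flows. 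I would want to double-check the sign/normalization in Huisken's density formula as stated in the excerpt when doing this.
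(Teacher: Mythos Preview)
Your proposal has a genuine gap, and it is exactly the one you flag in your final paragraph: step (ii) does not go through. After centering at $(y,0)$ and dilating by $\rho_i\to\infty$, the original shrinking center $0$ of $\mathcal K$ is pushed off to spatial infinity, so in the tangent flow $\mathcal T$ you never actually produce a \emph{second} finite point of Huisken density $\ge 1$. Your attempted rescue --- ``invoke the splitting results of \cite{BW2} verbatim'' --- is not a proof but a hope that someone else has already done the work; and the monotonicity/rigidity dichotomy you sketch in (iii) (``two points of density $\ge 1$ on a shrinker force a splitting'') is not a standard lemma in that form and would itself need justification.

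The paper's argument bypasses all of this with a one-line computation that you are close to but never write down. The point is that backward self-similarity of $\mathcal K$ means $D_\alpha\mathcal K_{t\le 0}=\mathcal K_{t\le 0}$ for every $\alpha>0$; in particular dilating about $0$ and then recentering at $y$ is the same as recentering at $\alpha y$. Feeding this into the defining limit $\mathcal T=\lim D_{\rho_i}(\mathcal K-(y,0))$ gives, for any $\tau\in\mathbb R$,
\[
\mathcal T_{t\le 0}-(\tau y,0)
=\lim_{i}D_{\rho_i}\bigl(\mathcal K_{t\le 0}-\bigl((1+\tfrac{\tau}{\rho_i})y,0\bigr)\bigr)
=\lim_{i}D_{\rho_i(1+\tau/\rho_i)}\bigl(\mathcal K_{t\le 0}-(y,0)\bigr)
=\mathcal T_{t\le 0},
\]
so $\mathcal T_{t\le 0}$ is translation-invariant in the direction of $y$. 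That is the entire proof. The mechanism is purely kinematic: the scale-invariance of the shrinker at the origin becomes, after blow-up at the off-center point $y$, a translation-invariance of the tangent flow along $y$. No density comparison, no rigidity case of monotonicity, and no appeal to \cite{CM} or \cite{BW2} is needed.
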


\begin{proof}
For the $y\neq0$ with $\Theta_{(y,0)}\geq1$, and $\mathcal T$ being a tangent flow at $(y,0)$, there exists a sequence $\rho_i\rightarrow\infty$ such that $\rho_i(\mathcal K-(y,0))\rightarrow \mathcal T$. By the self-similarity of $\mathcal K$, we have, for any $\tau\in\mathbb R$

\begin{gather*}
\mathcal T_{t\leq0}-(\tau y,0) \\
=\lim_{i\rightarrow\infty}D_{\rho_i} (\mathcal K_{t\leq0}-(y+\frac{1}{\rho_i}\tau y,0)) \\
=\lim_{i\rightarrow\infty} D_{\rho_i(1+\frac{1}{\rho_i}\tau)}[D_{(1+\frac{1}{\rho_i}\tau)^{-1}}\mathcal K_{t\leq0} - (y,0)] \\
=\lim_{i\rightarrow\infty} D_{\rho_i(1+\frac{1}{\rho_i}\tau)} (\mathcal K_{t\leq0} -(y,0))\\
=\lim_{i\rightarrow\infty} D_{\rho_i} (\mathcal K_{t\leq0} -(y,0)) \\
=\mathcal T_{t\leq0}
\end{gather*}

where we used the fact that $\lim_{i\rightarrow\infty}(1+\frac{1}{\rho_i}\tau)=1$ and the backward self-similarity of $\mathcal K$.

Since $\tau$ is arbitrary, we conclude that the tangent flow splits off a line in the direction of $y$ backward in time.
\end{proof}

\begin{lem}\label{thm12}
If $(\tilde T,\mathcal {\tilde K})$ is an associated matching motion of an asymptotic conical self-shrinker $\Sigma^3$ that is an tangent flow of a matching motion with entropy less than $2$, then it cannot be extinct at time 0.
\end{lem}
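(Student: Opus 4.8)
The plan is to argue by contradiction. Suppose $(\tilde T,\mathcal{\tilde K})$ is an associated matching motion of an asymptotically conical self-shrinker $\Sigma^3 \subset \mathbb R^4$ which arises as a tangent flow of a matching motion $(T,\mathcal K)$ with $\lambda(\mathcal K)<2$, and suppose for contradiction that $\mathcal{\tilde K}=\{\tilde\mu_t\}$ becomes extinct at time $0$, i.e.\ $\tilde\mu_t=0$ for $t>0$. Since $\mathcal{\tilde K}$ is a tangent flow of $\mathcal K$, we have $\lambda(\mathcal{\tilde K}) \le \lambda(\mathcal K) < 2$, so in particular $\lambda(\Sigma^3)<2 = \Lambda_1$, and by Stone's inequality $\lambda(\Sigma^3) < \Lambda_1$ puts us below the first generalized cylinder. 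The first step is to understand what "extinct at time $0$" means for a self-shrinking flow: the associated Brakke flow to $\Sigma$ satisfies $\tilde\mu_t = \sqrt{-t}\,\mu_\Sigma$ for $t<0$, so if additionally $\tilde\mu_t = 0$ for $t>0$, then as $t\to 0^-$ the flow shrinks to a point (the origin), while at the instant $t=0$ and afterward it is empty. Thus $(\tilde T,\mathcal{\tilde K})$ restricted to a neighborhood of the extinction time is exactly the shrinking flow of $\Sigma$ becoming extinct at $(0,0)$.

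Next I would exploit the conical asymptotics together with the matching-motion structure. Because $\Sigma$ is asymptotically conical, it has a well-defined cone at infinity $\mathcal C$, and by the density/monotonicity machinery the flow $\mathcal{\tilde K}$ extended past the extinction time cannot simply vanish: the key point is that a matching motion has an \emph{undercurrent} $\tilde T \in \mathbf I_{n+1}$ whose time slices $\tilde T_t$ are integral cycles varying continuously in the flat topology, and $\partial(\tilde T_{t\ge s}) = \tilde T_s$. If $\Sigma$ is asymptotically conical and non-compact, then $\tilde T_t$ for $t<0$ is a non-compact cycle (the "inside" of $\Sigma$), which cannot flatly converge to $0$ as $t\to 0^-$ unless the regions escape to infinity — but a shrinking self-shrinker flow has its compact part shrinking to the origin while the conical ends persist. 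More precisely, I would show that for the associated flow of an asymptotically conical shrinker, at times $t>0$ the flow must continue as a non-trivial flow coming out of the cone $\mathcal C$ (this is the standard picture: an asymptotically conical shrinker "opens up" forward in time into a flow asymptotic to $\mathcal C$), so $\tilde\mu_t \ne 0$ for $t>0$, contradicting extinction. The cleanest route to a contradiction is: continuity of $t\mapsto \tilde T_t$ in the flat topology plus the fact that $\spt\tilde\mu_t \supset \spt\partial^*$(interior) forces the support to be asymptotic to the fixed cone $\mathcal C$ for all $t$ near $0$, including $t>0$; since $\mathcal C\ne\{0\}$ (the shrinker is non-compact), the forward slices are non-empty.

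The step I expect to be the main obstacle is making rigorous the claim that an asymptotically conical self-shrinker's associated matching motion genuinely continues forward in time with non-empty slices, rather than ceasing to exist. One must rule out the possibility that the matching-motion structure "loses" the conical ends at $t=0$. Here I would lean on the avoidance/barrier principle and the monotonicity formula: comparing with large spheres and with the static cone $\mathcal C$ (which, being a cone, is "eternal" in a weak sense for the level-set flow), one sees that $\spt\tilde\mu_t$ must remain within bounded Hausdorff distance of $\mathcal C$ outside a large ball, uniformly for $t$ in a neighborhood of $0$; in particular $\spt\tilde\mu_t\ne\emptyset$ for small $t>0$. An alternative and perhaps more robust argument: if the flow were extinct at $(0,0)$, then $(0,0)$ would be a point of Gaussian density $\Theta_{(0,0)}(\mathcal{\tilde K}) = \lambda(\Sigma) < 2$ with the flow smooth and empty afterward; applying Theorem \ref{thm9}-type rigidity (or directly the classification of shrinkers that are extinct at a point) would force $\Sigma$ to be compact — contradicting that it is conical and non-compact. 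Either way, the crux is upgrading the soft flat-continuity/density information into the concrete statement that the conical ends of $\Sigma$ survive into positive time.
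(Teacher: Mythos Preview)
Your core intuition—that the non-compact conical geometry is incompatible with extinction, and that flat-continuity of the undercurrent is the mechanism—is on the right track, but the write-up has a real error and the argument is never closed.

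The error: you assert that as $t\to 0^-$ the flow $\sqrt{-t}\,\Sigma$ ``shrinks to a point (the origin).'' For an asymptotically conical (hence non-compact) shrinker this is false: $\sqrt{-t}\,\Sigma$ converges to the asymptotic cone $\mathcal C$, not to a point. You immediately contradict yourself a few lines later by saying the conical ends persist, so the picture you are working with is inconsistent. This matters because the whole argument rests on identifying what actually sits at $t=0$.

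The gaps: you offer three routes and complete none of them. For the flat-continuity route you never establish that $\tilde T_t\to[\mathcal C]\ne 0$ as $t\to 0^-$ (you say this is the ``main obstacle'' and leave it). The barrier argument with the cone is only gestured at. The ``rigidity forces $\Sigma$ compact'' line is unjustified—Theorem \ref{thm9} requires the flow to be a limit of compact canonical boundary motions, which is not available here.

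The paper's proof is a clean two-line localization of your flat-continuity idea. Because $\Sigma$ is asymptotic to a regular cone, there is a point $\tilde x_0\ne 0$ in the regular support of $\mu_{\tilde T_0}$, i.e.\ with Gaussian density $\Theta_{(\tilde x_0,0)}=1$. Take a tangent flow of $(\tilde T,\mathcal{\tilde K})$ at $(\tilde x_0,0)$: for $t<0$ it is a multiplicity-one hyperplane (regular point), for $t>0$ it is zero (extinction hypothesis). This quasi-static multiplicity-one plane is \emph{not} a matching motion. But it arises as a tangent flow of a matching motion with entropy less than $2$, so by Theorem \ref{thm4} it must be matching—contradiction. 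In other words, rather than arguing directly about the cone, the paper performs a second blow-up at a smooth point of the cone, reducing the discontinuity to the transparent case of a plane that vanishes instantaneously.
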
 

\begin{proof}
Because $\Sigma$ is asymptotic to a regular cone, there is $(\tilde x_0,0)$, $\tilde x_0\in\mathbb R^{n+1}\setminus\{0\}$ in the regular support of $\mu_{\tilde T_0}$, ($\Theta_{(\tilde x_0,0)}=1$). Namely, a tangent flow at $(\tilde x_0,0)$ is a multiplicity 1 plane for negative time. If 0 is the extinction time, then the tangent flow must also be 0 for positive time. We get a quasi-static multiplicity 1 plane as a tangent flow, which is not a matching motion, a contradiction to Theorem \ref{thm4}.

\end{proof}

\begin{prop}\label{thm11}
For each $n$, there exists a $\delta(n)$ such that: If $(T,\mathcal K)$ is a matching motion in $\mathbb R^{n+1}$ with $\lambda(\mathcal K)\leq\Lambda_n+\delta(n)$ that becomes extinct at time $t_0$ and $\Theta_{(x_0,t_0)}\geq1$ for some $x_0\in\mathbb R^{n+1}$, then any tangent flow at $(x_0,t_0)$ is the round n-sphere.
\end{prop}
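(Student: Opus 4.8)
The plan is to argue by contradiction and compactness, reducing the dimension via the cylinder structure of potential tangent flows and invoking the lower entropy bounds of Stone. Suppose the statement fails; then there is a sequence of matching motions $(T_i, \mathcal K_i)$ with $\lambda(\mathcal K_i) \leq \Lambda_n + \frac{1}{i}$, each becoming extinct at some $(x_{0,i}, t_{0,i})$ with $\Theta_{(x_{0,i},t_{0,i})} \geq 1$, and a tangent flow $\mathcal T_i$ at that space-time point which is not the round $n$-sphere. After translating and parabolically rescaling we may assume $(x_{0,i}, t_{0,i}) = (0,0)$ and that $\mathcal T_i$ is an associated matching motion to a self-shrinking measure $\mu_i \in \mathcal{SM}_n$ (using Theorem \ref{thm4}, since $\lambda \leq \Lambda_n + \delta < 2$, every tangent flow of a low-entropy matching motion is an associated matching motion to a self-shrinker). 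By lower semicontinuity of entropy and the entropy gap $\Lambda_n < \Lambda_{n-1} < \cdots$, after passing to a subsequence $\mu_i \to \mu_\infty \in \mathcal{SM}_n(\Lambda_n + \text{(small)})$ with $\lambda(\mu_\infty) \leq \Lambda_n$; but $\lambda(\mu_\infty) \geq \Lambda_n$ always holds for a closed self-shrinker (or one would contradict the minimality of $\Lambda_n$ proved in \cite{BW2}), so $\lambda(\mu_\infty) = \Lambda_n$ and $\mu_\infty$ must be the round $n$-sphere by the Bernstein--Wang classification.

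Next I would upgrade this weak limit to a contradiction using the regularity/forcing machinery already assembled. Since the limiting self-shrinker is the round sphere, which is smooth and compact, Proposition \ref{thm3} (White's partial regularity via Brakke) forces the $\mathcal T_i$ to be smooth and $C^\infty_{loc}$-close to the round sphere for $i$ large, on the relevant space-time region away from the extinction time. This already shows $\mathcal T_i$ is the regular flow of a nearby hypersurface with entropy near $\Lambda_n$; combined with Theorem \ref{thm9} (the uniqueness of the round-sphere flow among matching-motion limits of canonical boundary motions with entropy $\to \Lambda_n$), one concludes $\mathcal T_i$ is itself the flow of a round $n$-sphere for $i$ large. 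But a tangent flow is a self-shrinker backward in time and is multiplicity one with density $\geq 1$ at the origin; the only such object consistent with the above is the round shrinking sphere, contradicting the choice of $\mathcal T_i$ as \emph{not} a round sphere.

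An alternative, more self-contained route avoids Theorem \ref{thm9}: since each $\mathcal T_i$ is an associated matching motion to $\mu_i$, and $\mu_i$ is a compact boundary measure (by Proposition 4.3 of \cite{BW2}, valid since $\lambda(\mu_i) < \Lambda_{n-1}$ for $n \geq 3$), each $\mu_i$ is a closed self-shrinker with $\Lambda_n \leq \lambda(\mu_i) \leq \Lambda_n + \frac{1}{i}$; by the compactness of self-shrinkers with bounded entropy and bounded geometry (Colding--Minicozzi smooth compactness, or the varifold compactness plus Brakke regularity) the $\mu_i$ converge smoothly to the round sphere, hence are round spheres themselves for $i$ large, again a contradiction. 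Either way, the choice of $\delta(n)$ is dictated by the entropy gap: one needs $\delta(n)$ small enough that $\Lambda_n + \delta(n) < \Lambda_{n-1}$ (so that no cylindrical or conical self-shrinker can appear in the limit) and $\delta(n) < 2 - \Lambda_n$ (so that Theorem \ref{thm4} applies and all tangent flows are multiplicity-one associated matching motions).

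The main obstacle I expect is not the compactness step but verifying that the weak limit $\mu_\infty$ is genuinely the round sphere rather than, say, a multiplicity-two copy of a lower-entropy object or a non-compact self-shrinker: one must carefully exploit (i) the entropy strictly below $\Lambda_{n-1}$ to exclude anything with a cylindrical end (via Lemma \ref{thm10}, a nontrivial density point off the origin would force a line split and hence entropy $\geq \Lambda_{n-1}$), (ii) Lemma \ref{thm12} to exclude asymptotically conical limits that become extinct at time $0$, and (iii) the compactness of matching motions with entropy $< 2$ (Theorem \ref{thm4}) to ensure multiplicity one is preserved in the limit. Packaging these exclusions cleanly, in the right order, so that the only survivor is the round sphere, is the delicate part.
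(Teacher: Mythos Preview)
There is a genuine gap: both your main and alternative routes presuppose that the tangent-flow self-shrinker $\mu_i$ (or its limit $\mu_\infty$) has compact support, which is exactly the hard part. In the alternative route you cite Proposition 4.3 of \cite{BW2} to conclude $\mu_i$ is a compact boundary measure, but that proposition applies only to elements of $\mathcal{CSM}_n$, i.e., self-shrinking measures already known to have compact support, so the citation is circular. In the main route you assert $\lambda(\mu_\infty)\geq\Lambda_n$ ``for a closed self-shrinker,'' again assuming compactness; your subsequent invocation of Theorem \ref{thm9} is also misplaced, since its hypothesis is that the limit arises from canonical boundary motions, not from arbitrary tangent flows. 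And even granting that $\mu_\infty$ were the round sphere, $C^\infty_{loc}$ convergence via Proposition \ref{thm3} would not force a possibly non-compact $\mu_i$ to coincide with the sphere globally.

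The paper's argument is structurally different and supplies the missing idea. It is not a compactness argument over a sequence with $\delta\to 0$; rather, $\delta(n)$ is first fixed using the entropy-gap results of \cite{BW2} and \cite{Z} so that the only element of $\mathcal{CSM}_n(\Lambda_n+\delta(n))$ is the round sphere. It then remains to show that no \emph{non-compact} self-shrinking matching motion with entropy below $\Lambda_n+\delta(n)$ can be extinct at time $0$, and this is done by induction on $n$. The base case $n=3$ uses the asymptotically-conical regularity result of \cite{BW4} together with Lemma \ref{thm12}. For the inductive step, if such a non-compact $n$-dimensional self-shrinker were extinct at $0$, pick an off-origin point of density $\geq 1$ and apply Lemma \ref{thm10}: any tangent flow there splits off a line, yielding an $(n-1)$-dimensional self-shrinking matching motion $\{\nu_t\}$ extinct at $0$ with $\lambda(\nu_{-1})<\Lambda_{n-1}$; the entropy bound forces $\nu_{-1}\neq\mathbb{S}^{n-1}$, contradicting the inductive hypothesis. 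You do name Lemmas \ref{thm10} and \ref{thm12} in your closing paragraph, but the dimension-reduction induction that makes them effective is absent from your proposal.
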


\begin{proof}
Since $n\geq3$, if we choose $\delta(n)<\Lambda_{n-1}-\Lambda_n$, any element in $\mathcal{CSM}_n(\Lambda_{n}+\delta(n))$ is a compact boundary measure. By the results Corollary 6.5 of \cite{BW2} for dimensions $2\leq n\leq 6$ and Corollary 2.9 of \cite{Z} for all higher dimensions, we can choose some $\delta(n)<(\Lambda_{n-1}-\Lambda_n)$ so that the only element in $\mathcal{CSM}_n(\Lambda_{n}+\delta(n))$ that is a compact boundary measure is the round sphere.

For $n=3$, by Proposition 3.3 of \cite{BW4}, if $\mu\in\mathcal{SM}_3(\Lambda)$ does not have compact support, then $\mu=\mu_{\Sigma^3}$ where $\Sigma^3$ is a regular self-shrinker that is asymptotic to a regular cone (the link of the asymptotic cone is a smooth embedded hypersurface in $\mathbb S^3$).

So for $n=3$, if $t_0$ is the extinction time and $\Theta_{(x_0,t_0)}>0$, then a tangent flow at $(x_0,t_0)$ is a matching motion by Theorem \ref{thm4}, and extinct at time 0 because $(T,\mathcal K)$ is extinct at $t_0$. Combining Lemma \ref{thm12}, we conclude that a tangent flow at $(x_0,t_0)$ is round 3-sphere.

For dimension $n\geq4$, since we don't have these regularity result, we argue by induction. Suppose we know that for $k=3,...,n-1$, any k-dimensional self-shrinking matching motion that is not a sphere cannot be extinct at time 0.

If an extinction-time tangent flow of $(T,\mathcal K)$, is $\mu^n=\lim_{i\rightarrow\infty}D_{\rho_i}(\mathcal K-(x_0,t_0))$ for some $\rho_i\rightarrow\infty$, $\mu^n\in\mathcal{SM}_n(\Lambda_n+\delta(n))$, non-compact, with associated matching motion being $(\tilde T^n,\mathcal{\tilde K}^n)$, and that it's extinct at 0, we can choose $y^n_0\in\mathbb R^{n+1}-\{0\}$ such that $\Theta_{(y_0,0)}(\{\mu_{\tilde T^n_t}\})>0$, then any tangent flow at $(y_0,0)$ splits off a line backward in time by Lemma \ref{thm10}, say it is $\{\nu_t\times\mathbb R\}$ for $t\leq0$ and $\nu_{-1}\in\mathcal{SM}_{n-1}(\Lambda_n+\delta(n))\subset\mathcal{SM}_{n-1}(\Lambda_{n-1})$. Since it's a tangent flow at the extinction time, $\{\nu_t\}$ must also become extinct at time 0, and is not the (n-1)-sphere by the entropy bound, contradicting the induction hypothesis, and thus we proved the Proposition.

\end{proof}

We have the following straightforward consequence.

\begin{cor}\label{thm13}
For the same $\delta(n)$, if $\mu\in\mathcal{SM}_n(\Lambda_n+\delta(n))$ has a non-compact support, and it has associated matching motion, then this matching motion cannot be extinct at time 0.
\end{cor}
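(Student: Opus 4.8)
The statement to prove is Corollary \ref{thm13}: for the same $\delta(n)$ as in Proposition \ref{thm11}, if $\mu \in \mathcal{SM}_n(\Lambda_n + \delta(n))$ has non-compact support and admits an associated matching motion, then that matching motion cannot be extinct at time $0$. The plan is to reduce this directly to Proposition \ref{thm11} via a tangent-flow argument, exploiting that the associated matching motion is backward self-similar and that non-compactness of $\spt\mu$ forces the existence of a space-time point with positive Gaussian density away from the spatial origin.

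\emph{Step 1: Set up the contradiction.} Suppose $\mu \in \mathcal{SM}_n(\Lambda_n + \delta(n))$ has non-compact support and its associated matching motion $(\tilde T, \mathcal{\tilde K})$, with $\mathcal{\tilde K} = \{\tilde\mu_t\}$ and $\tilde\mu_t = \sqrt{-t}\,\mu$ for $t < 0$, is extinct at time $0$. Since $\spt\mu$ is non-compact, pick points $x_j \in \spt\mu$ with $|x_j| \to \infty$; after rescaling and using backward self-similarity one can locate a point $y_0 \in \mathbb{R}^{n+1} \setminus \{0\}$ with $\Theta_{(y_0,0)}(\mathcal{\tilde K}) \geq 1$. (Concretely: the self-similar structure $\tilde\mu_t = \sqrt{-t}\,\mu$ means the Gaussian density at any $(y,0)$ with $y \in \spt\mu$ is at least $1$; alternatively this is exactly the mechanism used in the proof of Proposition \ref{thm11} and in Lemma \ref{thm12}, where asymptotic-conical shrinkers supply such a point in the regular support.)

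\emph{Step 2: Invoke a tangent flow at $(y_0,0)$.} Because $\mathcal{\tilde K}$ has entropy $\leq \Lambda_n + \delta(n) < 2$ and $\Theta_{(y_0,0)} \geq 1$, Ilmanen's compactness (as recalled after Proposition \ref{thm7}, resp. the discussion preceding Lemma \ref{thm10}) yields a tangent flow $\mathcal{T}$ at $(y_0,0)$, which by Theorem \ref{thm4} is again a matching motion, associated to a self-shrinking measure, with $\lambda(\mathcal{T}) \leq \lambda(\mathcal{\tilde K}) \leq \Lambda_n + \delta(n)$. Moreover $\mathcal{\tilde K}$ is extinct at time $0$, hence so is the tangent flow $\mathcal{T}$ at its origin (a tangent flow at an extinction space-time point is extinct at time $0$). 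Now there are two cases depending on whether $\mathcal{T}$ is a self-shrinker with compact or non-compact support — but in either case we are in the hypotheses of Proposition \ref{thm11} (applied to $(\tilde T, \mathcal{\tilde K})$ itself, which is a matching motion with the right entropy bound that becomes extinct at time $0$ with $\Theta_{(y_0,0)} \geq 1$): every tangent flow at $(y_0, 0)$ must be the round $n$-sphere.

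\emph{Step 3: Derive the contradiction.} A round $n$-sphere flow, as a tangent flow, is compactly supported and has density $\lambda(\mathbb{S}^n) = \Lambda_n > 1$ at its center, but density exactly $1$ (multiplicity-one plane, by self-similarity splitting off a line) at regular points of its time-$(-1)$ slice — and more to the point, a tangent flow is translation-invariant in the limit direction by construction (it splits off a line backward in time, cf.\ Lemma \ref{thm10} applied with $y_0 \neq 0$), so it cannot be a compact sphere. More directly: the tangent flow $\mathcal{T}$ at $(y_0,0)$ with $y_0 \neq 0$ splits off the line $\mathbb{R} y_0$ backward in time by Lemma \ref{thm10}, so $\mathcal{T}_{t \leq 0} = \{\nu_t \times \mathbb{R}\}$; a round $n$-sphere flow has no such splitting. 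This contradicts the conclusion of Step 2 that $\mathcal{T}$ is the round $n$-sphere, and the contradiction shows the associated matching motion of $\mu$ cannot be extinct at time $0$.

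\emph{Main obstacle.} The delicate point is Step 1: producing a genuine space-time point $(y_0, 0)$ with $y_0 \neq 0$ and $\Theta_{(y_0,0)} \geq 1$ from mere non-compactness of $\spt\mu$. For $n = 3$ the regularity structure of non-compact shrinkers (asymptotically conical, Proposition 3.3 of \cite{BW4}) makes this immediate, as in Lemma \ref{thm12}; in higher dimensions one does not have such structure, so one must argue abstractly — e.g. that any point in $\spt\mu$ has Gaussian density $\geq 1$ for the associated self-shrinking flow, which is just monotonicity plus the fact that the density of $\mu$ at any point of its support is at least $1$ (it is an integral varifold that is stationary for Gaussian area, hence has a well-defined density $\geq 1$ on a full-measure subset of its support, and $\spt\mu$ is the closure of that set). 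One then picks such a point with $y_0 \neq 0$, which is possible precisely because $\spt\mu$ is non-compact, hence in particular not contained in $\{0\}$. Once this point is in hand, Steps 2–3 are a routine application of Proposition \ref{thm11} and Lemma \ref{thm10}.
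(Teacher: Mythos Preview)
Your Steps 2--3 are fine once you have a point $y_0\neq 0$ with $\Theta_{(y_0,0)}(\mathcal{\tilde K})\geq 1$: Proposition~\ref{thm11} forces the tangent flow there to be the round sphere, while Lemma~\ref{thm10} forces it to split off a line, and these are incompatible. The problem is Step~1, which you correctly flag as the obstacle but do not resolve. Your justification --- ``the self-similar structure $\tilde\mu_t=\sqrt{-t}\,\mu$ means the Gaussian density at any $(y,0)$ with $y\in\spt\mu$ is at least $1$'' --- confuses time slices: $\spt\mu=\spt\tilde\mu_{-1}$ is the support at time $-1$, and a point $y\in\spt\mu$ tells you about $\Theta_{(y,-1)}$, not $\Theta_{(y,0)}$. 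The argument in your ``Main obstacle'' paragraph repeats the same conflation. One can salvage Step~1 (take $x_j\in\spt\mu$ with $|x_j|\to\infty$, observe $x_j/|x_j|\in\spt\tilde\mu_{-1/|x_j|^2}$, pass to a subsequential limit $y_0$ on the unit sphere, and use upper semi-continuity of Gaussian density), but this still requires checking that points in the support of a matching motion have Gaussian density $\geq 1$, which you have not done.

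All of this is unnecessary. The ``straightforward consequence'' the paper has in mind is to apply Proposition~\ref{thm11} at the \emph{origin}. For the associated self-shrinking flow one has $\Theta_{(0,0)}(\mathcal{\tilde K})=F(\mu)=\lambda(\mu)\geq 1$ (the last inequality by Claim~\ref{thm6}, the equality $F=\lambda$ because $\mu$ is a self-shrinker). By backward self-similarity, $D_\rho(\mathcal{\tilde K})_{t<0}=\mathcal{\tilde K}_{t<0}$ for every $\rho>0$, so the tangent flow at $(0,0)$ is $\mathcal{\tilde K}$ itself for negative time. If the flow were extinct at time $0$, Proposition~\ref{thm11} would then say this tangent flow --- i.e.\ $\mu$ itself --- is the round $n$-sphere, contradicting the hypothesis that $\spt\mu$ is non-compact. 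This two-line argument avoids both the search for $y_0\neq 0$ and the appeal to Lemma~\ref{thm10}.
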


\begin{lem}\label{thm14}
Let $(T_i,\mathcal K_i=\{\mu_{i,t}\})$ be a sequence of matching motions in $\mathbb R^{n+1}$  converging to $(T,\mathcal K=\{\mu_t\})$, $\lambda(\mathcal K)\leq\Lambda_n+\delta(n)$ and
\begin{gather*}
0\in\spt(\mu_{i,0})
\end{gather*}
for all i. If it does not develop a spherical singularity for $t\in(-R,R)$, ($R>0$), then
\begin{gather}
0\in\spt(\mu_{0})
\end{gather}
\end{lem}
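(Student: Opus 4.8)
The plan is to argue by contradiction: suppose $0 \notin \spt(\mu_0)$. Then there is a small radius $r > 0$ with $\bar B_{2r}(0) \cap \spt(\mu_0) = \emptyset$, and by upper semicontinuity of Huisken density (or the local regularity theory) the limit flow $\mathcal K$ is empty in a full space-time neighborhood of $(0,0)$ — say it is regular (indeed vacuous) in $B_{2r}(0) \times (-\sigma, \sigma)$ for some small $\sigma > 0$. Here I would use the hypothesis that no spherical singularity forms for $t \in (-R,R)$ together with Proposition~\ref{thm11}: the only way $\mathcal K$ could fail to be regular (hence could fail to be locally empty once the measure vanishes at $t=0$) near a space-time point of positive density is through a singularity whose tangent flow is forced, under the entropy bound $\lambda(\mathcal K) \le \Lambda_n + \delta(n)$, to be the round sphere — which is excluded. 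So locally around $(0,0)$ the limit flow is smooth, and since $\mu_0\llcorner B_{2r}(0) = 0$ and the flow is smooth there, it is vacuous on $B_{2r}(0)$ for a short time interval around $0$.

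Next I would invoke Proposition~\ref{thm3}: since $\mathcal K_i \to \mathcal K$ and $\mathcal K$ is regular (namely empty) in $B_{2r}(0) \times (-\sigma,\sigma)$, for $i$ large enough $\mu_{i,t}$ is also regular in $B_{r}(0) \times (-\sigma/2, \sigma/2)$, and moreover $\spt(\mu_{i,t}) \to \spt(\mu_t)$ in $C^\infty_{\mathrm{loc}}(B_{2r}(0))$. Because $\spt(\mu_t) \cap B_{2r}(0) = \emptyset$ for $t \in (-\sigma,\sigma)$, $C^\infty_{\mathrm{loc}}$ convergence forces $\spt(\mu_{i,t}) \cap B_{r}(0) = \emptyset$ for all large $i$ and all $t$ near $0$; in particular $\spt(\mu_{i,0}) \cap B_r(0) = \emptyset$. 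But this contradicts the hypothesis $0 \in \spt(\mu_{i,0})$ for all $i$. Hence $0 \in \spt(\mu_0)$, as claimed.

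The one point that needs care — and the main obstacle — is justifying that $\mathcal K$ really is regular (vacuous) in a \emph{full} space-time neighborhood of $(0,0)$, not merely that $\mu_0$ has a gap at the origin. The worry is a singularity of $\mathcal K$ appearing at some $(x', t')$ with $x'$ near $0$ and $t'$ slightly negative, whose forward evolution could sweep mass into $B_r(0)$ at times near $0$; or a singularity exactly at $(0,0)$. Both are controlled by the entropy bound: any such singularity has a tangent flow which is an associated matching motion to a self-shrinking measure with entropy $\le \Lambda_n + \delta(n)$ (Theorem~\ref{thm4}), and by Proposition~\ref{thm11} and Corollary~\ref{thm13} the only self-shrinking matching motions available at such low entropy that become extinct are round spheres, while the excluded "spherical singularity for $t \in (-R,R)$" removes exactly that case; a non-compact self-shrinker cannot be the tangent flow of a flow that is locally empty on one side in time. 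So the local regularity / stratification near $(0,0)$ is forced, and the argument closes. I would also note that the interval $(-R,R)$ only needs $R$ large enough that $(-\sigma,\sigma) \subset (-R,R)$, which is automatic since $\sigma$ can be taken as small as we like.
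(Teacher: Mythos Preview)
The central claim---that $\mathcal K$ is vacuous in a full space-time neighborhood $B_{2r}(0)\times(-\sigma,\sigma)$---is false, and this is exactly what upper semi-continuity of Huisken's density rules out (you have the direction of that implication reversed). From $0\in\spt(\mu_{i,0})$ one gets $\Theta_{(0,0)}(\mathcal K_i)\geq 1$, and upper semi-continuity under Brakke convergence gives $\Theta_{(0,0)}(\mathcal K)\geq 1$; hence there is mass in every neighborhood of $0$ at times $t<0$ arbitrarily close to $0$. So the route ``the limit is empty near $(0,0)$, hence by Proposition~\ref{thm3} the $\mathcal K_i$ are too'' cannot close.

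The paper's proof uses precisely this observation as its starting point: $\Theta_{(0,0)}(\mathcal K)\geq 1$ together with $0\notin\spt(\mu_0)$ means $(0,0)$ is an extinction singularity for the flow restricted to a small ball, and Proposition~\ref{thm11} then forces the tangent flow there to be the round $n$-sphere. Your last paragraph essentially arrives at this same point, but you try to feed it back into a (nonexistent) vacuous neighborhood instead of stopping. The paper instead applies Proposition~\ref{thm3} to the \emph{smooth shrinking sphere} (not to an empty flow): for $i$ large each $\mathcal K_i$ also develops a spherical singularity at some time before $R/2$, and that is the contradiction---note that, as the application in Theorem~\ref{thm19} makes clear, the no-spherical-singularity hypothesis is imposed on the approximating flows $\mathcal K_i$, not only on the limit. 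If you read ``it'' as the limit, the contradiction already appears at the tangent-flow step; either way the vacuous-neighborhood detour should be dropped.
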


\begin{rem}
Without the condition that $(T,\mathcal K)$ does not develop a spherical singularity for $t\in(-R,R)$, the lemma is false. For example we can choose a sequence of regular space-time points on the shrinking sphere that converges to its extinction space-time point.
\end{rem}

\begin{proof}
By upper semi-continuity of the Huisken's density, we have $\Theta_{(0,0)}(\mu_{t})\geq1$. If $0\notin\spt(\mu_{0})$, then there is a neighborhood $U\subset \mathbb R^{n+1}$ of $0$ such that $U\cap\spt(\mu_0)=\emptyset$, and 0 is an extinction singularity for the flow $(T,\mathcal K)$ restricted to $U$. 

By Proposition \ref{thm11}, the tangent flow of $(T,\mathcal K)$ at $(0,0)$ is multiplicity-1 round sphere. By Brakke's regularity Proposition \ref{thm3}, for large enough i, $(T_i,\mathcal K_i)$ must also be a flow of a topological sphere that develops a spherical singularity before time $\frac{R}{2}$, a contradiction.
\end{proof}

\begin{prop}\label{thm15}
An ancient matching motion $(T,\mathcal K)$ in $\mathbb R^{n+1}$ with $\lambda(\mathcal K)\leq\Lambda_n+\delta(n)$, where $\delta(n)$ is given in Lemma \ref{thm9}, is either eternal or the flow of a topological sphere. 
\end{prop}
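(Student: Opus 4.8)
The plan is to argue by contradiction: suppose $(T,\mathcal K)$ is an ancient matching motion with $\lambda(\mathcal K)\le\Lambda_n+\delta(n)$ that is \emph{not} eternal, so there is a first extinction time $t_0$ (after which $\mu_t=0$), and suppose also that it is not the flow of a topological sphere. Since the flow reaches $t_0$ with $\mu_t\to 0$, monotonicity of Huisken's density gives a space-time point $(x_0,t_0)$ with $\Theta_{(x_0,t_0)}\ge 1$ (pick $x_0$ in the support of the last non-trivial slices, or use upper semi-continuity of $\Theta$ along a sequence of regular points converging to the extinction point). By Proposition \ref{thm11}, the tangent flow at $(x_0,t_0)$ is the multiplicity-one round $n$-sphere. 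The goal is then to upgrade this local statement to the global conclusion that $(T,\mathcal K)$ \emph{is} the shrinking round sphere (hence the flow of a topological sphere), contradicting our assumption.

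The key step is to run the following ``propagation of the spherical singularity backward in time'' argument. Because the tangent flow at $(x_0,t_0)$ is a round sphere, Brakke's regularity (Proposition \ref{thm3}) shows that for $t$ slightly less than $t_0$ the slice $M_t$ is, near $x_0$, a smooth closed hypersurface $C^\infty$-close to a small round sphere; in particular there is $t_1<t_0$ such that $\{\mu_t\}_{t\in(t_1,t_0]}$ is a smooth compact mean curvature flow of a topological sphere shrinking to the point $x_0$. I would then consider the ``first time'', going backward from $t_0$, at which this smooth spherical description fails; call it $t_*$, and I claim $t_*=-\infty$. Indeed, on the maximal interval $(t_*,t_0)$ the flow is a smooth compact flow; its entropy is $\le\Lambda_n+\delta(n)<\Lambda_{n-1}$, so no neck/cylindrical singularity can form as $t\downarrow t_*$ (that would force a cylinder tangent flow, whose entropy is $\ge\Lambda_{n-1}$, exceeding the bound — here I use Stone's inequality \eqref{...} and Proposition \ref{thm11} type reasoning), and since the flow is compact the only alternative, a vanishing/extinction singularity, would again have a round-sphere tangent flow by Proposition \ref{thm11}, which does not destroy the smooth spherical structure for $t>t_*$ but would make $t_*$ an \emph{earlier} extinction time, contradicting that $\mu_t$ is non-trivial for $t$ near $t_*$ unless $t_*=-\infty$. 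The remaining possibility is that the flow stays smooth and compact but the sphere grows unboundedly as $t\to t_*^+$; but a smooth compact ancient flow that is at every earlier time a topological sphere, with entropy below $\Lambda_{n-1}$ hence below $\frac32$, is forced by the classification of low-entropy ancient solutions / Huisken's monotonicity to be the shrinking round sphere, in particular $t_*=-\infty$ is impossible for a genuine shrinker and instead the flow \emph{is} exactly the round shrinking sphere. In all cases $(T,\mathcal K)$ is the flow of a topological sphere, the desired contradiction.

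A cleaner way to organize the last part, avoiding a delicate ancient-solution classification, is: once we know there is an interval $(t_1,t_0)$ on which the flow is a smooth compact sphere shrinking to $(x_0,t_0)$, we may rescale about $(x_0,t_0)$ and extract a tangent flow which is the round shrinking sphere on $t<0$; but by Lemma \ref{thm10} / Corollary \ref{thm13}, if the \emph{global} flow were not this sphere there would be some other point $y\ne x_0$ in the support at an earlier regular time whose rescaled limit splits off a line backward in time, producing a self-shrinking matching motion in a lower dimension with entropy $<\Lambda_{n-1}$ that becomes extinct at time $0$ and is not a sphere — contradicting Proposition \ref{thm11} (the inductive statement therein that low-entropy self-shrinking matching motions which are not spheres cannot be extinct at time $0$). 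Thus no such $y$ exists, the flow is globally the shrinking round $n$-sphere, and it is in particular the flow of a topological sphere.

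The main obstacle I expect is making rigorous the backward propagation of regularity: controlling what happens as one runs the smooth compact spherical flow backward in time and ruling out that the spherical region ``detaches'' from the rest of the flow or that a non-spherical singularity appears at the boundary of the maximal smooth interval. This is exactly where the entropy gap $\Lambda_n+\delta(n)<\Lambda_{n-1}$ and Proposition \ref{thm11} do the heavy lifting — any singularity of a compact low-entropy flow is either a (round sphere) extinction or forces a cylindrical tangent flow with too much entropy — so the argument reduces to a careful but essentially bookkeeping application of Brakke regularity, Huisken monotonicity, and the classification results already assembled in this section.
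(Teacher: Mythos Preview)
Your approach differs from the paper's in a crucial way, and the difference is exactly where your gap lies. The paper does \emph{not} blow up at the extinction point; it blows \emph{down}. That is, it takes $\rho_i\to 0$ (not $\rho_i\to\infty$) and passes to a subsequential limit of $D_{\rho_i}(\mathcal K-(x_0,t_0))$. Because the flow is ancient this limit exists; by Huisken monotonicity it is backwardly self-similar; and since the original flow is extinct at $t_0$ the limit is extinct at time $0$. Corollary \ref{thm13} then forces the limit self-shrinker to be compactly supported, hence (by the choice of $\delta(n)$) the round sphere. Brakke regularity (Proposition \ref{thm3}) applied to this blow-\emph{down} sequence shows that at sufficiently early times the original flow itself is a smooth topological sphere, with $\rho_i\mu_{t_0-1/\rho_i^2}\to\sqrt{2n}\,\mathbb S^n$ in $C^\infty$. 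No backward propagation is needed: the blow-down already sees the flow at early times directly.

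Your blow-up at the extinction point, by contrast, only yields information in a \emph{microscopic} neighborhood of $(x_0,t_0)$: the ball $B_R$ in rescaled coordinates corresponds to $B_{R/\rho_j}(x_0)$ with $\rho_j\to\infty$, which shrinks to a point. So from Proposition \ref{thm11} plus Brakke regularity you learn only that \emph{one component} of $\mu_t$ near $x_0$ is a small topological sphere for $t$ just below $t_0$; nothing prevents the flow from having other pieces elsewhere that also die at $t_0$. Your backward-propagation sketch never addresses this, and your ``cleaner alternative'' misapplies Lemma \ref{thm10}: that lemma uses the backward self-similarity of $\mathcal K$ explicitly in its computation, so it applies only to flows already associated to a self-shrinker, not to the general ancient matching motion $(T,\mathcal K)$ here. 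To make your route work you would effectively need Proposition \ref{thm16} (a separate argument) to rule out extra components, and even then extending the smooth spherical description backward past potential earlier singular times is not the bookkeeping exercise you describe. The paper's blow-down trick sidesteps all of this in one stroke.
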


\begin{proof}

Suppose it's not eternal, it has extinction time $t_0$.

Choose $(x_0,t_0)$ such that $\Theta_{(x_0,t_0)}(\mathcal K)\geq1$. By the entropy bound and Brakke's compactness theorem, there is a blow-down sequence of flows $D_{\rho_i}(\mathcal K-(x_0,t_0))$, for some $\rho_i\rightarrow0$, converging to a limit flow $\mathcal {\tilde K}=\{\tilde\mu_t\}$. The limit is a matching motion $(\tilde T,\mathcal{\tilde K})$ by Theorem \ref{thm4} and extinct at $t=0$. Huisken's monotonicity formula \cite{H} implies that this limit flow is backwardly self-similar for $t<0$. 

By Corollary \ref{thm13}, $(\tilde T,\mathcal{\tilde K})$ must be the self-shrinking round sphere, $\tilde\mu_{-1}=\sqrt{2n}\mathbb S^n$. The convergence is multiplicity 1 by the entropy bound. So by Brakke's regularity Proposition \ref{thm4}, for large enough i, $D_{\rho_i}(\mathcal K-(x_0,t_0))$ is also the flow of a topological sphere. $\rho_i\mu_{-\frac{1}{\rho_i^2}}\rightarrow\sqrt{2n}\mathbb S^n$ in $C^\infty$ as $\rho_i\rightarrow0$.

\end{proof}

\begin{rem}
In Proposition 3.2 of \cite{BW1}, they got a stronger classification in $\mathbb R^3$ by making use of the entropy lower bound for 2-dimensional asymptotic conical self-shrinker in \cite{BW3}. That depend on a classification of genus 0 self-shrinkers by Brendle \cite{Br}, the argument of which only works in dimension 2.
\end{rem}

\begin{prop}\label{thm16}
The $\delta(n)$ can be chosen small enough so that: if $(T,\mathcal K=\{\mu_t\}_{t\in\mathbb R})$ is a matching motion in $\mathbb R^{n+1}\times\mathbb R$ with entropy $\lambda(\mathcal K)\leq\Lambda_n+\delta(n)$, that it develops a spherical singularity at $(x_0,t_0)$, then the flow is extinct at time $t_0$ at the point $x_0$.
\end{prop}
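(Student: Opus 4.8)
The plan is to show that near $(x_0,t_0)$ the flow is, for times just below $t_0$, a single smooth shrinking sphere that disappears exactly at $(x_0,t_0)$, and then to invoke the avoidance principle to rule out anything near $x_0$ at times $\ge t_0$. After a translation assume $(x_0,t_0)=(0,0)$. First I would pin down the local picture for $t<0$: by hypothesis some tangent flow of $(T,\mathcal K)$ at $(0,0)$ is a round shrinking sphere, and its multiplicity is one, since a multiplicity $k\ge 2$ sphere would have entropy $k\Lambda_n\ge 2\Lambda_n>2>\Lambda_n+\delta(n)\ge\lambda(\mathcal K)$, which is impossible by lower semicontinuity of entropy under blow-up. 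So there is $\rho_i\to\infty$ with $D_{\rho_i}\mathcal K\to\mathcal S:=\{\sqrt{-2nt}\,\mathbb S^n\}_{t<0}$, and $\mathcal S$ is a smooth mean curvature flow on $\mathbb R^{n+1}\times(-\infty,0)$. Applying Brakke's local regularity theorem in the form of Proposition \ref{thm3} to this convergence and undoing the parabolic rescaling, I get $\rho_0>0$ and $\sigma_0\in(0,\rho_0^2)$ such that $\mathcal K$ is a smooth mean curvature flow on $B_{\rho_0}(0)\times(-\sigma_0,0)$ and, for each $t\in(-\sigma_0,0)$, $\spt\mu_t\cap B_{\rho_0}(0)$ is a connected closed embedded multiplicity-one hypersurface $C^\infty$-close (at scale $\sqrt{-t}$) to $\sqrt{-2nt}\,\mathbb S^n$ centered at the origin. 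After shrinking $\sigma_0$ I may assume $\spt\mu_t\cap B_{\rho_0}(0)\subset B_{\rho_0/8}(0)$, $\mathcal H^n\big(\spt\mu_t\cap B_{\rho_0}(0)\big)\le C_n|t|^{n/2}$, and $0\notin\spt\mu_t$ for all $t\in(-\sigma_0,0)$.

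Next I would show the flow is empty near $0$ at time $0$. Fix $\varphi\in C_c^1(B_{\rho_0}(0))$ with $0\le\varphi\le 1$ and $\varphi\equiv 1$ on $B_{\rho_0/2}(0)$, so $\spt\nabla\varphi\subset B_{\rho_0}(0)\setminus B_{\rho_0/2}(0)$. For $a\in(-\sigma_0,0)$, Brakke's inequality (\ref{eq1}) for the time-independent test function $\varphi$, after discarding the nonpositive term $-|H|^2\varphi$, gives
\begin{gather*}
\int\varphi\,d\mu_0-\int\varphi\,d\mu_a\le\int_a^0\!\int \mathbf H\cdot\nabla\varphi\,d\mu_t\,dt .
\end{gather*}
But for $t\in(a,0)$ the set $\spt\mu_t\cap B_{\rho_0}(0)\subset B_{\rho_0/8}(0)$ is disjoint from $\spt\nabla\varphi$, so the right-hand side is zero, whence
\begin{gather*}
\mu_0\big(B_{\rho_0/2}(0)\big)\le\int\varphi\,d\mu_0\le\int\varphi\,d\mu_a\le\mu_a\big(B_{\rho_0}(0)\big)\le C_n|a|^{n/2}.
\end{gather*}
Letting $a\to 0^-$ gives $\mu_0(B_{\rho_0/2}(0))=0$, hence $\spt\mu_0\cap\overline{B_{\rho_0/4}(0)}=\emptyset$.

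Finally I would propagate this forward by avoidance. For $t\in[0,\sigma_1)$ with $\sigma_1:=(\rho_0/4)^2/(2n)$, let $r(t):=\sqrt{(\rho_0/4)^2-2nt}$; then $\{\partial B_{r(t)}(0)\}$ is a smooth compact mean curvature flow bounding the shrinking balls $\overline{B_{r(t)}(0)}$, which collapse to $\{0\}$ as $t\uparrow\sigma_1$. Since $\spt\mu_0$ is disjoint from $\overline{B_{r(0)}(0)}=\overline{B_{\rho_0/4}(0)}$, the avoidance principle for Brakke flows against smooth compact barriers (see e.g. \cite{I1}) yields $\spt\mu_t\cap\overline{B_{r(t)}(0)}=\emptyset$ for all $t\in[0,\sigma_1)$; hence for every $\delta\in(0,\rho_0/4)$ one has $\spt\mu_t\cap B_\delta(0)=\emptyset$ for $t\in[0,((\rho_0/4)^2-\delta^2)/(2n))$. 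Together with $0\notin\spt\mu_t$ for $t\in(-\sigma_0,0)$ and $0\notin\spt\mu_0$, this is exactly the assertion that the flow is extinct at time $t_0$ at $x_0$. I expect the main obstacle to be the first step — reading off from Proposition \ref{thm3} that in a \emph{fixed} backward parabolic neighborhood the flow is genuinely the single multiplicity-one sphere, with no stray sheets and no higher multiplicity (the latter excluded by $\lambda(\mathcal K)<2$) — together with a correct invocation of the Brakke-flow avoidance principle. One cannot shortcut this by blowing up at $(x_0,t_0)$ at times just after $t_0$ and using upper semicontinuity of supports, because supports of Brakke flows fail to be upper semicontinuous at extinction times (the shrinking round sphere is itself a counterexample), so a barrier argument seems unavoidable.
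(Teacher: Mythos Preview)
Your argument establishes only a \emph{local} conclusion --- that $\spt\mu_t\cap B_\delta(x_0)=\emptyset$ for small $\delta>0$ and $t\in[t_0,t_0+\epsilon)$ --- whereas the proposition asserts \emph{global} extinction: $\spt\mu_{t_0}\subset\{x_0\}$. Look at how the result is used: in Proposition~\ref{thm18} the conclusion is played off against the flow being \emph{eternal}, and in the paper's own proof of the present proposition the negation of extinction is taken to mean that there exists $y_i\neq x_0$ with $y_i\in\spt\mu_{i,t_0}$. Your barrier/avoidance argument cannot see a second component of the flow sitting far from $x_0$; nothing in your proof rules that out. There is also a secondary gap in the local step: obtaining $\spt\mu_t\cap B_{\rho_0}(0)\subset B_{\rho_0/8}(0)$ for a \emph{fixed} $\rho_0$ and \emph{all} $t\in(-\sigma_0,0)$ does not follow directly from Proposition~\ref{thm3}, since tangent-flow convergence only controls $\spt\mu_{-s}$ in balls of radius comparable to $\sqrt{s}$ and loses the fixed annulus $B_{\rho_0}\setminus B_{\rho_0/8}$ as $s\to 0$.

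The paper's proof uses a completely different, global mechanism --- and this is where ``$\delta(n)$ can be chosen small enough'' actually does work (your argument never uses smallness of $\delta$ beyond $\delta<\Lambda_n$). One argues by contradiction with a sequence $(T_i,\mathcal K_i)$ satisfying $\lambda(\mathcal K_i)\le\Lambda_n+\tfrac{1}{i}$, each with a spherical singularity at $(0,0)$ but with a second point $y_i\neq 0$ in $\spt\mu_{i,0}$. Rescale by $1/|y_i|$ and pass to a limit $\mathcal K_\infty$ (a matching motion by Theorem~\ref{thm4}): lower semicontinuity of entropy forces $\lambda(\mathcal K_\infty)=\Lambda_n$, while upper semicontinuity of Gaussian density gives $\Theta_{(0,0)}(\mathcal K_\infty)\ge\Lambda_n$ and $\Theta_{(u,0)}(\mathcal K_\infty)\ge 1$ for some unit vector $u$. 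Huisken monotonicity then makes the entropy at some earlier time strictly exceed $\Lambda_n$, a contradiction. The low-entropy hypothesis is doing global work here --- coupling the spherical singularity at $x_0$ to the rest of the flow --- which a purely local analysis cannot.
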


\begin{proof}
Suppose not, there is a sequence of matching motions $(T_i,\mathcal K_i=\{\mu_{i,t}\})$, with entropy $\lambda(\mathcal K_i)<\Lambda_n+\frac{1}{i}$, that develops a spherical singularity at $(x_i,t_i)$ but not extinct at time $t_i$. Without loss of generality, we can suppose $(x_i,t_i)=(0,0)$, otherwise we can do a space-time translation to make this happen.

Since the flows are not extinct at $(0,0)$, there is a point $(y_i,0)$ such that $y_i\neq0$ and $y_i\in\spt(\mu_{i,0})$.

We consider the rescaled flows $\mathcal {\tilde K}_i=D_{\frac{1}{|y_i|}}\mathcal K_i$. The new flows satisfy that $\Theta_{(0,0)}\mathcal {\tilde K}_i=\Lambda_n,\Theta_{(\frac{y_i}{|y_i|},0)}\geq1$. By Brakke's compactness theorem, we can extract a subsequence $i(k)$ so that $\mathcal {\tilde K}_{i(k)}\rightarrow\mathcal K_\infty$, and $\frac{y_i}{|y_i|}\rightarrow u$. The limit flow $\mathcal K_\infty$ is also a matching motion by Theorem \ref{thm4}. Moreover, by the upper semi-continuity of Huisken's density and the lower semi-continuity of entropy, we have
\begin{gather*}
\lambda(\mathcal K_\infty)=\Lambda_n\\
\Theta_{(0,0)}\geq\Lambda_n\\
\Theta_{(u,0)}\geq1\:\:\:\text{for some u with $|u|=1$}
\end{gather*}

But this is a contradiction, because by Huisken's monotonicity formula, for some time $t<0$, the time t slice of the flow $\mathcal K_\infty$ has entropy strictly greater than $\Lambda_n$.

\end{proof}

\section{2-sided clearing out lemma and estimate of Hausdorff distance}

We will still restrict ourselves to dimension $n\geq3$ for convenience, the 2-dimensional case was already known in \cite{BW1}. The following is Clearing out Lemma  of Brakke (\cite{B} Lemma 6.3, cf. 12.2 of \cite{I1}, Proposition 4.23 of \cite{Ec}). It can be formulated as follows

\begin{lem}\label{thm17}
(12.2 of \cite{I1}) There are constants $\eta>0,c_1>0$, depending on n such that, for any n-dimensional integral Brakke flow $\{\mu_t\}_{t\geq0},R>0$ and $(x_0,t_0)\in\mathbb R^{n+1}\times[0,\infty)$, if 
\begin{gather*}
\mu_{t_0}(B_R(x_0))\leq\eta R^n,
\end{gather*}
then
\begin{gather*}
\mu_{t_0+c_1R^2}(B_{\frac{R}{2}}(x_0))=0.
\end{gather*}
\end{lem}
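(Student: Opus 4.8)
The plan is to argue by contradiction, using Ilmanen's extension of Huisken's monotonicity formula to Brakke flows with bounded area ratios (quoted above), with the Gaussian centered at a hypothetical point of the support at the final time.

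Suppose the conclusion fails, i.e. $\mu_{t_1}(B_{R/2}(x_0))>0$ with $t_1:=t_0+c_1R^2$. Then $\spt\mu_{t_1}$ meets the open ball $B_{R/2}(x_0)$, say at a point $z$, so that $|z-x_0|<R/2$; and since the flow is an integral Brakke flow, standard density lower bounds together with the fact, recorded above, that a positive Huisken density is automatically $\ge 1$, give $\Theta_{(z,t_1)}(\{\mu_t\})\ge 1$. Applying the monotonicity formula to $\rho_{(z,t_1)}(x,t)=(4\pi(t_1-t))^{-n/2}e^{-|x-z|^2/(4(t_1-t))}$ on the interval $[t_0,t_1)$ and letting $t\uparrow t_1$ yields
\begin{gather*}
1\le\Theta_{(z,t_1)}(\{\mu_t\})\le\int_{\mathbb R^{n+1}}\rho_{(z,t_1)}(x,t_0)\,d\mu_{t_0}(x).
\end{gather*}

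Next I would split the right-hand integral over $B_R(x_0)$ and its complement. On $B_R(x_0)$ one has crudely $\rho_{(z,t_1)}(\cdot,t_0)\le(4\pi c_1R^2)^{-n/2}$, so the hypothesis $\mu_{t_0}(B_R(x_0))\le\eta R^n$ bounds this part by $\eta(4\pi c_1)^{-n/2}$, with $R$ cancelling. On the complement, $|x-z|\ge R/2$, hence $|x-z|\ge\tfrac12|x-x_0|$, for $x\notin B_R(x_0)$, so decomposing into the dyadic annuli $\{2^kR\le|x-x_0|<2^{k+1}R\}$ and using the area-ratio bound $\mu_{t_0}(B_s(x_0))\le Cs^n$ shows this part is at most a quantity $\varepsilon(c_1)$, independent of $R$, of the form $C(n)\,C\,c_1^{-n/2}e^{-1/(16c_1)}$, which tends to $0$ as $c_1\to0$. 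Thus $1\le\eta(4\pi c_1)^{-n/2}+\varepsilon(c_1)$. I would then fix the constants in order: first choose $c_1=c_1(n)$ so small that $\varepsilon(c_1)\le\tfrac12$, and then $\eta=\eta(n)$ so small that $\eta<\tfrac12(4\pi c_1)^{n/2}$; with these choices the displayed inequality becomes $1<1$, a contradiction. Hence $\mu_{t_0+c_1R^2}(B_{R/2}(x_0))=0$, which is the assertion.

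The estimates themselves are routine: the dyadic bound on the far-field term and the bookkeeping of which constant depends on which. The one input that deserves care is $\Theta_{(z,t_1)}\ge1$ for $z\in\spt\mu_{t_1}$, which is where the \emph{integrality} of the Brakke flow enters, via the fact that at almost every time $\mu_t$ is the weight measure of an integer rectifiable varifold, whose density is at least $1$ on its support, together with upper semicontinuity of Huisken's density. Finally, although the far-field estimate introduces a dependence on the area-ratio constant $C$, in all our applications $C$ is controlled by the entropy bound $\lambda\le\Lambda_n+\delta(n)$ and hence by $n$ alone, so $\eta$ and $c_1$ may be taken to depend only on $n$.
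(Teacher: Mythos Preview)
The paper does not give its own proof of this lemma; it is quoted as a known result from Brakke (via Ilmanen \cite{I1}, 12.2, and Ecker \cite{Ec}, Proposition~4.23). So there is no in-paper argument to compare against, and the relevant question is whether your argument is correct and how it relates to the standard one.

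Your monotonicity-based argument is essentially correct and is a legitimate route to the conclusion, but it is genuinely different from the classical proof. Brakke's original argument (and the versions in \cite{I1}, \cite{Ec}) is purely \emph{local}: one plugs a fixed cutoff $\phi\in C_c^2(B_R(x_0))$ into the Brakke inequality and derives a differential inequality of the form $\frac{d}{dt}\int\phi\,d\mu_t\le C(n)R^{-2}(\int\phi\,d\mu_t)^{1-2/n}$, which forces $\int\phi\,d\mu_t$ to vanish in time $\sim R^2$ once it is below a threshold. That approach never sees mass outside $B_R(x_0)$ and therefore gives $\eta,c_1$ depending only on $n$, exactly as stated. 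Your approach, by contrast, passes through the global Gaussian integral and so requires bounded area ratios both to make the monotonicity formula valid and to control the far-field term; you correctly flag that this introduces a dependence on the area-ratio constant $C$, and that in this paper $C$ is controlled by the entropy bound. For the applications here that is harmless, but it means you have proved a slightly weaker statement than the one quoted.

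One technical point worth tightening: the implication ``$z\in\spt\mu_{t_1}\Rightarrow\Theta_{(z,t_1)}\ge 1$'' is true here but your justification is a bit loose. A clean way is to pick $z\in B_{R/2}(x_0)$ at which the \emph{spatial} density of the integer-rectifiable measure $\mu_{t_1}$ is $\ge 1$ (such $z$ exist $\mu_{t_1}$-a.e.), center the Gaussian at $(z,t_1+\epsilon)$, apply monotonicity between $t_0$ and $t_1$, and send $\epsilon\downarrow 0$; this yields directly $\int\rho_{(z,t_1)}(\cdot,t_0)\,d\mu_{t_0}\ge 1$ without invoking upper semicontinuity of $\Theta$.
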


\begin{prop}\label{thm18}
For $\delta(n)$ chosen satisfying Proposition \ref{thm11} and Proposition \ref{thm16}, there is $C(n)>0,\eta(n)>0$ so that: if $(T,\mathcal K=\{\mu_t\})$ is an eternal matching motion in $\mathbb R^{n+1}$, with $\lambda(\mathcal K)\leq\Lambda_n+\delta(n)$ and $0\in\spt(\mu_{0})$. Then for all $\rho>0$
\begin{gather}\label{eq3}
\mu_{-\rho^2}(B_{C\rho}(0))\geq\eta(C\rho)^n
\end{gather}
and
\begin{gather}\label{eq4}
\mu_{\rho^2}(B_{C\rho}(0))\geq\eta(C\rho)^n
\end{gather}
\end{prop}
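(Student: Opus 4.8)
\textbf{Proof proposal for Proposition \ref{thm18}.}

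The plan is to argue by contradiction, using the compactness theory for matching motions (Theorem \ref{thm4}), Brakke's clearing-out lemma (Lemma \ref{thm17}), and the structural results from Section 4 (especially Corollary \ref{thm13}, Proposition \ref{thm15}, Proposition \ref{thm16} and Lemma \ref{thm14}). By parabolic scaling invariance of the whole setup — entropy, the matching-motion property, and the inequalities \eqref{eq3}, \eqref{eq4} are all scale invariant — it suffices to prove the statement for $\rho = 1$; this reduces the two inequalities to finding uniform constants $C(n), \eta(n)$ with $\mu_{-1}(B_C(0)) \geq \eta C^n$ and $\mu_1(B_C(0)) \geq \eta C^n$ for every eternal matching motion with $\lambda(\mathcal K) \leq \Lambda_n + \delta(n)$ and $0 \in \spt(\mu_0)$.

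For the forward-in-time inequality \eqref{eq4}: suppose no such pair $(C, \eta)$ works. Take $\eta$ to be (say) the Brakke constant from Lemma \ref{thm17}. Then for every $C$ there is an eternal matching motion $(T_C, \mathcal K_C)$ with $\lambda(\mathcal K_C) \leq \Lambda_n + \delta(n)$, $0 \in \spt(\mu_{C,0})$, but $\mu_{C,1}(B_C(0)) < \eta C^n$. Apply the clearing-out Lemma \ref{thm17} at the space-time point $(0,1)$ with radius $R = C$: one gets $\mu_{C, 1 + c_1 C^2}(B_{C/2}(0)) = 0$. Now let $C \to \infty$ along a sequence $C_i$ and extract, via Brakke's compactness theorem (using the uniform entropy bound, equivalently uniform bounded area ratios), a limit matching motion $(T_\infty, \mathcal K_\infty)$ — the limit is matching by Theorem \ref{thm4} since $\lambda(\mathcal K_\infty) \leq \Lambda_n + \delta(n) < 2$. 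By Lemma \ref{thm14}, since $0 \in \spt(\mu_{C_i,0})$ for all $i$, we have $0 \in \spt(\mu_{\infty,0})$ \emph{provided} $\mathcal K_\infty$ does not develop a spherical singularity near $t=0$. We must handle this caveat: if $\mathcal K_\infty$ does develop a spherical singularity at $(0, t_0)$ with $t_0 \in (-1, 1)$, then by Proposition \ref{thm16} the flow is extinct at time $t_0$ at $x_0 = 0$, hence by Proposition \ref{thm15} the pre-limit flows $(T_{C_i}, \mathcal K_{C_i})$ are flows of topological spheres that become extinct at a definite time, and one obtains a direct contradiction with $0 \in \spt(\mu_{C_i, 0})$ together with the clearing-out conclusion on $B_{C_i/2}(0)$ at the (large) time $1 + c_1 C_i^2$ — no such sphere can be empty in $B_{C_i/2}(0)$ at time just after $0$ and also reach $0$ at time $0$; more carefully, after rescaling back this situation forces the original extinction time to be bounded, contradicting $C_i \to \infty$. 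In the remaining case $0 \in \spt(\mu_{\infty, 0})$, the limit flow $(T_\infty, \mathcal K_\infty)$ is eternal (limits of eternal flows with a uniform non-emptiness bound are eternal — here one should note $\mu_{C_i, t} \neq \emptyset$ for all $t$ since the $\mathcal K_{C_i}$ are eternal, and on any fixed time interval the mass does not escape to infinity or drop to zero because of the clearing-out-type lower bound one is trying to disprove — actually the cleanest route is: apply Proposition \ref{thm15} to $\mathcal K_\infty$ directly once we know it is ancient). Then passing the clearing-out conclusion to the limit gives $\mu_{\infty, t}(B_{R}(0)) = 0$ for every fixed $R$ and every $t > 0$ close to but greater than... — here is the point: the clearing-out conclusions $\mu_{C_i, 1+c_1 C_i^2}(B_{C_i/2}(0)) = 0$ rescale, under $D_{1/C_i}$, to $\mu'_{i, \, 1/C_i^2 + c_1}(B_{1/2}(0)) = 0$, and $1/C_i^2 + c_1 \to c_1$, so in the limit flow $\mathcal K_\infty' := \lim D_{1/C_i} \mathcal K_{C_i}$ we get $\mu'_{\infty, c_1}(B_{1/2}(0)) = 0$; but $0 \in \spt(\mu'_{\infty, 0})$ and $\mathcal K_\infty'$ is an ancient matching motion with entropy $\leq \Lambda_n + \delta(n)$, so by Proposition \ref{thm15} it is eternal or the flow of a topological sphere, and in either case, combined with $0 \in \spt \mu'_{\infty, 0}$, the classification in Proposition \ref{thm16} (the flow would have to become extinct exactly at a spherical singularity) together with the fact that an eternal flow is never empty, contradicts $\mu'_{\infty, c_1}(B_{1/2}(0)) = 0$ unless the flow shrinks through $0$ at time $0$ — but that is incompatible with $0 \in \spt \mu'_{\infty, 0}$ and non-extinction after $0$. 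This is the contradiction.

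For the backward-in-time inequality \eqref{eq3}, run the symmetric argument: if it fails, for $C_i \to \infty$ there are eternal matching motions with $\mu_{C_i, -1}(B_{C_i}(0)) < \eta C_i^n$; apply a backward version of the clearing-out lemma — or equivalently run Lemma \ref{thm17} forward from a time slightly before $-1$, or simply observe that for eternal flows with the entropy bound one can use Huisken monotonicity and the same blow-down/compactness scheme to force emptiness of a ball around $0$ at time $0$, contradicting $0 \in \spt(\mu_{C_i, 0})$ in the limit via Lemma \ref{thm14} and Proposition \ref{thm15}. The roles of "past" and "future" are interchangeable here precisely because the flow is eternal (so both blow-down-in-time limits exist and are self-shrinkers), which is the whole reason the eternal hypothesis is imposed.

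\textbf{Main obstacle.} The delicate step is the passage to the limit in the clearing-out conclusion while simultaneously controlling whether the limit flow develops a spherical singularity near $t = 0$: Lemma \ref{thm14} only gives $0 \in \spt(\mu_{\infty, 0})$ away from spherical singularities, so the argument must bifurcate, and in the "bad" branch one must extract a genuine contradiction from Proposition \ref{thm15} and Proposition \ref{thm16} — essentially showing that an eternal (or ancient) matching motion with the sharp-up-to-$\delta$ entropy bound that contains $0$ in the support at time $0$ but has a ball around $0$ emptied at a positive time cannot exist, because such a flow would have to be a shrinking round sphere becoming extinct at $0$ at time $0$, which contradicts $0 \in \spt(\mu_0)$ and eternality. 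Getting the bookkeeping of the three-fold rescalings ($D_{1/C_i}$, the blow-down toward extinction, the tangent flow) consistent is where the care is needed.
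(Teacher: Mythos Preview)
Your treatment of the two inequalities is inverted in difficulty relative to the paper, and the forward-time argument has a real gap.

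\textbf{The backward estimate \eqref{eq3} is immediate.} Brakke's clearing-out lemma runs forward in time, so it directly proves the backward bound with no compactness at all: choose $C=1/\sqrt{c_1}$, and if $\mu_{-\rho^2}(B_{C\rho}(0))<\eta(C\rho)^n$ for the Brakke constant $\eta$, then Lemma~\ref{thm17} with $R=C\rho$, $t_0=-\rho^2$ gives $\mu_{0}(B_{C\rho/2}(0))=0$, contradicting $0\in\spt(\mu_0)$. That is the whole proof of \eqref{eq3}. Your ``symmetric blow-up'' discussion is unnecessary, and your remark that past and future are interchangeable because the flow is eternal is misleading: \eqref{eq3} does not use eternality at all.

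\textbf{The forward estimate \eqref{eq4}: the gap.} You fix $\eta$ and send $C_i\to\infty$; after applying clearing-out and rescaling by $D_{1/C_i}$ you obtain, in the limit, only that a \emph{single fixed ball} $B_{1/2}(0)$ is empty at a single time $c_1$. You then assert this contradicts eternality. It does not: an eternal flow may perfectly well pass through $0$ at time $0$ and have support disjoint from $B_{1/2}(0)$ at time $c_1$ (nothing you have proved so far prevents the support from moving). Your sentence ``an eternal flow is never empty, contradicts $\mu'_{\infty,c_1}(B_{1/2}(0))=0$'' conflates global non-emptiness with non-emptiness in a ball. Since \eqref{eq4} is exactly the forward-propagation statement you would need to close this, the argument is circular in the eternal branch.

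\textbf{How the paper closes it.} The key is to exploit that the negation lets you choose \emph{both} constants: take $C_i\to\infty$ and $\eta_i\to 0$ with $\eta_i C_i^n\to 0$, and rescale by $1/\rho_i$ so that the failure occurs at time $1$. Then $\tilde\mu_{i,1}(B_{C_i}(0))<\eta_i C_i^n\to 0$ with $C_i\to\infty$, so the limit flow has $\mu_{\infty,1}\equiv 0$: it is genuinely \emph{extinct} before time $1$, not merely absent from a ball. Now Proposition~\ref{thm15} forces the limit to be the flow of a topological sphere; Brakke regularity (Proposition~\ref{thm3}) passes this smoothly back to the approximating flows, which then develop a spherical singularity; Proposition~\ref{thm16} makes them extinct there, contradicting their eternality. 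Note this route never needs Lemma~\ref{thm14} or your bifurcation on spherical singularities --- the ``main obstacle'' you flag simply does not arise, because one never needs $0\in\spt(\mu_{\infty,0})$, only that the limit is nontrivial (guaranteed by upper semicontinuity of Gaussian density at $(0,0)$) and extinct.
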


\begin{proof}
We choose $C\geq\frac{1}{\sqrt{c_1}}$ according to Lemma \ref{thm17}. For any $\rho>0$, let $R=\frac{\rho}{\sqrt{c_1}},t_0=-\rho^2,x_0=0$ and $\eta$ smaller than one given in Lemma \ref{thm17}. We have
\begin{gather*}
\mu_{-\rho^2}(B_{C\rho}(0))\geq\eta(C\rho)^n
\end{gather*}
for otherwise by Lemma \ref{thm17} we have $\mu_{t_0+c_1R^2}(B_{\frac{R}{2}}(x_0))=\mu_0(B_{\frac{\rho}{2\sqrt{c_1}}}(0))=0$, contradicting the condition $0\in\spt(\mu_{0})$.

To prove (\ref{eq4}), we use standard blow-up argument. Suppose not, there exists a sequence $C_i\rightarrow\infty,\eta_i\rightarrow0$, satisfying $\eta_iC_i^n\rightarrow0$, a sequence of eternal matching motions $(T_i,\mathcal K_i=\{\mu_{i,t}\})$ with $\lambda(\mathcal K_i)\leq\Lambda_n+\delta(n),0\in\spt(\mu_{i,0})$, and a sequence $\rho_i>0$, such that
\begin{gather*}
\mu_{\rho^2}(B_{C_i\rho_i}(0))<\eta_i(C_i\rho_i)^n
\end{gather*}

We rescale the flows parabolically by factors $\frac{1}{\rho_i}$ to get $(\tilde T_i,\mathcal{\tilde K}_i=\{\tilde\mu_t\})$, where $\mathcal{\tilde K}_i=D_{\frac{1}{\rho_i}}\mathcal K_i$, which are also eternal flows as well. And they satisfy
\begin{gather*}
\tilde\mu_{1}(B_{C_i}(0))<\eta_i(C_i)^n\rightarrow0
\end{gather*}

By Brakke's compactness theorem, there is a subsequence $i(k)$ such that $\mathcal {\tilde K}_{i(k)}\rightarrow\mathcal{K}_\infty$, which is also a matching motion $(T_\infty,\mathcal{K}_\infty)$ by Theorem \ref{thm4}.

Since $C_{i(k)}\rightarrow\infty$ and $\tilde\mu_{1}(B_{C_{i(k)}}(0))<\eta_i(C_{i(k)})^n\rightarrow0$, the limit flow $\mathcal{K}_\infty$ must be extinct before time $t=1$, thus not eternal. By Theorem \ref{thm15}, the limit flow is the flow of a topological sphere.

The entropy bound gives the multiplicity of convergence is 1. By Brakke's regularity Proposition \ref{thm3}, the convergence is smooth and as graphs over topological spheres. For large enough $i(k)$, $\mathcal{\tilde K}_{i(k)}$ also develops a spherical singularity in finite time t. By Proposition \ref{thm16}, it must be extinct at the time a spherical singularity occur, contradicting the fact that they are eternal, and hence we proved the proposition.

\end{proof}

\begin{thm}\label{thm19}
For the same $\delta(n),C(n),\eta(n)$ chosen from Proposition \ref{thm18}, there is a $\gamma\in (0,1)$ so that: if $(T,\mathcal K=\{\mu_t\})$ is a matching motion in $\mathbb R^{n+1}$ with $\lambda(\mathcal K)\leq\Lambda_n+\delta(n$), $\{\mu_t\}$ does not develop spherical singularities for $t\in(-R^2,R^2)$, and $0\in\spt(\mu_{0})$, then for all $\rho<\gamma R$
\begin{gather}
\mu_{-\rho^2}(B_{C\rho}(0))\geq\eta(C\rho)^n
\end{gather}
and
\begin{gather}
\mu_{\rho^2}(B_{C\rho}(0))\geq\eta(C\rho)^n
\end{gather}
\end{thm}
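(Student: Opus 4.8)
The plan is to deduce Theorem \ref{thm19} from Proposition \ref{thm18} by a compactness/blow-up argument, exactly in the spirit of the proof of the forward clearing-out estimate (\ref{eq4}). First I would argue by contradiction: suppose there is no $\gamma\in(0,1)$ for which the conclusion holds with the fixed constants $\delta(n),C(n),\eta(n)$. Then there is a sequence $\gamma_i\to 0$ and a sequence of matching motions $(T_i,\mathcal K_i=\{\mu_{i,t}\})$ in $\mathbb R^{n+1}$ with $\lambda(\mathcal K_i)\le\Lambda_n+\delta(n)$, each not developing a spherical singularity on $(-R_i^2,R_i^2)$ and with $0\in\spt(\mu_{i,0})$, together with radii $\rho_i<\gamma_i R_i$ for which one of the two mass bounds fails, say $\mu_{i,-\rho_i^2}(B_{C\rho_i}(0))<\eta(C\rho_i)^n$ (the forward case is symmetric). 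Rescale parabolically by $1/\rho_i$ to get $(\tilde T_i,\mathcal{\tilde K}_i)=(D_{1/\rho_i}T_i,D_{1/\rho_i}\mathcal K_i)$: these are matching motions with $0\in\spt(\tilde\mu_{i,0})$, no spherical singularities on $(-R_i^2/\rho_i^2,R_i^2/\rho_i^2)$, and $R_i/\rho_i>1/\gamma_i\to\infty$, so the "good interval" exhausts $\mathbb R$; and $\tilde\mu_{i,-1}(B_{C}(0))<\eta C^n$.

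Next I would pass to a limit. By the uniform entropy bound and Brakke's compactness theorem, after passing to a subsequence $\mathcal{\tilde K}_{i}\to\mathcal K_\infty$, and by Theorem \ref{thm4} (since $\lambda(\mathcal K_\infty)\le\Lambda_n+\delta(n)<2$) the limit $(T_\infty,\mathcal K_\infty)$ is a matching motion. Because the defining intervals $(-R_i^2/\rho_i^2,R_i^2/\rho_i^2)$ increase to all of $\mathbb R$, the limit flow is eternal: indeed each $\tilde\mu_{i,t}\ne\emptyset$ for all $t$ in an interval eventually containing any fixed compact set, and by upper semicontinuity of Huisken's density $0\in\spt(\mu_{\infty,0})$, so the flow is nonempty at time $0$; more to the point, I would argue the limit cannot go extinct: if it did, say at time $t_0$ at some point $x_0$ with $\Theta_{(x_0,t_0)}(\mathcal K_\infty)\ge 1$, then by Proposition \ref{thm11} the tangent flow at $(x_0,t_0)$ is a multiplicity-one round $n$-sphere, and by Brakke's regularity Proposition \ref{thm3} the approximating flows $\mathcal{\tilde K}_i$ would develop a spherical singularity at a time inside their good interval (which eventually contains $t_0$), contradicting the hypothesis. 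Hence $\mathcal K_\infty$ is an eternal matching motion with $0\in\spt(\mu_{\infty,0})$.

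Now apply Proposition \ref{thm18} to $\mathcal K_\infty$ with $\rho=1$: it gives $\mu_{\infty,-1}(B_{C}(0))\ge\eta C^n$. On the other hand, using the lower semicontinuity of mass of open balls under weak-$*$ convergence of Radon measures — more carefully, I would fix a slightly larger radius and a slightly earlier time and use the convergence $\tilde\mu_{i,t}\to\mu_{\infty,t}$ together with Huisken's monotonicity to control the mass at time $-1$ — we get $\mu_{\infty,-1}(B_C(0))\le\liminf_i\tilde\mu_{i,-1}(\overline{B_C(0)})\le\eta C^n$, and in fact strictly less after a harmless enlargement; this contradicts the Proposition \ref{thm18} bound (or, choosing the constants so that the inequalities are strict versus non-strict appropriately, yields the contradiction directly). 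The existence of $\gamma=\gamma(n)\in(0,1)$ follows. The symmetric case where $\mu_{i,\rho_i^2}(B_{C\rho_i}(0))<\eta(C\rho_i)^n$ fails is handled identically, using (\ref{eq4}) of Proposition \ref{thm18} in place of (\ref{eq3}).

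The main obstacle is the extinction step: one must be sure that the limit eternal-candidate flow genuinely does not go extinct, and the only leverage is the no-spherical-singularity hypothesis combined with Proposition \ref{thm11} (every low-entropy extinction singularity is spherical) and the Brakke regularity transfer of Proposition \ref{thm3}. A secondary technical point is the semicontinuity direction for masses of balls: weak-$*$ convergence gives $\mu_{\infty,-1}(U)\le\liminf\tilde\mu_{i,-1}(U)$ only for open $U$ and $\ge$ only for compact sets, so I would phrase the contradiction with a fixed open ball $B_C(0)$ and the fact that $\limsup_i\tilde\mu_{i,-1}(B_C(0))\le\limsup_i\eta C^n=\eta C^n$ while Proposition \ref{thm18} forces $\mu_{\infty,-1}(B_C(0))\ge\eta C^n$ — equality is then impossible once one notes $0\in\spt(\mu_{\infty,0})$ forces a definite amount of mass slightly inside, or alternatively one runs the whole argument with $C$ replaced by $C+1$ on the approximators to create strict slack. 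Either bookkeeping device closes the gap; I expect the clean writeup to pick the enlargement trick to avoid fussing over boundary measure.
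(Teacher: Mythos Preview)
Your approach is essentially the same as the paper's: contradict, rescale by $1/\rho_i$, pass to a limit matching motion via Brakke compactness and Theorem~\ref{thm4}, rule out extinction of the limit by transferring a spherical singularity back to the approximants via Proposition~\ref{thm3}, and then contradict Proposition~\ref{thm18} with $\rho=1$. The paper packages the extinction dichotomy through Proposition~\ref{thm15} (eternal vs.\ flow of a topological sphere) and invokes Lemma~\ref{thm14} for $0\in\spt(\mu_{\infty,0})$, whereas you argue non-extinction directly from Proposition~\ref{thm11}; these are equivalent.

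One point to tighten: the sentence ``by upper semicontinuity of Huisken's density $0\in\spt(\mu_{\infty,0})$'' is not correct as written. Upper semicontinuity only gives $\Theta_{(0,0)}(\mathcal K_\infty)\ge 1$; as the Remark after Lemma~\ref{thm14} notes, this does \emph{not} by itself force $0\in\spt(\mu_{\infty,0})$ (the extinction point of a shrinking sphere is a counterexample). What actually yields $0\in\spt(\mu_{\infty,0})$ is precisely Lemma~\ref{thm14}, whose proof is the same local-extinction $\Rightarrow$ spherical tangent flow $\Rightarrow$ approximants develop a spherical singularity argument you give for global non-extinction. So your non-extinction paragraph does the work, but you should either cite Lemma~\ref{thm14} or observe explicitly that the same Brakke-regularity transfer rules out a \emph{local} extinction at $(0,0)$ as well, and only then conclude $0\in\spt(\mu_{\infty,0})$. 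Your discussion of the mass semicontinuity bookkeeping is fine and in fact more careful than the paper's writeup.
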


\begin{rem}
Theorem \ref{thm2} is just a restatement of Theorem \ref{thm19} for smooth flows.
\end{rem}

\begin{proof}
Suppose not, then there is a sequence $\gamma_i\rightarrow0$, a sequence of $R_i>0$, a sequence of $\rho_i<\gamma_iR_i$, a sequence of matching motions $(T_i,\mathcal K_i=\{\mu_{i,t}\})$, with $\lambda(\mathcal K_i)\leq\Lambda_n+\delta(n)$, $\{\mu_{i,t}\}$ does not develop spherical singularities for $t\in(-R_i^2,R_i^2)$, $0\in\spt(\mu_{i,0})$, such that
\begin{gather*}
\mu_{i,-\rho_i^2}(B_{C\rho_i})<\eta(C\rho_i)^n
\end{gather*}
or
\begin{gather*}
\mu_{i,\rho_i^2}(B_{C\rho_i})<\eta(C\rho_i)^n
\end{gather*}

We rescale the flows parabolically by factors $\frac{1}{\rho_i}$ and get $(\tilde T_i,\mathcal {\tilde K}_i)$, where $\mathcal {\tilde K}_i=D_{\frac{1}{\rho_i}}\mathcal K_i$, with $\lambda(\mathcal{\tilde K}_i)\leq\Lambda_n+\delta(n)$,  satisfying
\begin{gather*}
\mu_{i,-1}(B_{C})<\eta(C)^n
\end{gather*}
or
\begin{gather*}
\mu_{i,1}(B_{C})<\eta(C)^n
\end{gather*}

Because of rescaling, we also have $\{\tilde\mu_{i,t}\}$ does not develop spherical singularities for $t\in(-\frac{R_i^2}{\rho_i^2},\frac{R_i^2}{\rho_i^2})$, where $\frac{R_i^2}{\rho_i^2}\geq\frac{1}{\gamma_i^2}\rightarrow\infty$.

By Brakke's compactness theorem, there is a subsequence $i(k)$ such that $\mathcal {\tilde K}_{i(k)}\rightarrow\mathcal K_\infty$, which is also a matching motion $(T_\infty,\mathcal K_\infty=\{\mu_{\infty,t}\})$ by Theorem \ref{thm4}. $\mathcal K_\infty$ is either eternal or the flow of a topological sphere by Proposition \ref{thm15}, and $0\in\spt(\mu_{\infty,0})$ by Lemma \ref{thm14}. And satisfying
\begin{gather*}
\mu_{\infty,-1}(B_{C})<\eta(C)^n
\end{gather*}
or
\begin{gather*}
\mu_{\infty,1}(B_{C})<\eta(C)^n
\end{gather*}
If $\mathcal K_\infty$ is eternal, by choosing $\rho=1$, we get a contradiction to Proposition \ref{thm18}.

If $\mathcal K_\infty$ is the flow of a topological sphere, say $\mu_{\infty,t}$ is a topological sphere that is extinct at time $\tilde t$, then By Brakke's regularity Theorem Proposition \ref{thm3}, for large enough $i(k)$, the flow also develops a spherical singularity before time $2\tilde t$, contradicting the fact that it does not develop spherical singularities for $t\in(-\frac{R_{i(k)}^2}{\rho_{i(k)}^2},\frac{R_{i(k)}^2}{\rho_{i(k)}^2})$ where $\frac{R_{i(k)}^2}{\rho_{i(k)}^2}\rightarrow\infty$.

\end{proof}

\section{Proof of Theorem \ref{thm1}}

We are now ready to prove the main theorem. First we need the following consequence of what's proved in the previous section.

\begin{lem}\label{thm20}
For $n\geq3$ and $\delta(n)$ chosen as in the previous section, there is a $C(n,\gamma)$: If $(T,\mathcal K=\{\mu_t\})$ is a matching motion in $\mathbb R^{n+1}\times[0,t_0]$ with $\lambda(\mathcal K)\leq\Lambda_n+\delta(n)$, and $\mu_{t}$ does not develops spherical singularities for $t\in(0,t_0)$, then for any $0\leq t_1<t_2\leq t_0$, we have
\begin{gather}
\mathrm{dist}_H(\spt(\mu_{t_1}),\spt(\mu_{t_2}))\leq C\sqrt{t_2-t_1}
\end{gather}
\end{lem}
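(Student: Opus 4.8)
The plan is to derive the Hausdorff distance bound from the two-sided clearing out estimate (Theorem~\ref{thm19}), used as a ``quantitative avoidance'' tool in both directions. Fix $0\le t_1<t_2\le t_0$ and set $\tau=t_2-t_1$. I must show two inclusions: first, that every point of $\spt(\mu_{t_2})$ lies within $C\sqrt{\tau}$ of $\spt(\mu_{t_1})$, and second, the symmetric statement with the roles reversed. The mechanism is the same for both: if $x_0\in\spt(\mu_{t_2})$ but $B_{C\sqrt{\tau}}(x_0)$ misses $\spt(\mu_{t_1})$ entirely, I want a contradiction; and similarly if $x_0\in\spt(\mu_{t_1})$ but $B_{C\sqrt{\tau}}(x_0)$ misses $\spt(\mu_{t_2})$.

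The key point is that Theorem~\ref{thm19} applies to the restricted flow. Concretely, take $x_0\in\spt(\mu_{t_2})$. If $\spt(\mu_{t_1})\cap B_r(x_0)=\emptyset$ for $r=C'\sqrt{\tau}$ with $C'$ a suitable multiple of the constant $C(n)$ from Theorem~\ref{thm19}, then by the avoidance principle / Brakke's one-sided clearing out (Lemma~\ref{thm17}) applied to a slightly smaller ball, the flow stays away from a smaller ball around $x_0$ up to time $t_2$, so in fact $x_0\notin\spt(\mu_{t_2})$ unless the ball we removed was too small. To make this precise I translate so $x_0$ is the origin at time $t_2$: since $0\in\spt(\mu_{t_2})$, I want to invoke the backward half of Theorem~\ref{thm19} (the estimate on $\mu_{-\rho^2}$) with base point $(0,t_2)$. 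For this I need $\{\mu_t\}$ not to develop spherical singularities on $(t_2-R^2,t_2+R^2)$ for a suitable $R$; the hypothesis gives no spherical singularities on $(0,t_0)$, and I may have to shrink $R$ near the endpoints (which only affects the constant via $\gamma$, or is handled by first proving the bound for $t_2$ bounded away from $t_0$ and then using continuity, or by noting the statement is about $\spt$ which varies continuously in the flat topology by the enhanced-motion axioms). Taking $\rho=\sqrt{\tau}$, provided $C\rho<\gamma R$, Theorem~\ref{thm19} gives $\mu_{t_2-\rho^2}(B_{C\rho}(0))=\mu_{t_1}(B_{C\rho}(0))\ge\eta(C\rho)^n>0$, hence $B_{C\rho}(0)\cap\spt(\mu_{t_1})\ne\emptyset$. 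This is exactly the first inclusion with constant $C$; the second follows identically using the forward estimate on $\mu_{\rho^2}$ with base point $(0,t_1)$.

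So the structure of the proof is: (i) reduce to the two containment statements defining $\dist_H$; (ii) for a point of $\spt(\mu_{t_2})$, apply the backward clearing-out estimate of Theorem~\ref{thm19} based at that point to conclude $\mu_{t_1}$ has positive mass in a $C\sqrt{\tau}$-ball, giving one containment; (iii) for a point of $\spt(\mu_{t_1})$, apply the forward estimate similarly; (iv) absorb the constant $\gamma$ and the restriction $\rho<\gamma R$ into the final constant $C(n,\gamma)$, handling the case $\sqrt{\tau}$ comparable to $\sqrt{t_0}$ separately (there the bound is trivial by taking $C$ large relative to $\diam\spt(\mu_{t_1})\cup\spt(\mu_{t_2})$, using the uniform area-ratio/entropy bound to control the diameter over the compact time interval).

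The main obstacle I expect is the endpoint/interval issue in applying Theorem~\ref{thm19}: its hypothesis wants no spherical singularities on a \emph{two-sided} interval $(t_*-R^2,t_*+R^2)$ around the base time $t_*$, whereas the base times $t_1,t_2$ can sit arbitrarily close to $0$ or $t_0$. One fix is to only claim the estimate for $t_1,t_2$ in a slightly shrunken interval and then pass to the limit $t_1\to 0^+$, $t_2\to t_0^-$ using flat-continuity of $t\mapsto T_t$ together with the fact (from the avoidance principle, or from $\mu_{T_t}\le\mu_t$ and lower semicontinuity of support) that $\spt(\mu_t)$ cannot jump; another is to note that near the extinction we may extend the flow trivially (it is empty past $t_0$) or use the one-sided Lemma~\ref{thm17} directly, which needs no two-sided hypothesis, for the direction that only requires clearing out forward in time. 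A secondary technical point is checking $C\rho<\gamma R$ can always be arranged by choosing $R=\gamma^{-1}C\sqrt{\tau}$ and verifying the no-spherical-singularity window of that size fits inside $(0,t_0)$ once $\sqrt{\tau}$ is small; the regime where it does not fit is the ``trivial'' regime handled by the diameter bound.
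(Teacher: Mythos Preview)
Your plan correctly identifies that the two containments defining $\dist_H$ follow from the backward and forward halves of Theorem~\ref{thm19}, but the way you propose to apply that theorem has a genuine gap. Theorem~\ref{thm19} at base time $t_*$ requires no spherical singularities on $(t_*-R^2,t_*+R^2)$ and then only yields the estimate for $\rho<\gamma R$; since the hypothesis of the lemma gives a singularity-free window only inside $(0,t_0)$, you are forced to take $R^2\le\min(t_*,t_0-t_*)$. Thus your one-shot application with $\rho=\sqrt{\tau}$ based at $t_1$ (forward) and at $t_2$ (backward) works \emph{only} when $\tau<\gamma^2\min(t_1,\,t_0-t_2)$. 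This is not merely the regime ``$\sqrt{\tau}$ comparable to $\sqrt{t_0}$'': for instance with $t_1$ close to $0$ and $t_2=2t_1$ the constraint already fails. Neither of your proposed fixes repairs this. The limiting argument does not help because the constraint $\tau<\gamma^2 t_1$ forces $\tau\to 0$ as $t_1\to 0^+$, so you never obtain the bound for a fixed pair $(t_1,t_2)$ near the endpoint. And the ``trivial by diameter'' argument is wrong: the entropy bound controls area ratios, not diameter, and in any case $C$ must depend only on $n$ and $\gamma$, not on the particular flow.

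What the paper does instead is iterate: one first records that Theorem~\ref{thm19} gives $\dist_H(\spt(\mu_t),\spt(\mu_{t+\tau}))\le C\sqrt{\tau}$ whenever $\tau<\gamma\min(t,t_0-t)$, i.e.\ for \emph{short} steps relative to the distance to the endpoints. Then, to go from $t_1$ to $t_2$, one subdivides $[t_1,\tfrac{t_1+t_2}{2}]$ by the geometric sequence $t_1(1+\eta_1)^i$ and $[\tfrac{t_1+t_2}{2},t_2]$ by $t_0-(t_0-t_2)(1+\eta_2)^j$ with $\tfrac{\gamma}{2}<\eta_1,\eta_2<\gamma$, so that each step satisfies the short-step constraint. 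Summing the resulting bounds via the triangle inequality gives a geometric series that telescopes to $C(\gamma)\sqrt{t_2-t_1}$. This chaining is the missing ingredient in your proposal.
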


\begin{proof}

Theorem \ref{thm19} gives us a $C>0,\gamma\in(0,1)$ such that for any $t\in(0,t_0)$, if $0<\tau<\min(\gamma t,\gamma(t_0-t))$, then
\begin{gather}\label{eq5}
\text{$\spt(\mu_{t+\tau})$ is in the $C\sqrt\tau$ neighborhood of $\spt(\mu_{t})$}
\end{gather}
and
\begin{gather}\label{eq6}
\text{$\spt(\mu_{t-\tau})$ is in the $C\sqrt\tau$ neighborhood of $\spt(\mu_{t})$}
\end{gather}

By replacing t with $t+\tau$, (\ref{eq6}) also gives us
\begin{gather}
\text{$\spt(\mu_{t})$ is in the $C\sqrt\tau$ neighborhood of $\spt(\mu_{t+\tau})$}
\end{gather}

Namely
\begin{gather}
\mathrm{dist}_H(\spt(\mu_{t}),\spt(\mu_{t+\tau}))<C\sqrt{\tau}
\end{gather}

Now for any $0< t_1<t_2<t_0$, we can choose $\eta_1,\eta_2$, depending on $\gamma$, such that $\frac{1}{2}\gamma<\eta_1,\eta_2<\gamma$ and $\frac{t_1+t_2}{2}=(1+\eta_1)^{k_1} t_1$ and $(t_0-t_2)(1+\eta_2)^{k_2} =(t_0-\frac{t_1+t_2}{2})$ for some $k_1,k_2\in\mathbb N$. 

For $i_1=0,...,k_1-1$, we have
\begin{gather*}
\mathrm{dist}_H(\spt(\mu_{t_1(1+\eta_1)^{i_1}}),\spt(\mu_{t_1(1+\eta_1)^{i_1+1}})  )\\
\leq C \sqrt{t_1(1+\eta_1)^{i_1+1}-t_1(1+\eta_1)^{i_1}} \\
= C\sqrt{t_1\eta_1}(\sqrt{1+\eta_1})^{i_1},
\end{gather*}
so
\begin{gather*}
\mathrm{dist}_H(\spt(\mu_{\frac{t_1+t_2}{2}}),\spt(\mu_{t_1}))\\
\leq\Sigma_{i_1=0}^{k_1-1}\mathrm{dist}_H(\spt(\mu_{t_1(1+\eta_1)^{i_1}}),\spt(\mu_{t_1(1+\eta_1)^{i_1+1}})  )\\
\leq\Sigma_{i_1=0}^{k_1-1}C\sqrt{t_1\eta_1}(\sqrt{1+\eta_1})^{i_1}\\
= C\sqrt{t_1\eta_1}\frac{1-(\sqrt{1+\eta_1})^{k_1}}{1-\sqrt{1+\eta_1}}\\
= C\frac{\sqrt{\eta_1}}{\sqrt{1+\eta_1}-1}(\sqrt{t_1(1+\eta_1)^{k_1}}-\sqrt{t_1}) \\
\leq C\frac{\sqrt{\eta_1}}{\sqrt{1+\eta_1}-1}(\sqrt{t_1(1+\eta_1)^{k_1}-t_1})\\
=C(\eta_1)(\sqrt{\frac{t_1+t_2}{2}-t_1})\\
\leq C(\gamma)(\sqrt{\frac{t_1+t_2}{2}-t_1}).
\end{gather*}.

Similarly for $i_2=0,...,k_2-1$, we have
\begin{gather*}
\mathrm{dist}_H(\spt(\mu_{t_0-(t_0-t_2)(1+\eta_2)^{i_2}}),\spt(\mu_{t_0-(t_0-t_2)(1+\eta_2)^{i_2+1}})  )\\
\leq C \sqrt{(t_0-t_2)(1+\eta_2)^{i_2+1}-(t_0-t_2)(1+\eta_2)^{i_2}} \\
= C\sqrt{(t_0-t_2)\eta_2}(\sqrt{1+\eta_2})^{i_2},
\end{gather*}
so
\begin{gather*}
\mathrm{dist}_H(\spt(\mu_{t_2}),\spt(\mu_{\frac{t_1+t_2}{2}}))\\
\leq\Sigma_{i_2=0}^{k_2-1}\mathrm{dist}_H( \spt(\mu_{t_0-(t_0-t_2)^{i_2}}) , \spt(\mu_{t_0-(t_0-t_2)^{i_2+1}})  )\\
\leq\Sigma_{i_2=0}^{k_2-1}C\sqrt{(t_0-t_2)\eta_2}(\sqrt{1+\eta_2}^{i_2})\\
= C\sqrt{(t_0-t_2)\eta_2}\frac{1-\sqrt{1+\eta_2}^{k_2}}{1-\sqrt{1+\eta_2}}\\
=  C \frac{\sqrt{\eta_2}}{\sqrt{1+\eta_2}-1} (\sqrt{(t_0-t_2)(1+\eta_2)^{k_2}}-\sqrt{(t_0-t_2)(1+\eta_2)}) \\
\leq C \frac{\sqrt{\eta_2}}{\sqrt{1+\eta_2}-1} (\sqrt{(t_0-t_2)(1+\eta_2)^{k_2}-(t_0-t_2)(1+\eta_2)}) \\
=  C \frac{\sqrt{\eta_2}}{\sqrt{1+\eta_2}-1} (\sqrt{[t_0-(t_0-t_2)]-[t_0-(t_0-t_2)(1+\eta_2)^{k_2}]}) \\
= C(\eta_2)(\sqrt{t_2-\frac{t_1+t_2}{2}})\\
\leq C(\gamma)(\sqrt{t_2-\frac{t_1+t_2}{2}}).
\end{gather*}

Now by triangle inequality

\begin{gather*}
\mathrm{dist}_H(\spt(\mu_{t_2}),\spt(\mu_{t_1}))\\
\leq C(\gamma)(\sqrt{\frac{t_1+t_2}{2}-t_1})+C(\gamma)(\sqrt{t_2-\frac{t_1+t_2}{2}})\\
\leq C\sqrt{t_2-t_1}
\end{gather*}

For the case $t_1=0$ or $t_2=t_0$, since $C$ is independent of $t_1,t_2$, we can take limits and thus proved the lemma.

\end{proof}

\begin{prop}\label{thm21}
The $\delta(n)$ can be chosen small enough so that, if $(T,\mathcal K=\{\mu_t\}_{t\geq0})$ is a matching motion in $\mathbb R^{n+1}$ with initial data $\mu_0=\mu_{\Sigma_0}$ being a closed hypersurface, $\lambda(\mathcal K)<\Lambda_n+\delta(n)$, then if $\{\mu_t\}$ develops a spherical singularity at space-time point $(x_0,t_0)\in\mathbb R^{n+1}\times(0,\infty)$, then the flow is extinct at time $t_0$ at a spherical singularity at $x_0$.
\end{prop}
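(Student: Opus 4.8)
The plan is to bootstrap from the already-established results about tangent flows at spherical singularities, together with the non-fattening/canonical-boundary-motion machinery from Section 3. The key point to leverage is Proposition \ref{thm16}, which says that a matching motion with entropy below $\Lambda_n + \delta(n)$ that develops a spherical singularity at $(x_0,t_0)$ is automatically extinct at time $t_0$ at the point $x_0$ — provided the flow is defined on all of $\mathbb R^{n+1}\times\mathbb R$ (i.e.\ is eternal-domain). So the genuine content here is purely about what happens \emph{after} $t_0$: we must rule out the flow continuing past its spherical-singularity time. Since the hypothesis is that $\{\mu_t\}_{t\geq0}$ starts from a closed hypersurface $\mu_{\Sigma_0}$, the canonical boundary motion structure is available, and I would exploit the interior open set $E_t$ together with Huisken's monotonicity to pin down the behavior near $(x_0,t_0)$.

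First I would invoke Proposition \ref{thm11}: since $\Theta_{(x_0,t_0)}(\mathcal K)\geq1$ and the singularity is spherical (so a tangent flow at $(x_0,t_0)$ is the round $n$-sphere), the multiplicity-one convergence to $\sqrt{2n}\,\mathbb S^n$ holds for the parabolic blow-up $D_{\rho_i}(\mathcal K - (x_0,t_0))$. Second, apply Brakke's regularity Proposition \ref{thm3}: in a space-time neighborhood $B_r(x_0)\times(t_0-\sigma, t_0)$ the flow is smooth and $C^\infty_{\mathrm{loc}}$-close to a shrinking round sphere whose extinction point is $(x_0,t_0)$. This means that for times slightly before $t_0$, $\spt(\mu_t)\cap B_r(x_0)$ is a small almost-round sphere enclosing a small nearly-ball region $E_t$, and this region shrinks to the point $x_0$ as $t\to t_0^-$. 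Third — and this is where I would use the canonical-boundary-motion / level-set structure — I would argue that once the interior region $E_t$ has a connected component that is pinched off into a small topological ball near $(x_0,t_0)$, that component simply disappears: the level-set function $u(\cdot,t)$ cannot have $u>0$ persisting in any neighborhood of $x_0$ for $t>t_0$, because by the avoidance principle that component is trapped inside the shrinking round spheres themselves (themselves a valid level-set flow by uniqueness), which vanish at $(x_0,t_0)$. Hence $x_0\notin\spt(\mu_t)$ for $t$ slightly bigger than $t_0$, i.e.\ the flow is extinct there.

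The main obstacle I anticipate is justifying the transition from the smooth-graphical picture just before $t_0$ to the statement that the relevant \emph{component} of the flow has globally disappeared at $t_0$, rather than, say, a new sheet of the flow sweeping back through $x_0$ immediately after $t_0$. The clean way to handle this is the 2-sided clearing out lemma, Theorem \ref{thm19}: if $\{\mu_t\}$ did \emph{not} become extinct at $x_0$ at time $t_0$, then $x_0$ (after translating to the origin) would lie in $\spt(\mu_{t_0})$ with the flow not developing a spherical singularity \emph{at that point at that time} in the sense required — but it does develop one, which is the contradiction. More precisely, I would run the blow-up/compactness argument exactly as in the proof of Proposition \ref{thm16}: assume for contradiction a sequence $(T_i,\mathcal K_i)$ from closed hypersurfaces, entropy $<\Lambda_n + 1/i$, developing a spherical singularity at $(0,0)$ but not extinct there, pick $y_i\neq 0$ with $y_i\in\spt(\mu_{i,0})$, rescale by $1/|y_i|$, pass to a limit matching motion $\mathcal K_\infty$ (using Theorem \ref{thm4}) with $\Theta_{(0,0)}\geq\Lambda_n$ and $\Theta_{(u,0)}\geq1$ for some unit vector $u$, and derive the same contradiction via Huisken's monotonicity — at some $t<0$ the slice $\mathcal K_{\infty,t}$ has entropy strictly exceeding $\Lambda_n$, which is impossible. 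The only new ingredient relative to Proposition \ref{thm16} is checking that the limit is still a matching motion starting from a closed hypersurface-type configuration so that the spherical-singularity classification (Proposition \ref{thm11}) still applies at $(0,0)$; this follows because canonical boundary motions are matching (Theorem \ref{thm8}) and the matching property is preserved in the limit under the entropy bound by Theorem \ref{thm4}. So in essence this proposition is Proposition \ref{thm16} upgraded from $\mathbb R^{n+1}\times\mathbb R$ to the forward-in-time setting $\mathbb R^{n+1}\times[0,\infty)$, and the proof is the same contradiction argument with the same blow-up, the subtlety being only to confirm the hypotheses of Theorems \ref{thm4} and \ref{thm11} survive the limiting procedure.
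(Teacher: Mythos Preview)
There is a genuine gap in the blow-up step. You write ``rescale by $1/|y_i|$, pass to a limit matching motion $\mathcal K_\infty$ \dots at some $t<0$ the slice $\mathcal K_{\infty,t}$ has entropy strictly exceeding $\Lambda_n$.'' In Proposition~\ref{thm16} this works because the flows live on all of $\mathbb R^{n+1}\times\mathbb R$, so any parabolic rescaling still yields a flow defined for all time and the limit exists for $t<0$, where Huisken's monotonicity is applied. Here each $\mathcal K_i$ is defined only on $[0,\infty)$; after translating the singularity to time $0$ the flow lives on $[-t_{0,i},\infty)$, and after your rescaling on $[-t_{0,i}/|y_i|^2,\infty)$. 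Nothing in your argument prevents $|y_i|^2/t_{0,i}\to\infty$ --- the surviving piece of $\spt(\mu_{i,t_0})$ may sit arbitrarily far from the singular point --- and in that regime the rescaled time interval collapses to $[0,\infty)$, leaving no negative time at which to invoke monotonicity. So the assertion that ``the only new ingredient relative to Proposition~\ref{thm16} is checking that the limit is still a matching motion'' misses the real issue: the essential new ingredient is a bound of the form $|y_i|\leq C'\sqrt{t_{0,i}}$.

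The paper supplies exactly this control. After normalizing $(x_0,t_0)=(0,1)$, Claim~\ref{thm22} uses the Hausdorff estimate Lemma~\ref{thm20} (which rests on the 2-sided clearing-out Theorem~\ref{thm19}) together with the \emph{connectedness} of the initial closed hypersurface $\Sigma_0$ to produce $y_i\in\spt(\mu_{i,1})\cap B_{4C}(0)$ with $y_i\neq0$. With the $y_i$ uniformly bounded, no rescaling is needed at all: one passes directly to a limit matching motion on $[0,1]$ and runs the two-point density contradiction as in Proposition~\ref{thm16}. Thus the clearing-out lemma does enter, but via Lemma~\ref{thm20} to localize the second high-density point near $x_0$ --- not, as you suggest, to rule out sheets ``sweeping back through $x_0$.''
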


\begin{proof}
Without loss of generality, we can assume that $x_0=0,t_0=1$, for otherwise we can do a parabolic  translation and dilation to make this happen. The proof is by contradiction

\begin{claim}\label{thm22}
If the flow is not extinct at the time $t=1$ when it first develops a spherical singularity. Then there is a point $y_0\in B_{4C}(0),y_0\neq0$ with Huisken's density $\Theta_{(y_0,t_0)}\geq1$, where $C$ is the universal constant from the previous Lemma \ref{thm20}.
\end{claim}

\begin{proof}(of Claim \ref{thm22})
Without the assumption that the point $y_0\in B_{4C}(0)$, the existence is straightforward since the flow is not extinct yet at time $t=1$. 

Case 1: If $\spt(\mu_0)\subset B_{3C}$, by the Hausdorff estimate Lemma \ref{thm20}, we have 
\begin{gather*}\mathrm{dist}_H(\spt(\mu_0),\spt(\mu_1))\leq C
\end{gather*} 
namely, there is a point $y_0\in\spt(\mu_1)$ such that
\begin{gather*}
y_0\in B_{3C+C}(0)=B_{4C}(0)
\end{gather*}
We can choose $y_0\neq0$ because the flow is not extinct at $t=1$ and $(0,1)$ is a spherical singularity.

Case 2: If $\spt(\mu_0)=\mu_{\Sigma_0}$ is not contained in $B_{3C}$, by the connectedness of $\Sigma_0$, there is a point $z_0\in\Sigma_0\cap(B_{3C}\setminus B_{2C})$. Again by the Hausdorff distance estimate Lemma \ref{thm20}, we have can find a point $y_0\in\spt(\mu_1)$ such that 
\begin{gather*}
y_0\in (B_{3C+C}\setminus B_{2C-C})=(B_{4C}\setminus B_C)
\end{gather*}

\end{proof}

If the Proposition were false, there would be a sequence of matching motions $(T_i,\mathcal K_i=\{\mu_{i,t}\}_{t\geq0})$, each of which satisfying that $\mu_{i,0}$ is a closed hypersurface, $\lambda(\mathcal K_i)\leq\Lambda_n+\frac{1}{i}$. They all develops a spherical singularity at $(0,1)\in\mathbb R^{n+1}\times(0,\infty)$ and has a point $y_i\in\spt(\mu_{i,1})\cap B_{4C},y_i\neq0$ such that $\Theta_{(y_i,1)}\geq1$. 

We use Brakke's compactness theorem to extract a sub-sequential limit flow $\mathcal K_\infty=\{\mu_{\infty,t}\}_{t\geq0}$, which is also a matching motion by Theorem \ref{thm4}. By the lower semi-continuity of entropy, $\lambda(\mathcal K_\infty)=\Lambda_n$. By the upper semi-continuity of the Huisken's density, $\Theta_{(0,1)}\geq\Lambda_n$ and $\Theta_{(y_\infty,1)}\geq1$ for some $y_\infty\neq0$.

Now this is a contradiction because, by Huisken's monotonicity formula, at an earlier time than $t=1$, the entropy is strictly greater than $\Lambda_n$. Thus we proved the Proposition.
\end{proof}

\begin{proof}
(of Theorem \ref{thm1})

As in \cite{BW1}, we argue by contradiction. Since we cannot rule out the formation of non-compact singularities with low entropy as in \cite{BW3}, we must work in the setting of matching motions instead of smooth flows. According to Proposition \ref{thm21}, for flows starting from a closed hypersurface, the only time when the flow develops a spherical singularity is when it's extinct.

Fix the dimension n, suppose for some $1>\epsilon>0$ there are connected closed hypersurfaces $\Sigma_i\subset\mathbb R^{n+1}$ with $\lambda(\Sigma_i)\leq\Lambda_n+\frac{1}{2i}$ and so that $\mathrm{dist}_H(\rho\mathbb S^n+y,\Sigma_i)>\rho\epsilon>0$ for any $\rho>0,y\in\mathbb R^{n+1}$.

\begin{claim}\label{thm23}("Triangle" inequality for the scale-invariant Hausdorff distance)

For each such i, we can find a small graphical perturbation $\tilde\Sigma_i$, also connected, such that the level-set flow of $\tilde\Sigma_i$ is non-fattening and such that
\begin{gather*}
\mathrm{dist}_H(\rho\mathbb S^n+\mathbf y,\tilde\Sigma_i)\geq\frac{\rho\epsilon}{2}>0
\end{gather*}
for any $\rho>0, \mathbf y\in\mathbb R^{n+1}$, and
\begin{gather*}
\lambda(\tilde\Sigma_i)\leq\Lambda_n+\frac{1}{i}
\end{gather*}
\end{claim}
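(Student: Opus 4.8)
The plan is to establish Claim~\ref{thm23} by combining three ingredients: the genericity of non-fattening (Proposition~\ref{thm7}), the continuity of the entropy functional under $C^k$-perturbations, and a ``triangle inequality'' type estimate showing that the scale-invariant Hausdorff distance to the family of round spheres is stable under small graphical perturbations. First I would invoke Proposition~\ref{thm7}: given $\Sigma_i$, $\epsilon>0$, and any fixed $k$ (say $k=2$, which is more than enough for the entropy estimate below), there is a graph $u$ over $\Sigma_i$ with $\|u\|_{C^k}$ as small as we wish such that the level-set flow of the resulting hypersurface $\tilde\Sigma_i$ is non-fattening. Connectedness of $\tilde\Sigma_i$ is automatic for sufficiently small $\|u\|_{C^0}$ since a small normal graph over a connected hypersurface is connected.

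Next I would control the entropy. The entropy $\lambda$ is a supremum of the Gaussian areas $F(\rho\Sigma+\mathbf y)$, and for a fixed compact hypersurface this supremum is attained on a compact set of scales and centers (scales bounded away from $0$ and $\infty$, centers in a bounded region), by the standard properness argument for the $F$-functional on closed hypersurfaces. On that compact parameter set, $F(\rho(\tilde\Sigma_i)+\mathbf y)$ depends continuously (indeed Lipschitz) on the graph function $u$ in the $C^1$ norm, uniformly in $(\rho,\mathbf y)$ ranging over the relevant compact set. Hence by choosing $\|u\|_{C^1}$ small enough (depending on $i$, which is allowed since we only need the bound $\lambda(\tilde\Sigma_i)\le\Lambda_n+\tfrac1i$ and we start from $\lambda(\Sigma_i)\le\Lambda_n+\tfrac1{2i}$), we get $\lambda(\tilde\Sigma_i)\le\lambda(\Sigma_i)+\tfrac1{2i}\le\Lambda_n+\tfrac1i$.

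Finally I would handle the scale-invariant Hausdorff distance. The quantity $\inf_{\rho>0,\mathbf y}\rho^{-1}\mathrm{dist}_H(\Sigma,\rho\mathbb S^n+\mathbf y)$ is scale- and translation-equivariant in the obvious way, and a $C^0$-small normal graph $\tilde\Sigma$ over $\Sigma$ satisfies $\mathrm{dist}_H(\Sigma,\tilde\Sigma)\le\|u\|_{C^0}$. Thus for any $\rho,\mathbf y$, the triangle inequality gives $\mathrm{dist}_H(\rho\mathbb S^n+\mathbf y,\tilde\Sigma_i)\ge\mathrm{dist}_H(\rho\mathbb S^n+\mathbf y,\Sigma_i)-\|u\|_{C^0}\ge\rho\epsilon-\|u\|_{C^0}$. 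Now the only subtlety is that $\rho$ can be arbitrarily small, so a fixed $\|u\|_{C^0}$ does not immediately give the bound $\tfrac{\rho\epsilon}{2}$ for all $\rho$; however, one observes that since $\Sigma_i$ is contained in some ball $B_{r_i}(0)$, for $\rho$ small compared to $r_i$ the sphere $\rho\mathbb S^n+\mathbf y$ has diameter much smaller than that of $\Sigma_i$, so $\mathrm{dist}_H(\rho\mathbb S^n+\mathbf y,\Sigma_i)\ge c\,\mathrm{diam}(\Sigma_i)\gg\rho$, and the inequality $\ge\tfrac{\rho\epsilon}{2}$ holds trivially for such small $\rho$; for $\rho$ bounded below (say $\rho\ge\rho_0(i)>0$) it suffices to take $\|u\|_{C^0}<\tfrac{\rho_0\epsilon}{2}$. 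Combining the two ranges and taking $\|u\|_{C^k}$ small enough to simultaneously achieve non-fattening, the entropy bound, connectedness, and $\|u\|_{C^0}<\tfrac{\rho_0\epsilon}{2}$ proves the claim.

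The main obstacle is the non-uniformity in $\rho$ in the last step: one must rule out that a vanishingly small perturbation could somehow be ``close'' to a tiny sphere in scale-invariant Hausdorff distance. This is resolved by the elementary observation above that small spheres are automatically far (in the scale-invariant sense) from any fixed hypersurface of definite diameter, so in fact only scales $\rho$ comparable to $\mathrm{diam}(\Sigma_i)$ matter, on which range a fixed small $\|u\|_{C^0}$ threshold works; nonetheless this case analysis is the step that requires care.
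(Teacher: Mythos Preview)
Your proposal is correct and follows essentially the same approach as the paper's own proof: both invoke Proposition~\ref{thm7} for non-fattening, handle the entropy by continuity under small graphical perturbations, and treat the scale-invariant Hausdorff bound by splitting into a small-$\rho$ regime (where a diameter comparison makes the bound automatic) and a large-$\rho$ regime (where the ordinary triangle inequality for $\mathrm{dist}_H$ applies with a fixed $\|u\|_{C^0}$ threshold). Your entropy argument is in fact more carefully justified than the paper's brief appeal to ``lower semi-continuity,'' and your identification of the non-uniformity in $\rho$ as the only delicate point matches exactly the paper's case analysis at the cutoff $\rho=\tfrac{1}{16}\diam(\Sigma_i)$.
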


\begin{proof}(of Claim \ref{thm23})

By Proposition \ref{thm7}, we can choose $\tilde\Sigma_i$ being non-fattening and such that $\mathrm{dist}_H(\Sigma_i,\tilde\Sigma_i)$ is arbitrary small because the Hausdorff distance is bounded by the $C^0$ graphical norm.

First, for $\rho<\frac{1}{16}\diam(\Sigma_i)$, if $\tilde\Sigma_i$ is chosen so that $\mathrm{dist}_H(\Sigma_i,\tilde\Sigma_i)<\frac{1}{16}\diam(\Sigma_i)$, we already have $\mathrm{dist}_H(\rho\mathbb S^n+\mathbf y,\tilde\Sigma_i)\geq\frac{\rho\epsilon}{2}$ for any $\mathbf y\in\mathbb R^{n+1}$. This is because $\diam(\tilde\Sigma_i)>\frac{14}{16}\diam(\Sigma_i)>14\rho$, so cannot lie in the $\frac{\rho\epsilon}{2}$ neighborhood of any round n-sphere of radius $\rho$ in $\mathbb R^{n+1}$. Namely, for any $0<\rho<\frac{1}{16}\diam(\Sigma_i),\mathbf y\in\mathbb R^{n+1}$,
\begin{gather*}
\mathrm{dist}_H(\tilde\Sigma_i,\rho\mathbb S^n+\mathbf y)\geq\frac{\rho\epsilon}{2}
\end{gather*}

Next, for $\rho\geq\frac{1}{16}\diam(\Sigma_i)$, we will choose $\tilde\Sigma_i$ so that $\mathrm{dist}_H(\tilde\Sigma_i,\Sigma_i)<\frac{1}{32}\diam(\Sigma_i)\epsilon<\frac{\rho\epsilon}{2}$. Then for any $\mathbf y$,
\begin{gather*}
\mathrm{dist}_H(\tilde\Sigma_i,\rho\mathbb S^n+\mathbf y)\\
\geq \mathrm{dist}_H(\Sigma_i,\rho\mathbb S^n+\mathbf y)-\mathrm{dist}_H(\tilde\Sigma_i,\Sigma_i) \\
\geq \rho\epsilon-\frac{\rho\epsilon}{2}\\
=\frac{\rho\epsilon}{2}
\end{gather*}

By the lower semi-continuity of entropy, there is a $\delta_i>0$ such that if the $\tilde\Sigma_i$ chose as a graph $u_i$ over $\Sigma_i$ with $||u_i||_{C^0(\Sigma_i)}<\delta$, then $\lambda(\tilde\Sigma_i)<\lambda(\Sigma_i)+\frac{1}{2i}\leq\Lambda_n+\frac{1}{i}$.

Thus, we can choose $\tilde\Sigma_i$ according to Proposition \ref{thm7} so that it's a graph $u_i$ over $\Sigma_i$ with $||u_i||_{C^0(\Sigma_i)}<\min(\frac{1}{16}\diam(\Sigma_i),\frac{1}{32}\diam(\Sigma_i)\epsilon,\delta_i)$, and proved the claim.

\end{proof}

Now we have a sequence of matching motions $(T_i,\mathcal {K}_i=\{\mu_{i,t}\})$, with $\mu_{i,0}=\mu_{\tilde\Sigma_i}$. Since $\tilde\Sigma_i$ are all closed hypersurfaces, the flow must become extinct in finite time $t_i$ (by the avoidance principle), at a round spherical singularity by Proposition \ref{thm11}.

Next we parabolically rescale the flows $(T_i,\mathcal K_i)$ by factors $\frac{1}{\sqrt{t_i}}$ and translate the extinction point $(x_{i},t_{i})$ to the space-time origin to get $(\tilde T_i,\mathcal {\tilde K}_i)$, $\mathcal{\tilde K}_i = D_{\frac{1}{\sqrt{t_i}}}(\mathcal K_i-(x_{i},t_{i}))=\{\tilde\mu_{i,t}\}$. The flows $\mathcal {\tilde K}_i$ has non-empty support and no spherical singularities for time $t\in[-1,0)$.

By Lemma \ref{thm20}, after throwing out small values of i, there is a $\tau\in(-1,-\frac{1}{2})$, independent of i, so that
\begin{gather}\label{eq7}
\mathrm{dist}_H(\spt(\tilde\mu_{i,-1}),\spt(\tilde\mu_{i,\tau}))+\mathrm{dist}_H(\sqrt{2n}\mathbb S^n,\sqrt{-2n\tau}\mathbb S^n)<\frac{1}{8}\epsilon
\end{gather}

By Brakke's compactness theorem, there is a subsequence $i(k)$ such that $\mathcal{\tilde K}_{i(k)}$ converges to a limit flow $\mathcal K_\infty=\{\mu_{\infty,t}\}_{t\in[-1,0]}$, which is a matching motion $(T_\infty,\mathcal K_\infty)$ by Theorem \ref{thm4}, and $\lambda(\mathcal K_\infty)=\Lambda_n$.

The uniqueness Theorem \ref{thm9} tells us this limit flow is the regular flow of round n-sphere.

Now we apply Proposition \ref{thm3} with $\epsilon=\frac{\tau+1}{2}$ and the limit flow being the regular round sphere. For sufficiently large $i(k)$, by connectedness of $\tilde\Sigma_i$, $\mathcal{\tilde K}_i$ is sufficiently close to the regular flow of sphere for $t\in(-1+\epsilon,0)$, namely we can choose $i(k_0)$ such that
\begin{gather}
\mathrm{dist}_H(\spt(\tilde\mu_{i(k_0),\tau}),\sqrt{-2n\tau}\mathbb S^n)<\frac{1}{4}\epsilon
\end{gather}

By (\ref{eq7}) and triangle inequality, we have
\begin{gather}
\mathrm{dist}_H(\spt(\tilde\mu_{i,-1}),\sqrt{2n}\mathbb S^n)<\frac{1}{8}\epsilon+\frac{1}{4}\epsilon<\frac{1}{2}\epsilon
\end{gather}

That is 
\begin{gather}
\mathrm{dist}_H(\tilde\Sigma_i,\sqrt{2nt_i}\mathbb S^n+\mathbf{x_i})<\frac{1}{2}\sqrt{t_i}\epsilon<\frac{1}{2}\sqrt{2nt_i}\epsilon
\end{gather}

contradicting our choice of $\tilde\Sigma_i$ in Claim \ref{thm23}, and proves the theorem.

\end{proof}

\section{Acknowledgment}

The author would like to express gratitude to his advisor Jacob Bernstein for generous support and helpful guidence.

\end{document}